\newcounter{listacnt}\renewcommand{\thelistacnt}{\alph{listacnt}}
\definecolor{lightblue}{rgb}{0.8,0.8,1}  
\newcommand{\cc}{\mathbb{C}}
\newcommand{\dd}{\mathbb{D}}
\newcommand{\nn}{\mathbb{N}}
\newcommand{\cF}{{\mathcal F}}
\newcommand{\cL}{{\mathcal L}}
\newcommand{\cP}{{\mathcal P}}
\newcommand{\cS}{{\mathcal S}}
\newcommand{\overbar}[1]{\mkern 1.5mu\overline{\mkern-1.5mu#1\mkern-1.5mu}\mkern 1.5mu}
\DeclarePairedDelimiter{\setof}{\{}{\}}  
\DeclarePairedDelimiter{\abs}{\lvert}{\rvert}  
\DeclarePairedDelimiter{\norm}{\lVert}{\rVert}
\DeclarePairedDelimiter{\dotp}{\langle}{\rangle}
\renewcommand{\Re}{\text{Re}}
\renewcommand{\Im}{\text{Im}}
\renewcommand{\forall}{\text{ for all }}
\theoremstyle{plain} 
\newtheorem{theorem}{Theorem} 
\newtheorem{corollary}[theorem]{Corollary}
\newtheorem{lemma}[theorem]{Lemma}
\newtheorem{proposition}[theorem]{Proposition}
\newtheorem{hyp}[theorem]{Hypothesis}
\theoremstyle{definition}
\newtheorem{definition}[theorem]{Definition}
\newtheorem{example}[theorem]{Example}
\theoremstyle{remark}
\newtheorem{remark}[theorem]{Remark}
\def\renewtheorem#1{%
	\expandafter\let\csname#1\endcsname\relax
	\expandafter\let\csname c@#1\endcsname\relax
	\gdef\renewtheorem@envname{#1}
	\renewtheorem@secpar
}
\def\renewtheorem@secpar{\@ifnextchar[{\renewtheorem@numberedlike}{\renewtheorem@nonumberedlike}}
\def\renewtheorem@numberedlike[#1]#2{\newtheorem{\renewtheorem@envname}[#1]{#2}}
\def\renewtheorem@nonumberedlike#1{  
	\def\renewtheorem@caption{#1}
	\edef\renewtheorem@nowithin{\noexpand\newtheorem{\renewtheorem@envname}{\renewtheorem@caption}}
	\renewtheorem@thirdpar
}
\def\renewtheorem@thirdpar{\@ifnextchar[{\renewtheorem@within}{\renewtheorem@nowithin}}
\def\renewtheorem@within[#1]{\renewtheorem@nowithin[#1]}
\def\corcommstyle{\bf\small\tt}
\def\corrl #1<<#2||#3>>{
\if\visiblecomments y
  \begin{quote} {\corcommstyle $<<$COMMENT$>>$ {\color{red}#1\marginpar{!!}}\\$<<$OLD$<<$} \end{quote}

{\color{red} 
 #2
 }

  \begin{quote} {\corcommstyle ==NEW== } \end{quote}
   \noindent\hrulefill
 
\vspace{-10pt} 
 
 \noindent\hrulefill
 
 \vspace{-10pt} 
 
 \noindent\dotfill
 
  #3
  
   \noindent\dotfill 

\vspace{-10pt} 
 
 \noindent\hrulefill
 
 \vspace{-10pt} 
 
 \noindent\hrulefill
  \begin{quote} {\corcommstyle $>>$END$>>$ } \end{quote}
 \else
  #3
 \fi
}
\long\def\longcorrl #1<<#2||#3>>{
\if\visiblecomments y
  \begin{quote} {\corcommstyle $<<$COMMENT$>>$ {\color{red}#1\marginpar{!!}}\\$<<$OLD$<<$} \end{quote}
 
 {\color{red}

  #2
  
  }
  
  \begin{quote} {\corcommstyle ==NEW== } \end{quote}
  
    \noindent\hrulefill
 
\vspace{-10pt} 
 
 \noindent\hrulefill
 
 \vspace{-10pt} 
 
 \noindent\dotfill
 
  #3
  
   \noindent\dotfill 

\vspace{-10pt} 
 
 \noindent\hrulefill
 
 \vspace{-10pt} 
 
 \noindent\hrulefill
  \begin{quote} {\corcommstyle $>>$END$>>$ } \end{quote}
 \else
  #3
 \fi
}
\def\mlabel #1
\def\flabel #1
\def\corrq #1<<#2>>{
\if\visiblecomments y
  \begin{quote} {\corcommstyle $<<$COMMENT$>>$ {\color{red}#1}\marginpar{!!}\\$<<$BEG$<<$} \end{quote}
  \noindent\hrulefill
 
\vspace{-10pt} 
 
 \noindent\hrulefill
 
 \vspace{-10pt} 
 
 \noindent\dotfill

  #2
 
  \noindent\dotfill 

\vspace{-10pt} 
 
 \noindent\hrulefill
 
 \vspace{-10pt} 
 
 \noindent\hrulefill 
  \begin{quote} {\corcommstyle $>>$END$>>$ } \end{quote}
 \else
  #2
 \fi
}
\long\def\longcorrq #1<<#2>>{
\if\visiblecomments y
  \begin{quote} {\corcommstyle $<<$COMMENT$>>$ #1\marginpar{!!}\\$<<$BEG$<<$} \end{quote}
  \noindent\hrulefill
 
\vspace{-10pt} 
 
 \noindent\hrulefill
 
 \vspace{-10pt} 
 
 \noindent\dotfill

  #2

  \noindent\dotfill 

\vspace{-10pt} 
 
 \noindent\hrulefill
 
 \vspace{-10pt} 
 
 \noindent\hrulefill 
  \begin{quote} {\corcommstyle $>>$END$>>$ } \end{quote}
 \else
  #2
 \fi
}
\def\corrc #1<<>>{
\if\visiblecomments y
  \begin{quote} {\corcommstyle $<<$COMMENT$>>$ \color{red} #1\marginpar{!!}} \end{quote}
\fi
}
\def\corre #1<<#2||#3>>{
\if\visiblecomments y
  #3\marginpar{\corcommstyle #1}
 \else
  #3
 \fi
}
\long\def\longcorre #1<<#2||#3>>{
\if\visiblecomments y
  #3\marginpar{\corcommstyle #1}
 \else
  #3
 \fi
}
\def\corrs #1<<#2||#3>>{
\if\visiblecomments y
  #3\marginpar{\corcommstyle #2 $\rightarrow$ #3\\ #1}
 \else
  #3
 \fi
}
\def\corro #1<<#2||#3>>{
#2}
\def\corrn #1<<#2||#3>>{
#3}
\long\def\longcorro #1<<#2||#3>>{
#2}
\long\def\longcorrn #1<<#2||#3>>{
#3}
\long\def\underconstruction #1<<<#2>>>{
\if\visiblecomments y
  \begin{quote} {\corcommstyle $<<$UNDER CONSTRUCTION - BEGIN$>>$ #1\marginpar{!!}} \end{quote}
  #2
  \begin{quote} {\corcommstyle $>>$UNDER CONSTRUCTION - END$>>$ } \end{quote}
 \else
 \fi
}
\def\showcomments{
  \let\visiblecomments y
}
\def\hidecomments{
  \let\visiblecomments n
}
\newcommand{\numder}{\overbar{A}^\dagger}
\newcommand{\numinv}{\overbar{A}}
\newcommand{\pode}{\mathcal{P}}
\newcommand{\pdde}{P}
\newcommand{\pps}{P_n}
\newcommand{\hoode}{\mathcal{H}}
\newcommand{\hodde}{H}
\newcommand{\hops}{H_n}
\newcommand{\zeroode}{\mathcal{F}}
\newcommand{\zerodde}{F}
\newcommand{\zerops}{F_n}
\newcommand{\linps}{\mathcal{A}_n}
\newcommand{\nonlinps}{\mathcal{G}_n}
\newcommand{\linode}{\mathcal{A}}
\newcommand{\nonlinode}{\mathcal{G}}
\newcommand{\blowupscalardde}{r}
\newcommand{\blowupscalarps}{r_n}
\newcommand{\blowupvectorps}{R_n}
\newcommand{\placen}{10}
\newcommand{\placeeps}{\Re(\lambda_n)M - \norm*{D}}
\newcommand{\placeN}{25}
\newcommand{\placeM}{1000}
\newcommand{\placealpha}{2}
\newcommand{\github}{\href{https://github.com/skepley/pseudospectral\_DDE\_CAP}{Github}}
\begin{document}
\title{Validated error bounds for pseudospectral approximation of delay differential equations: unstable manifolds}
\author[1]{Shane Kepley} 
\author[1]{Babette A.J. de Wolff}
\affil[1]{Department of Mathematics, Vrije Universiteit Amsterdam, Boelelaan 1111, 1081 HV Amsterdam, The Netherlands}
\date{}
\maketitle
\tableofcontents
\newpage 

\begin{abstract}
Pseudospectral approximation provides a means to approximate the dynamics of delay differential equations (DDE) by ordinary differential equations (ODE). This article develops a computer-aided algorithm to determine the distance between the unstable manifold of a DDE and the unstable manifold of the approximating pseudospectral ODE. The algorithm is based upon the parametrization method. While a-priori the parametrization method for a vector-valued ODE involves computing a sequence of vector-valued Taylor coefficients, we show that for the pseudospectral ODE, due to its specific structure, the problem reduces to finding a sequence of scalars, which significantly simplifies the problem. 
\end{abstract}

\section{Introduction}
  
Numerical bifurcation analysis is an indispensable tool in contemporary applied mathematics and applied sciences. 
For dynamical systems that are generated by delay differential equations (DDE), the software packages DDE-Biftool \cite{biftool} and KNUT \cite{knut} provide extensive
tools that are based upon direct analysis
of the infinite dimensional dynamical system that is generated by a DDE. 
In recent years, an alternative two-step approach to numerical bifurcation analysis of DDE has been advocated \cite{breda16}. The first step of this approach is to systematically approximate the dynamics of a DDE by a finite dimensional ordinary differential equations (ODE); next one numerically analyses the resulting ODE using existing ODE bifurcation software (such as Auto \cite{auto} or MatCont \cite{matcont}). 
In \cite{breda16, breda05}, the approach to deriving the `approximating ODE' is based upon polynomial interpolation and goes under the name pseudospectral approximation. 

Pseudospectral approximation as a method for numerical bifurcation analysis has had many applications over the past years, see e.g. \cite{Zhang22, Breda21}.  
In order to also provide an analytical
foundation to the approach, one should verify that the bifurcation behaviour of the DDE is well approximated by the bifurcation behaviour of the pseudospectral ODE. Results on convergence of eigenvalues \cite{breda05} and on the Hopf bifurcation \cite{dW21} provide the first steps in this direction. 
A natural next step 
is to investigate the approximation of invariant manifolds, i.e. to determine error bounds between the invariant manifolds of the original DDE and invariant manifolds of its pseudospectral approximation. In this article, we progress in this direction by providing a computer-assisted algorithm to determine the distance between an unstable manifold of a DDE and the unstable manifold of an approximating pseudospectral ODE. In the spirit of computer assisted proofs, we do \emph{not} provide an asymptotic statement (i.e. a statement that is valid when the discretisation index goes to infinity), but rather provide a constructive method to rigorously obtain an error bound for a concrete DDE with a fixed discretization index.  

\medskip

The use of computer assisted proofs in dynamical systems has seen a rapid development in recent years. 
Among many results (see e.g. the survey \cite{vdBerg15}), we highlight the proof of Wright's conjecture by computer-aided methods \cite{vdBerg18} as a recent and prominent case in the field of delay equations.  
In the general framework of computer assisted proofs in dynamical systems,
the starting point 
is to recast the dynamical problem under consideration (in our case, the problem of finding an unstable manifold in the equation of interest) as a zero-finding problem, i.e. to construct a map $F: X \to X$ on a Banach space $X$ such that the dynamical object of interest corresponds to a root of the equation
\begin{equation}\label{eq:zero_intro}
F(x) = 0. 
\end{equation}
Using numerical methods, one then computes an approximate solution of \eqref{eq:zero_intro}, i.e. an object $\hat{x} \in X$ such that $F(\hat{x})$ is small. 
Next one proves that there is an exact solution of \eqref{eq:zero_intro} in a neighbourhood of the approximate solution $\hat{x}$. This is typically done by constructing
an invertible operator $A: X \to X$ and a number $r > 0$ such that 
Newton-like function
\begin{equation} \label{eq:newton_intro}
T(x) = x - A F(x)
\end{equation}
is a contraction on the ball $B(\hat{x}, r)$. The contraction mapping principle then implies that the equation \eqref{eq:zero_intro} has an exact solution $x_\ast$ on $B(\hat{x}, r)$, which represents our dynamical object of interest. Since $\norm{\hat{x} - x_\ast} < r$, we also automatically obtain an error bound between the numerically computed solution and the exact solution. 

\medskip 

In this paper, we will recast the problem of finding a local unstable manifold (either of a DDE or of the approximating ODE) as a zero finding problem by means of the \emph{parametrization method} \cite{Cabre03, Cabre05}.  
As a first step, 
we write the local unstable manifold of the dynamical system under consideration as a graph of a map
\[
P(\sigma): \mathbb{C}^d \to Y,
\]
where $d \in \mathbb{N}$ is the unstable dimension of the equilibrium and $Y$ is the state space of the dynamical system; we then formally expand the map $P$ as  
\begin{equation} \label{eq:Taylor_intro}
P(\sigma) = \sum_{\beta \in \mathbb{N}^d} P_\beta \sigma^\beta.
\end{equation}
The parametrization method then provides a way to derive recursive formula's for the coefficients $P_\beta$, essentially by conjugating the dynamics on the unstable manifold to the dynamics on the state space (we will review this in detail in Section \ref{sec:pm_ode_dde}). 

For a DDE with physical dimension one 
and maximal delay $\tau > 0$, the state space is typically chosen to be $C([-\tau, 0], \mathbb{R})$, so in this case each coefficient $P_\beta$ in \eqref{eq:Taylor_intro} is 
a \emph{function} $P_\beta: [-\tau, 0] \to \mathbb{R}$.
In \cite{Groothedde17} the authors showed that the function $P_\beta$ always is of the form 
\begin{equation} \label{eq:P_intro}
P_\beta(\theta) = p_\beta e^{\dotp{\lambda, \beta} \theta}, 
\end{equation}
where $\lambda \in \mathbb{C}^d$ is a vector containing the unstable eigenvalues of the DDE. 
One can then derive recursive formulas for the \emph{scalar} coefficients $p_\beta$ by means of the parametrization method, and in this way construct a map
\begin{equation}
F: \mathcal{S}_d \to \mathcal{S}_d \label{eq:zero_dde_intro}
\end{equation}
on the sequence space
$\mathcal{S}_d = \{ (x_\beta)_{\beta \in \mathbb{N}^d} \mid x_\beta \in \mathbb{R} \}
$
so that if the sequence $p  = (p_\beta)_{\beta \in \mathbb{N}^d}$ is a zero of the map $F$, then the (function-valued) map
\[
P(\sigma) = \sum_{\beta \in \mathbb{N}^d} p_\beta e^{\dotp{\lambda, \beta} .} \sigma^\beta
\]
gives a local parametrization of the unstable manifold of the DDE. 

In this article, we show that for the pseudospectral approximation of a DDE, the situation is very similar. Given a discretization index $n \in \mathbb{N}$, the pseudospectral approximation to a DDE is an ODE with state space $\mathbb{R}^{n+1}$,  
and hence each of the coefficients $P_\beta$ in \eqref{eq:Taylor_intro} is a $n+1$ dimensional vector. 
We will show that, due to the very specific form of the pseudospectral ODE, the vectors $P_\beta$ are always of the form 
\[
P_\beta = p_\beta v_{\beta}. 
\]
Here each $v_\beta$ is an a-priori known vector, whose explicit expression only depends on the unstable eigenvalues of the pseudospectral ODE. We can next derive recursive formula's for the \emph{scalar} coefficients $p_\beta$, which leads to 
the following (informally formulated) statement:

\begin{theorem}[cf. Theorem \ref{prop:parametrization_ps} in Section \ref{sec:parametrization ps}] \label{thm:thm_intro}
For a fixed number $n \in \mathbb{N}$, suppose that the $n$-th order pseudospectral approximation of an ODE has unstable dimension $d$ and let $\mathcal{S}_d = \{ (x_\beta)_{\beta \in \mathbb{N}^d} \mid x_\beta \in \mathbb{C} \}$. Then there exists a map 
\begin{equation} \label{eq:zero_ode_intro}
F_n: \mathcal{S}_d \to \mathcal{S}_d
\end{equation}
and a sequence of vectors 
\[
\{ v_\beta \mid \beta \in \mathbb{N}^d \}
\]
so that if $p  = (p_\beta)_{\beta \in \mathbb{N}^d}$ is a sequence that satisfies
\[
F_n(p) = 0 
\]
then the map 
\[
P(\sigma) = \sum_{\beta \in \mathbb{N}^d} p_\beta v_\beta \sigma^\beta
\]
is a parametrization of the local unstable manifold of the pseudospectral ODE. 
\end{theorem}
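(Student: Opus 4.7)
The plan is to exploit the very specific algebraic structure of the pseudospectral vector field $f_n$. In the standard pseudospectral construction, $f_n$ takes the form $f_n(y) = A_n y + e_\star\, g(y)$, where $A_n \in \mathbb{C}^{(n+1)\times(n+1)}$ is the linear part coming from polynomial differentiation at the interior collocation nodes, $e_\star$ is the single standard basis vector corresponding to the ``boundary'' collocation node where the DDE is evaluated, and $g$ is a scalar-valued nonlinear function. This rank-one structure of the nonlinearity is what enables the reduction from vector-valued Taylor coefficients to scalar ones.

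I would first write down the invariance equation of the parametrization method, $DP(\sigma)\Lambda\sigma = f_n(P(\sigma))$, with $P(0)=0$ and $DP(0)e_j = v_{e_j}$ an eigenvector of $A_n$ for the unstable eigenvalue $\lambda_j$, and expand both sides as formal power series in $\sigma$. Matching order-$\beta$ coefficients yields, for $|\beta|\geq 2$, the linear system
\[
\bigl(\langle\lambda,\beta\rangle I - A_n\bigr) P_\beta = R_\beta,
\]
where $R_\beta$ is a polynomial in $\{P_\gamma : |\gamma| < |\beta|\}$ determined by the Taylor coefficients of $f_n$. The crucial observation is that $R_\beta = \rho_\beta\, e_\star$ for some scalar $\rho_\beta$, since all higher-order contributions of $f_n$ lie in the one-dimensional subspace spanned by $e_\star$. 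Assuming the non-resonance condition $\langle\lambda,\beta\rangle \notin \operatorname{spec}(A_n)$ for all $|\beta|\geq 2$, I would then define the a-priori vectors
\[
v_\beta := \bigl(\langle\lambda,\beta\rangle I - A_n\bigr)^{-1} e_\star, \qquad |\beta|\geq 2,
\]
together with the previously fixed eigenvectors $v_{e_j}$ for $|\beta|=1$. These vectors depend only on $\lambda$ and $A_n$, so the whole family $\{v_\beta\}$ is known before any scalar unknown is computed.

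Setting $p_\beta := \rho_\beta$, the linear system becomes $P_\beta = p_\beta v_\beta$, and an induction on $|\beta|$ confirms this ansatz at every order. Substituting $P_\gamma = p_\gamma v_\gamma$ back into the expression for $\rho_\beta$ converts it into an explicit convolution-type polynomial in $\{p_\gamma : |\gamma| < |\beta|\}$, whose coefficients involve the known boundary entries $(v_\gamma)_\star$ and the Taylor coefficients of $g$. Collecting these scalar relations into $F_n:\mathcal{S}_d \to \mathcal{S}_d$ defined by $(F_n(p))_\beta := p_\beta - \rho_\beta(p)$ (with appropriate normalization conditions at $|\beta|\leq 1$) yields the desired map. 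I expect the main obstacle to be the bookkeeping in this last step: writing $\rho_\beta$ as an explicit convolution polynomial in the lower-order $p_\gamma$'s requires carefully unpacking how $g(P(\sigma))$ expands in power series under the ansatz, and verifying that the resulting formula defines a well-posed map on $\mathcal{S}_d$. The non-resonance assumption must also be verified explicitly; it is generic but depends on the specific equilibrium and discretization index.
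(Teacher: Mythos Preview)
Your proposal is correct and rests on the same key idea as the paper: the nonlinearity of the pseudospectral ODE lives in a one-dimensional subspace (your $e_\star$), so the right-hand side of every homological equation is a scalar multiple of a fixed vector, and hence so is each $P_\beta$. The paper organizes the reduction slightly differently. Instead of inverting the full matrix $\langle\lambda,\beta\rangle I - A_n$, it splits coordinates as $(y_0,y)\in\mathbb{R}\times\mathbb{R}^n$ and uses the block structure: the last $n$ homological equations read $(D-\langle\lambda,\beta\rangle I)\,b_\beta = a_\beta D\mathbf{1}$, giving $b_\beta = a_\beta (D-\langle\lambda,\beta\rangle I)^{-1}D\mathbf{1}$, and the scalar unknown is taken to be the \emph{first coordinate} $a_\beta$ of $P_\beta$ rather than your right-hand-side scalar $\rho_\beta$. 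Substituting into the remaining equation produces $\Delta_n(\langle\lambda,\beta\rangle)\,p_\beta = H_n(p)_\beta$, where $\Delta_n$ is the pseudospectral characteristic function from Lemma~\ref{lem:spectrum ps}. The two normalizations are related by $p_\beta^{\text{paper}} = \rho_\beta/\Delta_n(\langle\lambda,\beta\rangle)$, so they define equivalent zero-finding problems. What the paper's choice buys is an explicit form for $v_\beta$ and for $F_n$ that mirrors the DDE map $F$ of Proposition~\ref{prop:pm dde} term by term, which is precisely what makes the distance estimate in Section~\ref{sec:CAP} work; the price is the extra hypothesis $\langle\lambda,\beta\rangle\notin\sigma(D)$, which your direct inversion of $\langle\lambda,\beta\rangle I - A_n$ does not require.
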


Since the maps $F$ (cf. \eqref{eq:zero_dde_intro}) and $F_n$ (cf. \eqref{eq:zero_ode_intro}) are defined on the same state space, we can make sense of the distance between their respective zeroes and interpret this as the `distance' between the unstable manifold of the DDE and the unstable manifold of the approximating ODE. 
We subsequently can determine this distance (for a concrete instance of a DDE and a fixed discretization index $n \in \mathbb{N}$) by means of the following computer-assisted strategy:

\begin{enumerate}
\item 
Numerically construct an `approximate zero' of the map $\zerops: \mathcal{S}_d \to \mathbb{S}_d$, i.e. construct a sequence $\hat{x} \in \mathcal{S}$ such that $\norm{\zerops (\hat{x})}$ is small
\item 
Construct a positive number $r_{\text{\tt PSA}} > 0$ such that the map $\zerops$ has an exact zero $x_{\text{\tt PSA}}$ on the ball $B(\hat{x}, r_{\text{\tt PSA}})$. 
\item 
Construct a positive number $r_{\text{\tt DDE}} > 0$ such that the map $\zerodde$ has an exact zero $x_{\text{\tt DDE}}$ on the ball $B(\hat{x}, r_{\text{\tt DDE}})$. 
\item The triangle inequality now implies that
\begin{equation} \label{eq:triangle}
\norm{x_{\text{\tt PSA}} - x_{\text{\tt DDE}}} < r_{\text{\tt PSA}} + r_{\text{\tt DDE}}.
\end{equation}
\end{enumerate}

In practice, we construct the constants $r_{\text{\tt PSA}}$ (resp. $r_{\text{\tt DDE}}$) by constructing two Newton-like maps of the form \eqref{eq:newton_intro} and by then showing that the map is a contraction on the ball $B(\hat{x}, r_{\text{\tt PSA}})$ (resp. $B(\hat{x}, r_{\text{\tt PSA}})$). We point out that this involves constructing two Newton-like maps (one for the ODE and one for the DDE), but only constructing one initial guess $\hat{x}$, since we will use the same initial guess for both maps. 

In Section \ref{sec:CAP}, we will perform the above strategy explicitly for Wright's equation
\begin{equation} \label{eq:wright}
\dot{x}(t) = - \alpha x(t-1)(1+x(t))
\end{equation}
with parameter $\alpha  = 2$ and with discretisation index $n  = \placen$. This yields the following result: 

\begin{theorem} \label{thm:distance_intro}
Consider Wright's equation \eqref{eq:wright} with $\alpha = 2$. 
Then the map $\zerodde$ (cf. \eqref{eq:zero_dde_intro}) has a zero $x_{\text{\tt DDE}}$ and the map $\zerops$ (cf. \eqref{eq:zero_ode_intro}) has zero $x_{\text{\tt PSA}}$ that satisfy 
\[
\norm{p_{\text{\tt DDE}} - p_{\text{\tt PSA}} } \leq 1.956812219647396 \times 10^{-9}. 
\]
\end{theorem}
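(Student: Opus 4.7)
The plan is to carry out the four-step computer-assisted strategy outlined in the introduction, applied to Wright's equation \eqref{eq:wright} with $\alpha = \placealpha$ and pseudospectral discretization index $n = \placen$. Since $\alpha > \pi/2$, the trivial equilibrium of \eqref{eq:wright} has a single pair of unstable complex-conjugate eigenvalues, so the unstable manifold is two-dimensional (equivalently, one complex-dimensional under the conjugate-symmetry reduction), and both $\zerodde$ and $\zerops$ act on the same space $\mathcal{S}_d$.

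First, I would compute a common numerical approximate zero $\hat{x}$ by running Newton's method on a truncation of $\zerops$ up to multi-index order $|\beta|\leq N$ for some fixed $N$. Because the scalar coefficients $p_\beta$ of the parametrization decay geometrically (after appropriate scaling of the eigenvector representatives), the same $\hat{x}$ serves as a reasonable initial guess for \emph{both} $\zerops$ and $\zerodde$. Using a single center is essential, since the triangle inequality \eqref{eq:triangle} requires expanding both balls around the same point.

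The core of the proof then consists of verifying, in rigorous interval arithmetic, that the Newton-like maps $T_n(x) = x - A_n \zerops(x)$ and $T(x) = x - A \zerodde(x)$ are contractions on balls $B(\hat{x}, r_{\text{\tt PSA}})$ and $B(\hat{x}, r_{\text{\tt DDE}})$ respectively. For each map I would follow the standard radii polynomial approach: construct non-negative bounds $Y_0, Z_0, Z_1, Z_2(r)$ satisfying
\[
\norm*{T(\hat{x}) - \hat{x}} \leq Y_0, \quad \norm*{I - A A^\dagger} \leq Z_0, \quad \norm*{A\bigl(D\zerodde(\hat{x}) - A^\dagger\bigr)} \leq Z_1,
\]
and $\sup_{x \in B(\hat{x},r)} \norm*{A\bigl(D\zerodde(x) - D\zerodde(\hat{x})\bigr)} \leq Z_2(r)$ (and analogously for $\zerops$, with $A_n$, $A_n^\dagger$), then search for $r>0$ satisfying the radii polynomial inequality $Y_0 + (Z_0 + Z_1)r + Z_2(r) r^2 < r$. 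Any admissible $r$ serves as $r_{\text{\tt PSA}}$ or $r_{\text{\tt DDE}}$, and by \eqref{eq:triangle} their sum bounds $\norm{p_{\text{\tt DDE}} - p_{\text{\tt PSA}}}$ from above, with a target value on the order of $10^{-9}$.

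The main obstacle will be the tail estimates for $\zerodde$. Unlike $\zerops$, whose coefficients live in a space where the eigen-structure collapses the problem to scalar recursions with finite-dimensional couplings, the DDE map involves Cauchy convolutions weighted by the exponential factors $e^{\dotp*{\lambda,\beta}\theta}$ and has genuinely infinite-dimensional support. The operator $A$ must accordingly be constructed to agree with a numerical inverse of $D\zerodde(\hat{x})$ on a finite block and with an explicit diagonal (or near-diagonal) tail, in a weighted $\ell^1$-type sequence space chosen so that the exponential weights can be absorbed. Balancing the truncation order $N$, the decay weight, and the choice of $A$ so that all four bounds close the radii polynomial at the stated precision is the delicate numerical step. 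Once the bounds are realized in interval arithmetic on the companion \github{} repository, the constant $1.956812219647396 \times 10^{-9}$ emerges as $r_{\text{\tt PSA}} + r_{\text{\tt DDE}}$.
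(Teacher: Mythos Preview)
Your proposal is correct and follows essentially the same route as the paper: a common approximate zero $\hat{x}$, two radii-polynomial verifications (one for $\zerops$, one for $\zerodde$), and the triangle inequality. A few minor deviations are worth flagging. The paper obtains $\hat{x}$ by direct recursion on the homological equations rather than Newton iteration, and works in the plain $\ell^1$ space rather than a weighted one; more notably, your assessment of where the difficulty lies is inverted. The DDE tail is in fact the \emph{easier} case, since the scalar weights satisfy $\abs{e^{-\dotp{\lambda,\beta}}}\leq 1$ for $\Re\lambda>0$, giving $\norm{\blowupscalardde}\leq 1$ immediately. The genuinely delicate tail analysis is for the pseudospectral map, where the analogous weights $((D-\dotp{\lambda_n,\beta}I)^{-1}D\mathbf{1})_n$ require resolvent estimates on the differentiation matrix $D$ (Lemmas~\ref{lem:resolvent_estimate_ode} and~\ref{lem:Delta_n_inverse} in the paper) to control uniformly in $\beta$.
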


\subsection{Outline of the paper}

Sections \ref{sec:ps_review} and \ref{sec:pm_ode_dde} review the concepts that the rest of the paper builds on, with Section \ref{sec:ps_review} reviewing the pseudospectral method for DDE and Section \ref{sec:pm_ode_dde} reviewing the parametrization method for both ODE and DDE. Section \ref{sec:parametrization ps} then discusses the parametrization method in the specific case of the pseudospectral ODE, leading to (a more formal formulation of) Theorem \ref{thm:thm_intro}. Section \ref{sec:CAP} discusses the computer-assisted approach to determining the distance between the unstable manifold of a DDE and the unstable manifold of its pseudospectral approximation, by means of the example of Wright's equation.

\section{Pseudospectral approximation of DDE}
\label{sec:ps_review}
In this section, we review the pseudospectral method as introduced in \cite{breda05, breda16}; parts of this section are based upon the exposition in \cite{dW21}. For notational simplicity, we restrict our attention to scalar-valued DDE, which we write as 
\begin{subequations}
\begin{equation} \label{eq:DDE}
\dot{x}(t) = Lx_t + G(x_t). 
\end{equation}
Here $L: C([-\tau, 0], \mathbb{R}) \to \mathbb{R}$ is a bounded linear operator and $G: C([-\tau, 0], \mathbb{R}) \to \mathbb{R}$ is a functional that satisfies $G(0) = G'(0) = 0$. The \emph{history segment} $x_t \in C([-\tau, 0], \mathbb{R})$ is defined as $x_t(\theta)  =x(t+\theta)$; hence, it satisfies the transport equation
\begin{equation} \label{eq:transport}
\partial_t x(t+\theta) = \partial_\theta x(t+\theta).
\end{equation}
\end{subequations}
The main idea of pseudospectral approximation the dynamics of \eqref{eq:DDE}--\eqref{eq:transport} with an ODE by approximating elements of the function space $C\left([-1, 0], \mathbb{R}\right)$ with polynomials. 

To do so, we fix a discretization index $n \in \mathbb{N}$ and choose $n+1$ meshpoints
\[ -1 \leq \theta_n < \ldots < \theta_0 = 0 \]
on the interval $[-1, 0]$. Various choices of this mesh are possible; through the rest of this article we use (a rescaled version of) the Chebyshev extremal nodes:
\begin{equation} \label{eq:chebyshev}
\theta_j = \frac{1}{2} \left( \cos \left(\frac{j \pi}{n} \right) -1 \right), \qquad 0 \leq j \leq n.
\end{equation}
Next, we consider the \emph{Lagrange polynomials}
\[ \ell_j(\theta) = \Pi_{\substack{0 \leq m \leq n \\ m \neq j}} \frac{\theta - \theta_m}{\theta_j - \theta_m}, \qquad 0 \leq j \leq n \]
that have the property that
\begin{subequations}
\begin{equation}
\begin{aligned}
\ell_j(\theta_i) = \delta_{ij} = \begin{cases}
0 \qquad \mbox{if } i \neq j; \\
1 \qquad \mbox{if } i = j.
\end{cases}
\end{aligned}
\end{equation}
\end{subequations}
We next approximate the history segment $x_t$ as 
\begin{equation*} 
x(t+\theta) \thicksim \sum_{j = 0}^n y_j(t) \ell_j(\theta),
\end{equation*}
so that in the approximation we seperate the time variable $t$ from the bookkeeping variable $\theta$. 
We require the analogue 
of \eqref{eq:DDE} to hold for $\theta_0 = 0$: 
\begin{subequations}
\begin{equation} \label{eq:def ode 1}
\dot{y}_0(t) = L \left(\sum_{j = 0}^n y_j(t) \ell_j\right) + G \left(\sum_{j = 0}^n y_j(t) \ell_j \right)
\end{equation}
and the analogue of \eqref{eq:transport} to hold for $\theta_1, \ldots, \theta_n$: 
\begin{equation} \label{eq:def ode 2}
 \dot{y}_k(t) = \sum_{j = 0}^n y_j(t) \ell_j'(\theta_k), \qquad k = 1, \ldots, n.
\end{equation}
\end{subequations}
The equations \eqref{eq:def ode 1}--\eqref{eq:def ode 2} together define a $n+1$-dimensional ODE, which we call the pseudospectral approximation of the DDE \eqref{eq:DDE}. 

In the following, we slightly simplify the expressions for the system \eqref{eq:def ode 1}--\eqref{eq:def ode 2}. It holds that 
\[ \sum_{j = 0}^n \ell_j(\theta) \equiv 1\]
since the function on the left hand side is a polynomial of degree $n$ that is equal to $1$ at $n+1$ meshpoints $\theta_0, \ldots, \theta_n$. From there it follows that 
\[ \ell_0'(\theta) = - \sum_{j = 1}^n \ell_j'(\theta), \qquad \theta \in [-1, 0] \]
and with this we can rewrite \eqref{eq:def ode 2} as
\[ \dot{y}_k(t) = - y_0(t) \left(\sum_{j = 1}^n \ell_j'(\theta_k) \right) + \sum_{j =1 }^n \ell_j'(\theta_k) y_j(t), \qquad k = 1, \ldots, n. \]
This system of equations we then more succinctly write as 
\begin{equation} \label{eq:ps_short_1}
\dot{y}(t) = -y_0 (t) D \textbf{1} - D y(t)
\end{equation}
where 
\begin{equation*}
y(t) = \begin{pmatrix}
y_1(t) \\ \vdots \\ y_n(t)
\end{pmatrix} \in \mathbb{R}^n
\end{equation*}
and 
\begin{equation} \label{eq:def D 1}
D = \begin{pmatrix}
\ell_1'(\theta_1) & \ldots & \ell_n'(\theta_1) \\
\vdots & \ddots & \vdots  \\
\ell_1'(\theta_n) & \ldots & \ell_n'(\theta_n)
\end{pmatrix}, \qquad \textbf{1} = \begin{pmatrix}
1 \\ 
\vdots \\
1
\end{pmatrix} \in \mathbb{R}^n
\end{equation}
(both the matrix $D$ and the vector $\textbf{1}$ depend on the discretization index $n$, but we suppress this in notation). 
Next we define the operator 
\begin{equation} \label{eq:def P}
\begin{aligned}
P: \mathbb{R} \times \mathbb{R}^n \to C\left([-1, 0], \mathbb{R}\right) \\
P(y_0, y) = y_0 \ell_0 + \sum_{j = 1}^n y_j \ell_j
\end{aligned}
\end{equation}
so that we can rewrite \eqref{eq:def ode 1} as 
\begin{equation} \label{eq:ps_short_2}
\dot{y}_0(t) = LP(y_0, y) + G(P(y_0, y)). 
\end{equation}
So in this notation, the pseudospectral approximation is given by the equations \eqref{eq:ps_short_1} and \eqref{eq:ps_short_2}, as we summarize in the following definition: 

\begin{definition}
The pseudospectral approximation of the DDE 
\[ \dot{x}(t) = Lx_t + G(x_t) \]
is given by the $n+1$ dimensional system of ODE 
\begin{equation}\label{eq:ps ode}
\begin{pmatrix}
\dot{y}_0 \\
\dot{y}
\end{pmatrix} = \begin{pmatrix}
LP(y_0, y) + G(P(y_0, y)) \\
- y_0 D \textbf{1} + D y
\end{pmatrix}
\end{equation}
where $y_0 \in \mathbb{R}$, $y = (y_1, \ldots, y_n)^T \in \mathbb{R}^n$, $D \in \mathbb{R}^{n \times n}$ and $\textbf{1} \in \mathbb{R}^n$ are defined in \eqref{eq:def D 1} and the operator $P$ is defined in \eqref{eq:def P}. 
\end{definition}

\begin{example}[Wright's equation] \label{ex:wright}
Wright's equation
\begin{equation} \label{eq:wright}
\dot{x}(t) = - \alpha x(t-1)(1+x(t)),
\end{equation}
with $x(t) \in \mathbb{R}$ and parameter $\alpha \in \mathbb{R}$, is of the form \eqref{eq:DDE} with $L, G: C([-1, 0], \mathbb{R}) \to \mathbb{R}$ given by 
\begin{equation} \label{eq:L G wright}
Lx_t = - \alpha x(t-1)  \qquad \mbox{and} \qquad G(x_t) = - \alpha x(t-1) x(t). 
\end{equation}
For the Chebyshev nodes \eqref{eq:chebyshev}, the node $\theta_n$ is placed at $\theta_n = -1$; therefore, for any vector $(y_0, y) \in \mathbb{R} \times \mathbb{R}^n$ the interpolating polynomial $P(y_0, y)$ takes the value $y_n$ at $\theta_n = -1$. Hence it holds that 
\[ LP(y_0, y) = - \alpha y^{(n)}, \qquad G(P(y_0, y)) = - \alpha y^{(n)} y_0 \]
and the pseudospectral approximation to Wright's equation is given by 
\[ 
\begin{pmatrix}
\dot{y}_0 \\ \dot{y}
\end{pmatrix}
= \begin{pmatrix}
-\alpha y^{(n)} (1+y_0) \\
D y - y_0 D \textbf{1}
\end{pmatrix}
\]
\end{example}

\section{Parametrization method for ODE and DDE} \label{sec:pm_ode_dde}
In this section, we review the parametrization methods for ODE and DDE. 
Both for ODE and DDE, we consider an equilibrium of some dynamical problem which has $d$ unstable eigenvalues. The idea of the parametrization method is that we view its local unstable manifold as a graph of a function, and then recursively determine the Taylor coefficients of this function. We first review the necessary sequence spaces in Subsection \ref{sec:sequences}; we then review the parametrization method for ODE in Subsection \ref{sec:parametrization ode} and the parametrization method for DDE in Subsection \ref{sec:parametrization dde}. 

\subsection{Sequence spaces and analytic functions} \label{sec:sequences}
If $g: \dd \to \cc$ is an analytic function on the disk 
\[
\dd = \setof*{z \in \cc : \abs*{z} < 1},
\]
then it admits a unique series expansion of the form 
\[
g(z) = \sum_{\beta = 0}^\infty a_\beta z^\beta. 
\]
We can uniquely identify $g$ with the coefficient sequence $\setof*{a_\beta}_{\beta \in \nn}$, which we view as an element of the space of complex scalar sequences
\[
\cS := \setof{\setof{u_\beta}_{\beta \in \nn} \mid u_\beta \in \cc}.
\]
Similarly, an analytic function $g: \dd^d \to \cc$  on the polydisk 
\[
\dd^d := \setof*{z \in \cc^d : \abs*{z_j} < 1, \ 1 \leq j \leq d}
\]
has a $d$-variate Taylor expansion and therefore is uniquely identified with a $d$-infinite sequence in the space
\[
  \cS_d := \underbrace{\cS \otimes \dots \otimes \cS}_{d \ \text{copies}} = \setof*{\setof*{u_\beta}_{\beta \in \nn^d} \mid u_\beta \in \cc}. 
\]
Finally, if $g: \dd^d \to \cc^n$ is a \emph{vector-valued} analytic function, 
then it has a Taylor expansion of the form 
\[
    g(z) = \sum_{\beta \in \nn^d} a_\beta z^\beta, \qquad a_\beta \in \cc^n.
\]
which we an uniquely identify with a sequence in the space
\[
\cS_d^n = \underbrace{\cS_d \times \dots \times \cS_d}_{n \ \text{copies}} = \setof*{(u_1, \dotsc, u_n) : u_j \in \cS_d, \ 1 \leq j \leq n}.
\]
The space $\cS_d^n$ is isomorphic (as a vector space) to the space of $d$-infinite complex \emph{vector sequences}
\[
\cS_d^n \cong \setof*{\setof*{u_\beta}_{\beta \in \nn^d} \mid u_\beta \in \cc^n}
\]
which is a convenient space to represent $g$.  

In the rest of Section \ref{sec:pm_ode_dde} and in Section \ref{sec:parametrization ps}, we discuss Taylor sequences formally,
without regard to convergence; in this case, we focus only on  
the vector space structure of $\cS^n_d$. In Section \ref{sec:CAP}, we will consider convergence of the Taylor series of scalar-valued functions $g: \mathbb{D}^d \to \mathbb{C}$; to do so, 
we will equip $\cS_d$ with a norm to measures decay of the coefficients. 
For a scalar sequence $u = \setof*{u_\beta} \in \cS_d$ we will use the $\ell^1$ norm defined by 
\begin{equation}
\label{eq:ell1_norm}
\norm{u}_{\ell^1} := \sum_{\beta \in \nn^d} \abs{u_\beta}.
\end{equation}
In particular, the set
\[
    \ell^1 := \setof*{u \in \cS_d : \norm{u}_{\ell^1} < \infty} \subset \cS_d
\]
is a Banach space with respect to this norm.

\medskip

When computing the Taylor coefficients of function numerically, we can of course only hope to compute finitely many coefficients. To account for this in notation, 
we define for fixed $N \in \mathbb{N}$ the projection operator $\pi_N$ on $X = \cS_d^n$ as
\[
(\pi_N (u))_\beta = 
\begin{cases}
u_\beta &\qquad \mbox{if } \abs{\beta} \leq N \\
0 &\qquad \mbox{if } \abs{\beta} > N.
\end{cases}
\]
and define the projection operator $\pi_\infty$ as
\[
(\pi_\infty (u))_\beta = 
\begin{cases}
0 &\qquad \mbox{if } \abs{\beta} \leq N \\
u_\beta &\qquad \mbox{if } \abs{\beta} > N
\end{cases}
\]
so that $\pi_N + \pi_\infty$ is equal to the identity operator on $X$. We let $X_N$ denote the image of the projection $\pi_N$, i.e.
\[
X_N := \{ u \in \mathcal{S}_d^n \mid u_\beta = 0 \quad \mbox{for} \quad \abs{\beta} \geq N \}; 
\]
and we let $X_\infty$ denote the image of the projection $\pi_\infty$ so that $X = X_N \oplus X_\infty$. If now $u  = \{u_d \}_{\abs{d} \leq N}$ is a finite sequence (for example, a finite sequence that we have computed numerically), then we can naturally identify it with a sequence in $X_N$, by `extending' the definition of $u$ via $u_\beta = 0$ for $\abs{\beta} > N$. 
Hence in the following, we will not always distinguish between a finite sequence and a sequence in $X_N$. 

For an operator $T: X \to X$, we denote by 
\[
\norm{T}_{\ell^1} := \sup_{\substack{u \in X \\ \norm*{u}_{\ell^{1}} = 1}} \{ \norm*{T u}_{\ell^{1}} \}
\]
the operator norm of $T$. Hence the notation $\norm{.}_{\ell^{1}}$ can either refer to the $\ell^{1}$ norm on $X$ or the induced operator norm but the distinction will be clear from the context.

\subsection{Parametrization method for ODE} \label{sec:parametrization ode}
We next review the parametrization method for ODE as introduced in \cite{Cabre03, Cabre05}. We consider the ODE 
\begin{equation} \label{eq:ode}
\dot{x}(t) = \linode x(t) + \nonlinode(x(t))
\end{equation}
with $x(t) \in \mathbb{R}^n$ and $\linode \in \mathbb{R}^{n \times n}$ a matrix. We moreover assume that the smooth function $\nonlinode: \mathbb{R}^n \to \mathbb{R}^n$ satisfies $\nonlinode(0) = 0$, $\nonlinode'(0) = 0$, so that the ODE \eqref{eq:ode} has an equilibrium $x \equiv 0$. The  linearization of \eqref{eq:ode} around this equilibrium is given by
\[ \dot{x}(t) = \linode x(t); \]
if the matrix $\linode$ has $d$ eigenvalues 
\[ \lambda_1, \ldots, \lambda_d \]
in the right half of the complex plane, then the equilibrium $x \equiv 0$ of \eqref{eq:ode} has a $d$-dimensional unstable manifold. The local unstable manifold (i.e. the unstable manifold in a neighbourhoud of the origin) is a graph of a map
\[ \pode: \dd^d \to \mathbb{R}^n;\]
and we call the map $\pode$ a \emph{parametrization} of the local unstable manifold. Although the local unstable manifold is unique, its parametrization is not. 
The idea of the parametrization method is to recursively compute the Taylor coefficients of a particularly convenient parametrization $\pode$. 

To explain the parametrization method for ODE in the simplest context, we make the additional assumption that the eigenvalues $\lambda_1, \ldots, \lambda_d$ are non-resonant, i.e. 
\[ \beta_1 \lambda_1 + \ldots + \beta_d \lambda_d \not \in \sigma(\linode) \]
for all $\beta \in \nn^d$ with $\abs*{\beta} \geq 2$. In this case, a particularly convenient parametrization $\pode$ is the parametrization that sends solutions of the ODE
\begin{equation} \label{eq:flow mfd}
\dot{y}(t) = \Lambda y(t) + h(y(t))
\end{equation}
to solutions of the ODE \eqref{eq:ode}. Here $\Lambda$ is a diagonal matrix given by
\[ \Lambda = \begin{pmatrix}
\lambda_1 & 0 & \ldots & 0 \\
0 & \lambda_2 & \ldots & 0 \\
0 & 0 & \ldots & \lambda_d
\end{pmatrix} \]
and the function $h$ is a so-called ``flat'' function, i.e. it has the property that 
\[ D^{(\alpha)}h(0) = 0 \]
for all $\alpha \in \mathbb{N}^d$. The parametrization $\pode$ sends solutions of \eqref{eq:flow mfd} to solutions of \eqref{eq:ode} if and only if 
\begin{equation} \label{eq:conjugacy}
\linode \pode (\sigma) + \nonlinode (\pode(\sigma)) = 
\pode '(\sigma) \Lambda \sigma + \pode '(\sigma) h(\sigma).
\end{equation}
If we substitute the Ansatz 
\[ \pode (\sigma) = \sum_{\substack{\beta \in \mathbb{N}^d \\ \left| \beta \right| \leq k}} \pode_\beta \sigma^\beta + \mathcal{O}(\sigma^{k+1}); \]
into \eqref{eq:conjugacy}, we obtain
\begin{equation} \label{eq:conjugacy ode full}
\sum_{\substack{\beta \in \mathbb{N}^d \\ \left| \beta \right| \leq k}} \linode \pode_\beta \sigma^\beta + \nonlinode\left(\sum_{\substack{\beta \in \mathbb{N}^d \\ \left| \beta \right| \leq k}} \pode_\beta \sigma^\beta \right) + \mathcal{O}(\sigma^{k+1}) = \sum_{\substack{\beta \in \mathbb{N}^d \\ \left| \beta \right| \leq k}} \dotp{\lambda, \beta} \pode_\beta \sigma^\beta + \mathcal{O}(\sigma^{k+1}),
\end{equation}
where $\dotp{\lambda, \beta} = \lambda_1 \beta_1 + \ldots \lambda_d \beta_d$ (in particular, the flat term $h$ `disappears' from the equation if we consider the equation up to finite order in its Taylor expansion). Since the map $\nonlinode$ is smooth  and $\nonlinode(0)$ and $D \nonlinode (0) = 0$, we can write  
\begin{equation} \label{eq:g taylor}
\nonlinode\left(\sum_{\substack{\beta \in \mathbb{N}^d \\ \left| \beta \right| \leq k}} \pode_\beta \sigma^\beta \right) = \sum_{\substack{\beta \in \mathbb{N}^d \\ \left| \beta \right| \leq k}} \tilde{\pode}_\beta \sigma^\beta + \mathcal{O}(\sigma^{k+1}),
\end{equation}
where every coefficient $\tilde{\pode}_\beta$ can be expressed in terms of the coefficients $\pode_{\beta'}$ with $\left| \beta' \right| < \left| \beta \right|$. If we now substitute \eqref{eq:g taylor} in \eqref{eq:conjugacy ode full} and equate terms of order $\beta$ for $|\beta| \geq 2$, we obtain
\begin{subequations}
\begin{equation} \label{eq:hom eqn ode}
\left(\dotp{\lambda, \beta}I - \linode \right)\pode_\beta = \tilde{\pode}_\beta, \qquad |\beta| \geq 2. 
\end{equation}
We supplement this with the conditions
\begin{equation} \label{eq:order zero one ode}
\pode_{0} = 0 \qquad \mbox{and} \qquad \pode_{\beta} = \xi_\beta, \, \mbox{for } | \beta| = 1,
\end{equation}
\end{subequations}
where for each vector $\beta$ with $|\beta| = 1$, the vector $\xi_\beta$ is an eigenvector
$\linode$ with eigenvalue $\lambda_\beta$. Hence the condition \eqref{eq:order zero one ode} ensures that the graph of $\pode$ is attached to the origin and tangent to the unstable eigenspace of $\linode$. Equations \eqref{eq:order zero one ode}--\eqref{eq:hom eqn ode} give us a method to compute the coefficients $\pode_\beta, \ \beta \in \mathbb{N}^d$ recursively: if for a given $\beta$, we have computed the coefficients $\{ \pode_{\beta'} \}_{\left| \beta' \right| < \left| \beta \right|}$, we can compute $\tilde{\pode}_\beta$ and then solve for $\pode_\beta$ from \eqref{eq:hom eqn ode}.

\medskip

In the above discussion, we first invoked an abstract existence result for unstable manifolds, and then argued that the Taylor coefficients of the map $\pode$ can be computed recursively. But in specific cases, it is sometimes also possible to argue the other way around. If for a concrete ODE, there exists a sequence of coefficients $\{\pode_\beta\}_{\beta \in \mathbb{N}^d}$ that satisfy \eqref{eq:order zero one ode} and \eqref{eq:hom eqn ode} and additionally the sequence
\begin{equation} \label{eq:analytic ode}
\sigma \mapsto \sum_{\beta \in \mathbb{N}^d} \pode_\beta \sigma^\beta
\end{equation}
is real analytic on some open ball $B(0, r) \subseteq \mathbb{C}^d$, then locally the unstable manifold is given by the graph of \eqref{eq:analytic ode}. As a consequence, the local unstable manifold is then also analytic. 
In practice, we can check that the function \eqref{eq:analytic ode} is analytic by checking that 
\[ \sum_{\beta \in \mathbb{N}^d} \left| \pode_\beta \right| < \infty. \]

\medskip

We summarize the above discussion in the next hypothesis and proposition, where we denote by $\mathcal{S}_d^n$ the sequence space introduced in Subsection \ref{sec:sequences}.

\begin{hyp} \label{hyp:ODE}
We make the following assumptions on the ODE \eqref{eq:ode}: 
\begin{enumerate}
\item The map $\nonlinode: \mathbb{R}^n \to \mathbb{R}^n$ is smooth (i.e. it is $C^k$ for every $k \in \mathbb{N}$) and satisfies $\nonlinode (0) = 0, \ D\nonlinode (0) = 0$. 
\item The matrix $\linode$ has $d$ roots $\lambda_1, \ldots, \lambda_d$ in the right half of the complex plane; and these roots satisfy the non-resonance condition 
\[ \beta_1 \lambda_1 + \ldots + \beta_d \lambda_d \not \in \sigma(\linode) \]
for all multi-indices $\beta = (\beta_1, \ldots, \beta_d)$ with $\left| \beta \right| := |\beta_1| + \ldots + |\beta_d| \geq 2$.
\end{enumerate}
\end{hyp}

\begin{proposition}[cf. \cite{Cabre03}] \label{prop:pm ode}
Consider the ODE \eqref{eq:ode} satisfying Hypothesis \ref{hyp:ODE}. For every multi-index $\beta \in \mathbb{N}^d$ with $\abs{\beta} = 1$, let $\xi_\beta$ be an eigenvector of $\linode$ with eigenvalue $\lambda_\beta$. Denote by 
\[ \begin{aligned}
\hoode: \mathcal{S}_d^n &\to \mathcal{S}_d^n \\
\end{aligned}
 \]
the map with the property that 
\[ \nonlinode \left(\sum_{\substack{\beta \in \mathbb{N}^d \\ | \beta | \leq k }} x_\beta \sigma^\beta \right) = \sum_{\substack{\beta \in \mathbb{N}^d \\ | \beta | \leq k }} \hoode (x)_{\beta} \sigma^\beta + \mathcal{O}(\sigma^{k+1}) \]
for every sequence $x = (x_\beta)_{\beta \in \mathbb{N}^d}$. Moreover, define the map $\zeroode: \mathcal{S}_d^n \to \mathcal{S}_d^n$ as 
\begin{align*}
 \zeroode(x)_\beta = \begin{cases}
0 \qquad &\mbox{if } \beta = 0 \\
x_\beta - \xi_\beta \qquad &\mbox{if } \abs{\beta} = 1, \\
\left(\dotp{\lambda, \beta}I - A \right)x_\beta - H(x)_\beta, \qquad &\mbox{if } | \beta | \geq 2. 
\end{cases}
\end{align*}
If the sequence $\pode = \{ \pode_\beta \}_{\beta \in \mathbb{N}^d}$ has the properties 
\[ \zeroode( \pode ) = 0 \qquad \mbox{ and } \qquad \sum_{\beta \in \mathbb{N}^d} \left| \pode_\beta \right| < \infty  \]
then the graph of the function 
\begin{align*}
\pode: \mathbb{C}^d \supseteq B(0, 1) \to \mathbb{R}^n \\
\pode(\sigma) = \sum_{\beta \in \mathbb{N}^d} \pode_\beta \sigma^\beta
\end{align*}
is a local unstable manifold for the ODE \eqref{eq:ode}. 
\end{proposition}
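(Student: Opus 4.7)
The plan is to show that the conditions $\zeroode(\pode) = 0$ together with the summability $\sum_\beta \abs{\pode_\beta} < \infty$ imply the functional conjugacy
\begin{equation*}
A\,\pode(\sigma) + \nonlinode(\pode(\sigma)) = D\pode(\sigma)\,\Lambda\sigma
\end{equation*}
as an identity of analytic maps on the unit polydisk $\dd^d$. Once this identity is in hand, the invariance of the graph of $\pode$ under the flow of \eqref{eq:ode} is a direct calculation, and tangency to the unstable eigenspace follows from the normalization conditions encoded in $\zeroode$.

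First I would exploit the hypothesis $\sum_\beta \abs{\pode_\beta} < \infty$ to conclude that $\pode(\sigma) := \sum_\beta \pode_\beta \sigma^\beta$ converges absolutely and uniformly on the closed polydisk $\overline{\dd^d}$ and hence defines a $\cc^n$-valued analytic map on $\dd^d$. Standard Cauchy estimates then legitimize termwise differentiation of this power series, and the same estimates justify the identification of the Taylor coefficients of the composition $\nonlinode \circ \pode$ with the image of $\pode$ under the operator $\hoode$ defined in the statement.

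Next I would establish the conjugacy equation by matching Taylor coefficients order by order. Termwise differentiation gives that the $\beta$-th coefficient of $D\pode(\sigma)\,\Lambda\sigma$ is $\dotp{\lambda,\beta}\pode_\beta$, while the $\beta$-th coefficient of the left-hand side is $A\pode_\beta + \hoode(\pode)_\beta$. For $\abs{\beta}\geq 2$ these match precisely because $\zeroode(\pode)_\beta = 0$. For $\abs{\beta}=1$ I would invoke $\pode_{e_j} = \xi_{e_j}$ together with $A\xi_{e_j} = \lambda_j \xi_{e_j}$ and the observation that $\hoode(\pode)_{e_j} = 0$, which uses that $\nonlinode$ vanishes to second order at $0$. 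For $\beta = 0$ the natural normalization $\pode_0 = 0$ (consistent with the equilibrium lying at the origin and unconstrained by $\zeroode$) gives both sides equal to zero.

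Finally, with the identity in hand, I would set $x(t) := \pode(e^{\Lambda t}\sigma_0)$ for arbitrary $\sigma_0 \in \dd^d$; differentiating and substituting the conjugacy yields $\dot x(t) = A x(t) + \nonlinode(x(t))$, so the graph of $\pode$ is flow-invariant. The normalizations $\pode_0 = 0$ and $D\pode(0) = \bigl(\xi_{e_1} \mid \cdots \mid \xi_{e_d}\bigr)$ show that this graph is a $d$-dimensional analytic manifold through the origin, tangent to the unstable eigenspace; since $e^{\Lambda t}\sigma_0 \to 0$ as $t\to -\infty$ under Hypothesis \ref{hyp:ODE}, every point of the graph is backward asymptotic to the origin, so the graph is contained in the local unstable manifold. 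By the standard uniqueness of the local unstable manifold, the two coincide in a neighborhood of $0$.

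The main obstacle is the analytic bookkeeping in the second step: rigorously justifying that the Taylor coefficients of $\nonlinode(\pode(\sigma))$ are exactly $\hoode(\pode)_\beta$ and that differentiation commutes with the multi-indexed summation. This is where the $\ell^1$ summability assumption does its essential work, converting the formal algebraic identity $\zeroode(\pode)=0$ into a genuine analytic conjugacy from which the dynamical conclusions follow immediately.
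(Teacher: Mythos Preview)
Your proposal is correct and follows essentially the same route as the paper. The paper does not give a separate formal proof of this proposition; it is presented as a summary (with a citation to \cite{Cabre03}) of the discussion preceding it, which derives the conjugacy equation \eqref{eq:conjugacy}, expands both sides in Taylor series to obtain the homological equations \eqref{eq:hom eqn ode}--\eqref{eq:order zero one ode}, and then remarks that if a coefficient sequence satisfies these equations and is $\ell^1$-summable, the resulting power series is analytic and its graph is the local unstable manifold. Your write-up makes this informal argument precise in exactly the expected way: summability gives analyticity, the vanishing of $\zeroode(\pode)$ gives the coefficient identities, and the conjugacy then yields invariance and tangency. Your observation that the condition $\zeroode(x)_0 = 0$ places no constraint on $\pode_0$, so that one must separately impose $\pode_0 = 0$, is a fair point about the statement as written; the paper's discussion assumes this normalization implicitly via \eqref{eq:order zero one ode}.
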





Before continuing, we briefly remark about zero finding problems and computer assisted proof in general. Specifically, we conider the question of solving a zero finding problem of the form $\cF(\cP) = 0$ in the case that $\cF : X \to Y$ is a map between (possibly infinite dimensional) Banach spaces. There are a number of strategies for this which are based on computer assisted proof. In this work we employ two ``constructive'' versions of the Contraction Mapping Theorem based on results initially found in \cite{MR1639986,MR2338393}. The explicit statements for each theorem can be found in Appendix \ref{appendix:contraction_mapping}. 

Both cases include obtaining a highly accurate numerical approximation as a key ingredient. Specifically, using classical numerical techniques we obtain $\hat{x} \in X$ satisfying $\cF(\hat{x}) \approx 0$. It should also be true that $\hat{x}$ can be represented finitely in some way which we can store exactly in a computer. With this in hand we attempt to identify a positive real number $r$ for which we can prove that $\cF \big|_{B_r(\hat{x})}$ is a contraction. The methods for finding $r$ and for verifying that this restriction is a contraction vary from problem to problem and are best demonstrated in concrete examples which we do in Section \ref{sec:CAP}.

\subsection{Parametrization method for DDE} \label{sec:parametrization dde}

We next review the parametrization method for DDE. For notational simplicity, we restrict again our attention to scalar valued DDE, i.e. DDE of the form \eqref{eq:DDE} where 
$L: C \left([-1, 0], \mathbb{R}\right) \to \mathbb{R}$ is linear and $G: C \left([-1, 0], \mathbb{R}\right) \to \mathbb{R}$ is smooth. Under the additional assumption $G(0) = G'(0) = 0$, the DDE \eqref{eq:DDE} has an equilibrium $x \equiv 0$,
and the linearization of \eqref{eq:DDE} around this equilibrium is given by
\begin{equation} \label{eq:linear DDE}
\dot{x}(t) = L x_t, \qquad t \geq 0. 
\end{equation}
The linear DDE \eqref{eq:linear DDE} has a solution of the form $\lambda \mapsto e^{\lambda t}, \ \lambda \in \mathbb{C}$ if and only if $\lambda$ is a root of the equation 
\begin{equation} \label{eq:ce}
\lambda - L \varepsilon_\lambda = 0, 
\end{equation}
where the function $\varepsilon_\lambda \in C\left([-1, 0], \mathbb{R}\right)$ is defined as 
\begin{equation} \label{eq:varepsilon}
\varepsilon_\lambda(\theta) = e^{\lambda \theta} \qquad \theta \in [-1, 0].
\end{equation}
Equation \eqref{eq:ce} is called the \emph{characteristic equation} of the DDE \eqref{eq:linear DDE}; the function 
\begin{equation}
\Delta: \mathbb{C} \to \mathbb{C}, \qquad \Delta(\lambda) = \lambda - L \varepsilon_\lambda
\end{equation}
is called the \emph{characteristic function}. 

If the characteristic equation \eqref{eq:ce} has $d$ roots
\[ \lambda_1, \ldots, \lambda_d \]
in the right half of the complex plane, then the equilibrium $x \equiv 0$ of \eqref{eq:DDE} has a $d$-dimensional unstable manifold \cite{Diekmann95}. 
Locally, this unstable manifold is the graph of a function
\begin{equation}
\pdde: \mathbb{C}^d \supseteq B(0, 1) \to C \left([-1, 0], \mathbb{R}\right). 
\end{equation}
Since the function $\pdde$ takes values in a function space, the coefficients $P_\beta$ in the Taylor expansion
\[ \pdde(\sigma) = \sum_{\substack{\beta \in \mathbb{N}^d \\ 0 \leq \left|\beta \right| \leq k}} \pdde_\beta \sigma^\beta + \mathcal{O}(\sigma^{k+1})\]
are elements of $C \left([-1, 0], \mathbb{R}\right) \to \mathbb{R}$, i.e. the coefficients are in turn functions. In \cite{Groothedde17}, the authors showed that each function $P_\beta$ is of the form 
\[ \pdde_\beta = p_\beta \varepsilon_{\left \langle \lambda, \beta \right \rangle}, \]
where  $\lambda = (\lambda_1, \ldots, \lambda_d) \in \mathbb{C}^d$, the exponential functions $\varepsilon_{\dotp{\lambda, \beta}}$ are defined via \eqref{eq:varepsilon}, and the coefficients $p_\beta \in \mathbb{R}$ are now \emph{scalars}. 

Under the additional \emph{non-resonance condition}
\begin{equation*} 
\Delta(\left \langle \lambda, \beta \right \rangle) \neq 0 \qquad \mbox{for all } \beta = (\beta_1, \ldots, \beta_d) \in \mathbb{N}^d \quad \mbox{with} \quad |\beta| > 2,  
\end{equation*}
the authors of \cite{Groothedde17} show that the coefficients $p_\beta \in \mathbb{R}$ satisfy
\begin{equation} \label{eq:conjugacy dde}
\sum_{\substack{\beta \in \mathbb{N}^d \\ 0 \leq \left|\beta \right| \leq k}} \dotp{\lambda, \beta} p_\beta \sigma^\beta = \sum_{\substack{\beta \in \mathbb{N}^d \\ 0 \leq \left|\beta \right| \leq k}} p_\beta L  \varepsilon_{\dotp{\lambda, \beta}} \sigma^\beta + G \left(\sum_{\substack{\beta \in \mathbb{N}^d \\ 0 \leq \left|\beta \right| \leq k}} p_\beta \varepsilon_{\dotp{\lambda, \beta}}\sigma^\beta \right) +\mathcal{O}(\sigma^{k+1})
\end{equation}
for each $k \in \mathbb{N}$. 
Since the smooth function $G$ satisfies $G(0) = 0, \ G'(0) = 0$, we can write
\begin{equation} \label{eq:G composition}
G \left(\sum_{\substack{\beta \in \mathbb{N}^d \\ 0 \leq \left|\beta \right| \leq k}} p_\beta \varepsilon_{\dotp{\lambda, \beta}}\sigma^\beta\right) = \sum_{\substack{\beta \in \mathbb{N}^d\\ 0 \leq \left|\beta \right| \leq k}} \tilde{p}_\beta \sigma^\beta + \mathcal{O}(\sigma^{k+1}).
\end{equation}
where each of the coefficient $\tilde{p}_\beta \in \mathbb{C}$ can be expressed in terms of $p_{\beta'}$ with $\left| \beta' \right| < \left| \beta \right|$. 
We then rewrite \eqref{eq:conjugacy dde} as
\[ \sum_{\beta \in \mathbb{N}^d} \dotp{\lambda, \beta} p_\beta \sigma^\beta - \sum_{\beta \in \mathbb{N}^d} L p_\beta \varepsilon_{\dotp{\lambda, \beta}} \sigma^\beta = \sum_{\beta \in \mathbb{N}^d} \tilde{p}_\beta \sigma^\beta + \mathcal{O}(\sigma^{k+1}). \]
If we equate terms of order $\beta$ for $\abs{\beta} \geq 2$, we obtain that
\[ \dotp{\lambda, \beta} p_\beta - L p_\beta \varepsilon_{\dotp{\lambda, \beta}} = \tilde{p}_\beta \]
or equivalently that
\begin{subequations}
\begin{equation} \label{eq:scalar dde}
\Delta(\dotp{\lambda, \beta}) p_\beta = \tilde{p}_\beta
\end{equation}
for $\abs{\beta} \geq 2$. 
We supplement \eqref{eq:scalar dde} with the constraints
\begin{equation} \label{eq:order zero one dde}
\begin{aligned}
\begin{cases}
p_\beta = 0 \qquad &\mbox{for } \beta = 0; \\
p_\beta = \xi_\beta \qquad &\mbox{for } | \beta | = 1;
\end{cases}
\end{aligned}
\end{equation}
\end{subequations}
where for each $\abs{\beta} = 1$, $\xi_\beta \in \mathbb{R}$ is a (fixed) constants that can be chosen freely. Equations \eqref{eq:scalar dde}--\eqref{eq:order zero one dde} now give us a method to compute the coefficients $p_\beta, \ \beta \in \mathbb{N}^d$ recursively: if for a given $\beta$, we have computed the coefficients $\{ p_{\beta'} \}_{\left| \beta' \right| < \left| \beta \right|}$, we can compute $\tilde{p}_\beta$ and then solve for $p_\beta$ from \eqref{eq:scalar dde}. Moreover, if the sequence $(p_\beta)_{\beta \in \mathbb{N}^d}$ satisfies \eqref{eq:scalar dde}--\eqref{eq:order zero one dde} and addtionally satisfies
\[ \sum_{\beta \in \mathbb{N}^2} \left| p_\beta \right| < \infty, \]
then the graph of the function 
\begin{equation} \label{eq:analytic}
\sigma \mapsto \sum_{\beta \in \mathbb{N}^d} p_\beta \varepsilon_{\dotp{\lambda, \beta}} \sigma^\beta
\end{equation}
is the local unstable manifold of the DDE \eqref{eq:DDE}; cf. \cite{Henot22}. 

\medskip

We summarize the above discussion in the next hypothesis and proposition.

\begin{hyp} \label{hyp:DDE}
We make the following assumptions on the DDE \eqref{eq:DDE}: 
\begin{enumerate}
\item The map $L: C \left([-1, 0], \mathbb{R}\right) \to \mathbb{R}$ is a bounded linear operator;
\item The map $G: C \left([-1, 0], \mathbb{R}\right) \to \mathbb{R}$ is smooth (i.e. it is $C^k$ for every $k \in \mathbb{N}$) and satisfies $G(0) = 0, \ G'(0) = 0$. 
\item The characteristic equation \eqref{eq:ce} has $d$ roots $\lambda_1, \ldots, \lambda_d$ in the right half of the complex plane; and these roots satisfy the non-resonance condition 
\begin{equation} \label{eq:non-resonance}
\Delta(\left \langle \lambda, \beta \right \rangle) \neq 0 \qquad \mbox{for all } \beta = (\beta_1, \ldots, \beta_d) \in \mathbb{N}^d \quad \mbox{with} \quad |\beta| > 2. 
\end{equation}
\end{enumerate}
\end{hyp}

\begin{proposition}[cf. \cite{Henot22}] \label{prop:pm dde}
Consider the DDE \eqref{eq:DDE} satisfying Hypothesis \ref{hyp:DDE}. Denote by 
\[ 
\hodde: \mathcal{S}_d \to \mathcal{S}_d 
\]
the map with the property that 
\begin{equation} \label{eq:H dde}
G\left(\sum_{\substack{\beta \in \mathbb{N}^d \\ | \beta | \leq k }} x_\beta \varepsilon_{\dotp{\lambda, \beta}} \sigma^\beta \right) = \sum_{\substack{\beta \in \mathbb{N}^d \\ | \beta | \leq k }} \hodde(x)_{\beta} \sigma^\beta + \mathcal{O}(\sigma^{k+1}) 
\end{equation}
for every sequence $x = (x_\beta)_{\beta \in \mathbb{N}^d}$. For each $\beta$ with $|\beta| = 1$, fix a scalar $\xi_\beta \in \mathbb{R}$ and define the map 
\[  \zerodde: \mathcal{S}_d \to \mathcal{S}_d \]
as 
\begin{align*}
\zerodde (x)_\beta = \begin{cases}
0 \qquad &\mbox{for } \beta = 0, \\
x_\beta - \xi_\beta \qquad &\mbox{for } |\beta|  = 1, \\
\Delta(\dotp{\lambda, \beta}) x_\beta - H(x)_\beta, \qquad &\mbox{for } | \beta | \geq 2.
\end{cases}
\end{align*}
If the sequence $p = \{ p_\beta \}_{\beta \in \mathbb{N}^d}$ has the properties 
\[ \zerodde (p) = 0 \qquad \mbox{ and } \qquad \sum_{\beta \in \mathbb{N}^d} \left| p_\beta \right| < \infty,\]
then the graph of the function 
\begin{align*}
\pdde: \mathbb{C}^d \supseteq B(0, 1) \to C \left([-1, 0], \mathbb{R}\right) \\
\pdde (\sigma) = \sum_{\beta \in \mathbb{N}^d} p_\beta \varepsilon_{\dotp{\lambda, \beta}} \sigma^\beta
\end{align*}
is a local unstable manifold for the DDE \eqref{eq:DDE}. 
\end{proposition}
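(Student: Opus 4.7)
The plan is to verify that the candidate function
\[
\pdde(\sigma) = \sum_{\beta \in \nn^d} p_\beta \varepsilon_{\dotp{\lambda,\beta}} \sigma^\beta
\]
defines a local parametrization of the unstable manifold by establishing three properties in succession: convergence of the defining series to a $C([-1,0],\rr)$-valued analytic function on $\dd^d$, consistency with the transport equation governing history segments, and satisfaction of the DDE at the present-time coordinate $\theta=0$.

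First I would show that $\pdde$ is well-defined and analytic on $\dd^d$. Since each $\lambda_j$ has positive real part and $\theta \in [-1,0]$, the bound $\abs*{\varepsilon_{\dotp{\lambda,\beta}}(\theta)} \leq 1$ holds uniformly in $\beta$ and $\theta$. Combined with $\sum_\beta \abs{p_\beta} < \infty$, this gives absolute and uniform convergence of the series in $C([-1,0],\rr)$ over the closed unit polydisk. Taking $p_0 = 0$ (so that the graph passes through the equilibrium), for $\sigma_0 \in \cc^d$ and $t \in \rr$ small enough that $e^{\Lambda t}\sigma_0 \in \dd^d$ I would define $x(t) := \pdde(e^{\Lambda t}\sigma_0)(0)$ and verify the transport identity
\[
\pdde(e^{\Lambda t}\sigma_0)(\theta) \;=\; \sum_\beta p_\beta\, e^{\dotp{\lambda,\beta}(t+\theta)}\sigma_0^\beta \;=\; \pdde(e^{\Lambda(t+\theta)}\sigma_0)(0) \;=\; x(t+\theta),
\]
which shows that the history segment of $x$ at time $t$ is precisely $\pdde(e^{\Lambda t}\sigma_0) \in C([-1,0],\rr)$.

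The substantive step is to check that $x$ solves the DDE. Differentiating $x$ in $t$, pulling the bounded operator $L$ through the series using absolute convergence, and expanding $G\bigl(\pdde(e^{\Lambda t}\sigma_0)\bigr)$ by means of the defining identity for $\hodde$ reduces $\dot{x}(t) = Lx_t + G(x_t)$ to the coefficient-wise identity
\[
\dotp{\lambda,\beta}\, p_\beta \;=\; p_\beta\, L\varepsilon_{\dotp{\lambda,\beta}} + \hodde(p)_\beta \qquad \text{for all } \beta \in \nn^d.
\]
Using $L\varepsilon_\mu = \mu - \Delta(\mu)$, this rearranges to $\Delta(\dotp{\lambda,\beta})\, p_\beta = \hodde(p)_\beta$, which is exactly $\zerodde(p)=0$ for $\abs{\beta}\geq 2$. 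For $\abs{\beta}=1$ the identity is automatic, since $\Delta(\lambda_j)=0$ by the characteristic equation and $\hodde(p)_\beta = 0$ because $G'(0) = 0$; the imposed value $p_\beta = \xi_\beta$ only fixes the scaling of the parametrization along each eigendirection. Tangency of the graph to the unstable eigenspace at the origin then follows because $D\pdde(0)e_j = \xi_{e_j}\varepsilon_{\lambda_j}$, which is an eigenfunction of the infinitesimal generator of the DDE semigroup at the unstable eigenvalue $\lambda_j$.

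The main obstacle will be justifying the passage from a formal-power-series identity to an equality of actual $C([-1,0],\rr)$-valued analytic functions when composing the nonlinearity $G$ with $\pdde$; strictly speaking this requires $G$ to be real-analytic, so that $G \circ \pdde$ admits a convergent power series expansion in $\sigma$ rather than only an asymptotic expansion at each finite order (an assumption implicit in the setting of Hypothesis \ref{hyp:DDE} and standard in the parametrization method, cf.\ \cite{Cabre03}). Once this identification is available and $\zerodde(p)=0$ yields the coefficient-wise conjugacy, the graph of $\pdde$ is a forward-invariant, analytic, $d$-dimensional submanifold tangent to the unstable eigenspace at the equilibrium; the standard uniqueness of the local unstable manifold (cf.\ \cite{Henot22,Diekmann95}) then forces it to coincide with the unstable manifold of the DDE.
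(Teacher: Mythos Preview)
Your argument is correct and in fact more complete than what the paper provides. The paper does not give a self-contained proof of this proposition: the preceding discussion \emph{derives} the scalar homological relations $\Delta(\dotp{\lambda,\beta})p_\beta = \tilde p_\beta$ by starting from the abstract existence of the unstable manifold (citing \cite{Diekmann95}) and the structure of its Taylor coefficients (citing \cite{Groothedde17}), and then simply asserts the converse direction---that an $\ell^1$ solution of these relations parametrizes the manifold---by reference to \cite{Henot22}. Your route is the direct verification: establish analyticity of $\pdde$ from the $\ell^1$ bound and $\Re\lambda_j>0$, exploit the exponential ansatz to get the transport identity $\pdde(e^{\Lambda t}\sigma_0)(\theta)=x(t+\theta)$ for free, reduce the DDE at $\theta=0$ to the coefficient-wise conjugacy, and conclude by tangency plus uniqueness of the local unstable manifold. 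This buys a self-contained argument that makes transparent \emph{why} the particular shape $p_\beta\varepsilon_{\dotp{\lambda,\beta}}$ works, at the cost of redoing what \cite{Henot22} already packages. Your flagged obstacle is also well placed: Hypothesis~\ref{hyp:DDE} only asks $G$ to be $C^\infty$, whereas identifying the formal series for $G\circ\pdde$ with a convergent one genuinely needs $G$ analytic (or at least that $G\circ\pdde$ is); the paper sidesteps this by outsourcing the rigorous statement to \cite{Henot22}.
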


\noindent

Wright's equation
\begin{equation*} 
\dot{x}(t) = - \alpha x(t-1)(1+x(t)),
\end{equation*}
with $x(t) \in \mathbb{R}$ and parameter $\alpha \in \mathbb{R}$, has an equilibriam $x \equiv 0$; the characteristic function of the linearized equation is given by 
\begin{equation} \label{eq:ce wright}
\Delta(\lambda) = \lambda + \alpha e^{-\lambda}.
\end{equation}
For $\frac{\pi}{2} < \alpha < \frac{5 \pi}{2}$, the characteristic function $\Delta$ has exactly two zeroes in the right half of the complex plane (see, for example, \cite{Hayes50}, \cite[Appendix]{HaleVL93}, \cite[Chapter XI]{Diekmann95}, or one of the many other references that study the roots of the transcedental function \eqref{eq:ce wright}). We denote these eigenvalues by 
\[ \lambda_+, \lambda_- \]
where $\lambda_+$ has positive imaginary part and $\lambda_-$ has negative imaginary part. Since $\lambda_+, \ \lambda_-$ are the only zeroes of $\Delta$ in the right half of the complex plane, the non-resonance condition \eqref{eq:non-resonance} is automatically satisfied. Hence we can apply Proposition \ref{prop:pm dde} to Wright's equation: 

\begin{corollary} \label{cor:pm_wright_dde}
Consider Wright's equation
\begin{equation} \label{eq:wright_lemma}
\dot{x}(t) = - \alpha x(t-1)(1+x(t))
\end{equation}
with $x(t) \in \mathbb{R}$ and $\frac{\pi}{2} < \alpha < \frac{5 \pi}{2}$. Define the function $\Delta$ as in \eqref{eq:ce wright} and denote by $\lambda = (\lambda_+, \lambda_-)$
the two zeroes of the $\Delta$ in the right half of the complex plane.

For $\beta \in \mathbb{N}^2$ with $|\beta| = 1$, fix scalars $\xi_\beta \in \mathbb{R}$ and define the function
$F: \mathcal{S}_2 \to \mathcal{S}_2$ as 
\begin{equation} \label{eq:zero_map_wright_dde}
\begin{aligned}
F(x)_\beta = \begin{cases}
x_\beta \qquad &\mbox{if } \beta = 0, \\
x_\beta - \xi_\beta \qquad &\mbox{if } |\beta| = 1, \\
\Delta(\dotp{\lambda, \beta}) x_\beta + \alpha (x*\blowupscalardde(x)), \qquad &\mbox{if } | \beta | \geq 2,
\end{cases}
\end{aligned}
\end{equation}
where the operator $\blowupscalardde: \mathcal{S}_2 \to \mathcal{S}_2$ is defined via the relation 
\begin{equation} \label{eq:def R}
\blowupscalardde(x)_\beta = x_\beta e^{-\dotp{\lambda, \beta}}, \qquad \beta \in \mathbb{N}^2,
\end{equation} 
and the convolution operator $*: \mathcal{S}_2 \times \mathcal{S}_2 \to \mathcal{S}_2$ is defined via the relation 
\begin{equation} \label{eq:def convolution}
(x*y)_\beta = \sum_{\beta_1 + \beta_2 = \beta} x_{\beta_1} y_{\beta_2}, \qquad \beta \in \mathbb{N}^2. 
\end{equation}
Then, if $p$ is a sequence such that $F(p) = 0$ and $\sum_{\beta} | p_\beta| < \infty$, 
then the graph of the function
\[ P(\sigma) = \sum_{\beta \in \mathbb{N}^2} p_\beta \varepsilon_{\dotp{\lambda, \beta}} \sigma^\beta\]
is the local unstable manifold around the origin of \eqref{eq:wright_lemma}. 
\end{corollary}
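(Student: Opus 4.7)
The strategy is to recognize this corollary as a direct specialization of Proposition \ref{prop:pm dde} to Wright's equation, so the proof reduces to (i) verifying Hypothesis \ref{hyp:DDE} for this DDE in the given parameter range, and (ii) computing the abstract nonlinear map $H$ of Proposition \ref{prop:pm dde} explicitly and matching it to the convolution expression $\alpha(x*r(x))$ appearing in the definition of $F$.

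First I would verify Hypothesis \ref{hyp:DDE}. From Example \ref{ex:wright}, Wright's equation has $Lx_t = -\alpha x(t-1)$ and $G(x_t) = -\alpha x(t-1)x(t)$; clearly $L$ is a bounded linear functional, and $G$ is the evaluation of a quadratic polynomial (hence smooth) with $G(0)=0$ and $G'(0)=0$. A short computation gives $\Delta(\lambda) = \lambda + \alpha e^{-\lambda}$, and for $\frac{\pi}{2} < \alpha < \frac{5\pi}{2}$ the classical references already cited in the text (e.g.\ \cite{Hayes50}, \cite[Appendix]{HaleVL93}, \cite[Chapter XI]{Diekmann95}) yield exactly two roots $\lambda_+, \lambda_-$ in the right half plane, which are complex conjugates of each other. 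Non-resonance is then automatic: for any $\beta \in \mathbb{N}^2$ with $|\beta| \geq 2$ one has
\[
\operatorname{Re}\dotp{\lambda,\beta} = (\beta_1+\beta_2)\operatorname{Re}(\lambda_+) \geq 2\operatorname{Re}(\lambda_+) > \operatorname{Re}(\lambda_\pm),
\]
so $\dotp{\lambda,\beta}$ cannot equal $\lambda_+$ or $\lambda_-$, and since it has strictly positive real part it cannot coincide with any of the remaining (stable) roots of $\Delta$ either.

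Next I would compute $H$ explicitly. Given a sequence $p = (p_\beta)_{\beta \in \mathbb{N}^2}$, evaluating the formal series in the $\theta$ variable yields
\[
P(\sigma)(\theta) = \sum_{\beta \in \mathbb{N}^2} p_\beta\, e^{\dotp{\lambda,\beta}\theta}\, \sigma^\beta.
\]
Because Wright's nonlinearity is the pointwise product $G(x_t) = -\alpha\, x_t(-1)\, x_t(0)$, substituting the above Ansatz produces a product of two formal power series in $\sigma$:
\[
G(P(\sigma)) = -\alpha \Bigl(\sum_{\beta} p_\beta e^{-\dotp{\lambda,\beta}} \sigma^\beta \Bigr) \Bigl(\sum_{\beta} p_\beta \sigma^\beta \Bigr).
\]
The left-hand factor is exactly the sequence $r(p)$ of \eqref{eq:def R}, and multiplying the two formal series by the Cauchy product yields the convolution \eqref{eq:def convolution}, so that
\[
G(P(\sigma)) = -\alpha \sum_{\beta \in \mathbb{N}^2} (p * r(p))_\beta\, \sigma^\beta.
\]
Comparing with the defining relation \eqref{eq:H dde}, I read off $H(p)_\beta = -\alpha\, (p * r(p))_\beta$. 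Substituting this into the formula for $\zerodde$ in Proposition \ref{prop:pm dde} gives precisely the map $F$ displayed in \eqref{eq:zero_map_wright_dde}, where the $\beta = 0$ entry has been replaced by $F(x)_0 = x_0$ to additionally enforce $p_0 = 0$ (i.e.\ that the parametrization fixes the origin).

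Given these two ingredients, the conclusion follows immediately from Proposition \ref{prop:pm dde}: any absolutely summable sequence $p$ with $F(p) = 0$ produces a local unstable manifold at the origin as the graph of $\sigma \mapsto \sum_\beta p_\beta\, \varepsilon_{\dotp{\lambda,\beta}}\, \sigma^\beta$. The only step requiring genuine work is the bookkeeping that identifies $H$ with the explicit convolution $-\alpha\, p * r(p)$; everything else is a verification of hypotheses or a direct quotation from the literature, so I do not anticipate any real obstacle.
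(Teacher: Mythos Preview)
Your proposal is correct and follows essentially the same approach as the paper: identify $L$ and $G$ for Wright's equation, compute the map $H$ of Proposition~\ref{prop:pm dde} explicitly as $H(p)=-\alpha\,(p*r(p))$ via the Cauchy product, and then invoke Proposition~\ref{prop:pm dde}. You are in fact more thorough than the paper's proof, which leaves the verification of Hypothesis~\ref{hyp:DDE} (in particular non-resonance) to the discussion preceding the corollary and does not comment on the $\beta=0$ discrepancy you correctly flag.
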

\begin{proof}
Wright's equation \eqref{eq:wright_lemma} is of the form \eqref{eq:DDE} with the functionals $L$ and $G$ given in \eqref{eq:L G wright}. Given a sequence $p = (p_\beta)_{\beta \in \mathbb{N}^2}$, we (formally) compute that 
\begin{align*}
G\left(\sum_{\beta \in \mathbb{N}^2} p_\beta \varepsilon_{\dotp{\lambda, \beta}} \sigma^\beta \right) &= - \alpha \left(\sum_{\beta \in \mathbb{N}^2} p_\beta e^{-\dotp{\lambda, \beta}} \sigma^\beta \right) \left(\sum_{\beta \in \mathbb{N}^2} p_\beta \sigma^\beta \right) \\
&= - \alpha \sum_{\beta \in \mathbb{N}^2} \left(\sum_{\beta_1 + \beta_2 =  \beta} p_{\beta_1} e^{-\dotp{\lambda, \beta_1}} p_{\beta_2} \right) \sigma^\beta
\end{align*}
so that the function $\hodde: \mathcal{S}_2 \to \mathcal{S}_2$ in \eqref{eq:H dde} is in this case given by 
\begin{equation} \label{eq:H wright}
\hodde(p)_\beta = - \alpha \sum_{\beta_1 + \beta_2 =  \beta} p_{\beta_1} e^{-\dotp{\lambda, \beta_1}} p_{\beta_2}. 
\end{equation}
With the notation \eqref{eq:def R}, \eqref{eq:def convolution}, we can rewrite this as 
\[ \hodde(p) = - \alpha(p*\blowupscalardde(p)). \]
An application of Proposition \ref{prop:pm dde} now proves the claim. 
\end{proof}

\section{Parametrization method for pseudospectral ODE}

\label{sec:parametrization ps}
In this section, we apply the parametrization method for ODE in the specific case where the ODE is a pseudospectral approximation of a DDE.
To that end, we again consider the DDE 
\eqref{eq:DDE} with $L: C([-\tau, 0], \mathbb{R}) \to \mathbb{R}$ is a bounded linear operator and $G: C([-\tau, 0], \mathbb{R}) \to \mathbb{R}$ a smooth functional that satisfies $G(0) = G'(0) = 0$. The pseudospectral approximation of \eqref{eq:DDE} is of the form 
\begin{subequations}
\begin{equation} \label{eq:ps ode A g}
\frac{d}{dt} \begin{pmatrix}
y_0 \\ y
\end{pmatrix} 
 = \linps \begin{pmatrix}
 y_0 \\ y
 \end{pmatrix}
+ \nonlinps \begin{pmatrix}
y_0 \\ y
\end{pmatrix}
\end{equation}
where the linear map $\linps: \mathbb{R}^{n+1} \to \mathbb{R}^{n+1}$ is given by
\begin{equation} \label{eq:ps A}
\linps \begin{pmatrix}
y_0 \\ y
\end{pmatrix} = \begin{pmatrix}
LP(y_0, y) \\
-y_0 D \textbf{1} + Dy
\end{pmatrix}
\end{equation}
and where the function $\nonlinps$ 
\begin{equation} \label{eq:ps g}
\nonlinps(y_0, y) = \begin{pmatrix}
G(P(y_0, y)) \\ 0
\end{pmatrix}
\end{equation}
satisfies
$\nonlinps (0) = 0$ and $D \nonlinps (0) = 0$.
\end{subequations}
We start by giving a characterization of the eigenvectors of the matrix $\linps$. 

\begin{lemma}[cf. \cite{breda05, dW21}] \label{lem:spectrum ps}
Let $\linps: \mathbb{R} \times \mathbb{R}^n \to \mathbb{R} \times \mathbb{R}^n$ be the linear map defined in \eqref{eq:ps A}. Denote by $D: \mathbb{R}^n \to \mathbb{R}^n$ the matrix defined in \eqref{eq:def D 1} and let $\textbf{1} = (1, \ldots, 1)^T \in \mathbb{R}^n$. Assume that $\lambda \in \mathbb{C}$ is such that 
\[ D - \lambda I \mbox{ is invertible.} \]
Then the following statements hold: 
\begin{enumerate}
\item $\lambda$ is an eigenvalue of $\linps$ if and only if $\Delta_n(\lambda) = 0$, where 
\begin{equation} \label{eq:def Delta n}
\Delta_n(\lambda) := \lambda - LP(1, (D-\lambda I)^{-1}D\textbf{1}));
\end{equation}
\item If $(\zeta, \eta)^T \in \mathbb{R} \times \mathbb{R}^n$ is an eigenvector of $\linps$ with eigenvalue $\lambda$, then $(\zeta, \eta)^T$ is of the form 
\[ \begin{pmatrix}
\zeta \\ \eta
\end{pmatrix} = \zeta \begin{pmatrix}
1 \\ (D - \lambda I)^{-1} D \textbf{1}
\end{pmatrix}. \]
\end{enumerate}
\end{lemma}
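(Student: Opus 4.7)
The plan is to unpack the eigenvector equation $\linps (\zeta, \eta)^T = \lambda (\zeta, \eta)^T$ component-wise using the explicit form of $\linps$ given in \eqref{eq:ps A}, and then exploit invertibility of $D - \lambda I$ to eliminate $\eta$ in favor of $\zeta$. Concretely, the equation splits into
\[
LP(\zeta, \eta) = \lambda \zeta, \qquad -\zeta D \textbf{1} + D \eta = \lambda \eta.
\]
The second equation rearranges to $(D - \lambda I)\eta = \zeta\, D \textbf{1}$, and since $D - \lambda I$ is invertible by hypothesis, it determines $\eta$ uniquely in terms of $\zeta$ as $\eta = \zeta (D-\lambda I)^{-1} D \textbf{1}$. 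This already establishes claim (b), once one also checks that $\zeta \neq 0$ for any genuine eigenvector: indeed, $\zeta = 0$ forces $(D - \lambda I)\eta = 0$ and hence $\eta = 0$ by invertibility, which is not allowed for an eigenvector.

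For claim (a), I would substitute the expression for $\eta$ obtained above into the first scalar equation $LP(\zeta, \eta) = \lambda \zeta$. Linearity of $L$ and bilinearity (in fact linearity in each slot) of the interpolation operator $P$ defined in \eqref{eq:def P} allow one to factor out $\zeta$, yielding
\[
\zeta \left[ \lambda - L P\bigl(1, (D-\lambda I)^{-1} D \textbf{1}\bigr) \right] = 0,
\]
i.e., $\zeta\, \Delta_n(\lambda) = 0$. If $\lambda$ is an eigenvalue, then as noted above one may take $\zeta \neq 0$, so $\Delta_n(\lambda) = 0$. Conversely, assuming $\Delta_n(\lambda) = 0$, setting $\zeta = 1$ and $\eta = (D - \lambda I)^{-1} D \textbf{1}$ yields a nonzero vector that satisfies both component equations, hence an eigenvector with eigenvalue $\lambda$.

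There is no real obstacle here; the entire argument is linear algebra once one sees that invertibility of $D - \lambda I$ lets us parametrize any eigenvector by its scalar first component $\zeta$. The only subtlety worth stating carefully is ruling out $\zeta = 0$ for eigenvectors, which is where the invertibility hypothesis is used a second time. The computation of $\Delta_n(\lambda)$ itself is just the Schur-complement / characteristic-equation style reduction of the $(n{+}1)\times(n{+}1)$ eigenvalue problem for $\linps$ to a scalar transcendental equation, paralleling the derivation of the DDE characteristic equation \eqref{eq:ce}; I would point this out to motivate the notation $\Delta_n$.
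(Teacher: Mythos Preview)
Your proposal is correct and follows essentially the same approach as the paper: split the eigenvector equation into its two components, use invertibility of $D-\lambda I$ to solve the second component for $\eta$ in terms of $\zeta$, observe that $\zeta\neq 0$ for any eigenvector, and substitute into the first component to obtain the scalar equation $\Delta_n(\lambda)=0$. Your treatment is slightly more explicit about the converse direction (exhibiting an eigenvector when $\Delta_n(\lambda)=0$), but the argument is otherwise identical.
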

\begin{proof}
The matrix $\linps$ has an eigenvalue $\lambda \in \mathbb{C}$ with eigenvector $(\zeta, \eta)^T \in \mathbb{R} \times \mathbb{R}^n$ if and only if
\begin{subequations}
\begin{align}
LP(\zeta, \eta) = \lambda \zeta \label{eq:eigenvector 1} \\
- \zeta D \textbf{1} + D \eta = \lambda \eta \label{eq:eigenvector 2}
\end{align}
\end{subequations}
By assumption, $D-\lambda I$ is invertible; hence \eqref{eq:eigenvector 2} holds if and only if
\begin{equation} \label{eq:proof eigenvalues}
\eta = \zeta(D-\lambda I)^{-1} D \textbf{1}, 
\end{equation}
which proves the second statement of the lemma.  

To prove the first statement of the lemma, we substitute \eqref{eq:proof eigenvalues} into \eqref{eq:eigenvector 1}, which yields
\begin{equation} \label{eq:proof eigenvalues 2}
LP(1, (D-\lambda I)^{-1} D \textbf{1}) ) \zeta = \lambda \zeta. 
\end{equation}
But \eqref{eq:proof eigenvalues} also implies that $(\zeta, \eta) \neq 0$ if and only if $\zeta \neq 0$; so if $(\zeta, \eta)$ is an eigenvector we can cancel $\zeta$ on both sides in \eqref{eq:proof eigenvalues 2} to obtain 
\[ LP(1, (D-\lambda I)^{-1} D \textbf{1}) ) = \lambda. \]
We conclude that $\lambda$ is an eigenvalue of $\linps$ if and only if 
\[ \lambda - LP(1, (D-\lambda I)^{-1} D \textbf{1}) ) = 0,\]
as claimed. 
\end{proof}

We can think of the function $P(1, (D-\lambda)^{-1} D \textbf{1})$ as an approximation of the exponential function $\varepsilon_\lambda(\theta): = e^{\lambda \theta}$; in fact, for fixed $\lambda$ it holds that
\[ \norm{P(1, (D-\lambda)^{-1} D \textbf{1})-\varepsilon_\lambda}_\infty \to 0 \qquad \mbox{as } n \to \infty; \]
where the convergence is uniform for $\lambda$ in compact subsets \cite{breda05} (but we will not use this fact in the rest of this article). 

\medskip

In the next lemma, we use the parametrization method in the context of the ODE \eqref{eq:ps ode A g}. Since the state space of the ODE \eqref{eq:ps ode A g} is the space $\mathbb{R}^{n+1}$, we a-priori expect that we can reformulate the problem of finding a $d$-dimensional unstable manifold as a zero-finding problem on a space of \emph{vector}-valued sequences $S_d^{n+1}$. But we show that, due to the special form of the ODE \eqref{eq:ps ode A g}--\eqref{eq:ps g}, we can reformulute it as a zero finding problem on the space of \emph{scalar}-valued sequences $S_d$, which gives a dimension reduction of the problem. 

\begin{theorem} \label{prop:parametrization_ps}
Consider the DDE \eqref{eq:DDE} satisfying Hypothesis \ref{hyp:DDE} and let the ODE \eqref{eq:ps ode A g}--\eqref{eq:ps g} be its pseudospectral approximation. Suppose that the matrix $\linps: \mathbb{R} \times \mathbb{R}^n \to \mathbb{R} \times \mathbb{R}^n$ has $d$ eigenvalues in the right half of the complex plane, which we gather in a vector as 
\[
\lambda_n = (\lambda_{1, n}, \ldots, \lambda_{d, n}),
\]
and assume that the eigenvalues satisfy the non-resonance condition 
\begin{equation} \label{eq:nonresoncance_ps}
\dotp{\lambda_n, \beta} \not \in \sigma(\linps) 
\end{equation}
for all $|\beta| \geq 2$. Moreover, assume that 
\[ \dotp{\lambda_n, \beta} \not \in \sigma(D) \]
for all $| \beta | \geq 1$. 

Denote by $\hops: \mathcal{S}_d \to \mathcal{S}_d$
the map with the property that 
\begin{equation} \label{eq:H ps ode}
G\left(\sum_{\substack{| \beta | \leq k }} x_\beta P(1, (D-\dotp{\lambda, \beta}I)^{-1}D\textbf{1})) \sigma^\beta \right) = \sum_{\substack{ | \beta | \leq k }} \hops (x)_{\beta} \sigma^\beta + \mathcal{O}(\sigma^{k+1})
\end{equation}
for every sequence $x = (x_\beta)_{\beta \in \mathbb{N}^d}$; 
define the map $\zerops: \mathcal{S}_d \to \mathcal{S}_d $
as 
\begin{equation} \label{eq:F_n_general}
\begin{aligned}
\zerops (x)_\beta = \begin{cases}
x_\beta \qquad &\mbox{if } \beta = 0, \\
x_\beta - \xi_\beta \qquad &\mbox{if } |\beta| = 1, \\
\Delta_n(\dotp{\lambda_n, \beta}) x_\beta - \hops(x)_\beta, \qquad &\mbox{if } | \beta | \geq 2.
\end{cases}
\end{aligned}
\end{equation}
Then, if the sequence $p_n = ( p_{n, \beta} )_{\beta \in \mathbb{N}^m}$ satisfies
\[ 
\zerops(p_n) = 0 \qquad \mbox{ and } \qquad
\sum_{\beta \in \mathbb{N}^2} \left| p_{n, \beta} \right| < \infty,\]
then the graph of the function 
\begin{align*}
\pps&: \mathbb{C}^d \supseteq B(0, 1) \to \mathbb{C}^{n+1} \\
\pps(\sigma) &= \sum_{\beta \in \mathbb{N}^d} p_{n, \beta} \begin{pmatrix}
1 \\
(D-\dotp{\lambda_n, \beta})^{-1} D \textbf{1}
\end{pmatrix}  \sigma^\beta
\end{align*}
is a local unstable manifold for the ODE \eqref{eq:ps ode A g}--\eqref{eq:ps g}. 
\end{theorem}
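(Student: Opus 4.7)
The plan is to apply Proposition \ref{prop:pm ode} to the pseudospectral ODE \eqref{eq:ps ode A g} and then use both Lemma \ref{lem:spectrum ps} and the block structure of the nonlinearity $\nonlinps$ to collapse the a-priori vector-valued Taylor recursion onto a scalar-valued one. The key ansatz will be to seek the vector Taylor coefficients of the parametrization in the factored form $p_{n,\beta} v_\beta$, where
$$v_\beta := \begin{pmatrix} 1 \\ (D - \dotp{\lambda_n, \beta} I)^{-1} D \textbf{1} \end{pmatrix} \in \mathbb{C}^{n+1}$$
is well-defined by the hypothesis $\dotp{\lambda_n, \beta} \notin \sigma(D)$. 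Lemma \ref{lem:spectrum ps}(b) identifies $v_\beta$ with a multiple of the eigenvector of $\linps$ having eigenvalue $\dotp{\lambda_n, \beta}$ whenever $|\beta|=1$, so the tangency-to-unstable-eigenspace condition from \eqref{eq:order zero one ode} is automatic upon setting $p_{n,\beta} = \xi_\beta$.

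The technical core will be a direct computation of $(\dotp{\lambda_n,\beta}I - \linps)v_\beta$. Its top entry equals $\Delta_n(\dotp{\lambda_n,\beta})$ by definition \eqref{eq:def Delta n}, while the lower block collapses to zero after rewriting
$$[\dotp{\lambda_n,\beta}I - D](D - \dotp{\lambda_n,\beta}I)^{-1}D\textbf{1} + D\textbf{1} = -D\textbf{1} + D\textbf{1}$$
via a routine resolvent manipulation. I therefore expect the clean identity $(\dotp{\lambda_n,\beta}I - \linps)v_\beta = \Delta_n(\dotp{\lambda_n,\beta})\,e_1$, where $e_1$ is the first standard basis vector of $\mathbb{C}^{n+1}$. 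Combined with Lemma \ref{lem:spectrum ps}(a) and the non-resonance hypothesis \eqref{eq:nonresoncance_ps}, this guarantees $\Delta_n(\dotp{\lambda_n,\beta}) \neq 0$ for $|\beta| \geq 2$, so the scalar recursion remains solvable at every order.

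The remaining step is to identify the Taylor coefficients of $\nonlinps(\pps(\sigma))$ likewise as scalar multiples of $e_1$. Because $\nonlinps(y_0,y) = (G(P(y_0,y)),0)^T$ by \eqref{eq:ps g}, only its top component is nonzero. Since the top entry of each $v_\beta$ equals $1$, the first component of $\pps(\sigma)=\sum_\beta p_{n,\beta} v_\beta \sigma^\beta$ reduces to $\sum_\beta p_{n,\beta}\sigma^\beta$, and linearity of the interpolation map $P$ yields
$$P\bigl(\pps(\sigma)\bigr) = \sum_\beta p_{n,\beta}\, P\bigl(1,(D - \dotp{\lambda_n,\beta}I)^{-1}D\textbf{1}\bigr)\sigma^\beta.$$
Matching Taylor coefficients of $G\circ P\circ \pps$ against the definition \eqref{eq:H ps ode} then identifies the top entry of the vector nonlinearity with $\hops(p_n)_\beta$, and so the full recursion in Proposition \ref{prop:pm ode} collapses to $p_{n,\beta}\Delta_n(\dotp{\lambda_n,\beta}) = \hops(p_n)_\beta$ for $|\beta|\geq 2$, which is precisely $\zerops(p_n)_\beta = 0$.

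Finally, I will need to verify the analyticity hypothesis $\sum_\beta \|p_{n,\beta} v_\beta\| < \infty$ required by Proposition \ref{prop:pm ode}, which reduces to a uniform bound on $\|v_\beta\|$. This follows from $\mathrm{Re}\,\dotp{\lambda_n,\beta}\to\infty$ as $|\beta|\to\infty$, which makes $\|(D - \dotp{\lambda_n,\beta}I)^{-1}\|$ decay. The main obstacle, in my view, is the bookkeeping in matching the top entry of the vector-valued Taylor expansion of $G\circ P\circ \pps$ with the scalar operator $\hops$ defined abstractly in \eqref{eq:H ps ode}; the algebraic heart of the argument is the eigenvector/resolvent computation, which is short but must be set up carefully so that the reduction to scalars is transparent.
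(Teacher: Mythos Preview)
Your proposal is correct and follows essentially the same route as the paper: apply Proposition~\ref{prop:pm ode} to the pseudospectral ODE, then exploit the block structure of $\linps$ and $\nonlinps$ together with Lemma~\ref{lem:spectrum ps} to show that the vector-valued homological equation \eqref{eq:hom eqn ode} collapses to the scalar equation $\Delta_n(\dotp{\lambda_n,\beta})\,p_{n,\beta} = \hops(p_n)_\beta$. Your ansatz-first presentation (computing $(\dotp{\lambda_n,\beta}I-\linps)v_\beta = \Delta_n(\dotp{\lambda_n,\beta})e_1$ directly) is slightly more streamlined than the paper's three-step reduction, and your explicit remark that the summability condition $\sum_\beta \|p_{n,\beta}v_\beta\|<\infty$ follows from a uniform bound on $\|v_\beta\|$ fills a small gap the paper leaves implicit.
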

\begin{proof}
We divide the proof into three steps: 

\medskip
\noindent
\textsc{Step 1:} We first apply Proposition \ref{prop:pm ode} to the ODE \eqref{eq:ps ode A g}--\eqref{eq:ps g}.   
To that end, we denote by $\tilde{\hops}: \mathcal{S}_d \times \mathcal{S}_d^n \to \mathcal{S}_d$
the map such that 
\begin{equation} \label{eq:script H ps ode}
G\left(\sum_{\substack{| \beta | \leq k }} P(a_\beta, b_\beta) \sigma^\beta \right) = \sum_{|\beta| \leq k} \left(\tilde{\hops}(a, b) \right)_\beta \sigma^\beta + \mathcal{O}(|\sigma|^{k+1}).
\end{equation}
Then the function $\nonlinps$ defined in \eqref{eq:ps g} satisfies
\begin{equation*} 
\nonlinps \left( \sum_{|\beta| \leq k} (a_\beta, b_\beta) \sigma^\beta \right) = \begin{pmatrix}
\sum_{|\beta| \leq k} \left(\tilde{\hops}(a, b)\right)_\beta \sigma^\beta \\ 0 
\end{pmatrix}
+ \mathcal{O}(|\sigma|^{k+1}). 
\end{equation*}
For $|\beta| = 1$, we denote by $(\zeta_\beta, \eta_\beta)$ the eigenvector of $\linps$ with eigenvalue $\dotp{\lambda_n, \beta}$; and define the map $\tilde{\zerops}: \mathcal{S}_d \times \mathcal{S}_d^n \to \mathcal{S}_d \times \mathcal{S}_d^n$ as 
\[ \tilde{\zerops}(a, b)_\beta 
=
\begin{cases}
0 \qquad &\mbox{if } \beta = 0 \\
(a_\beta, b_\beta)^T - (\zeta_\beta, \eta_\beta) \qquad &\mbox{if } | \beta | = 1\\
(\dotp{\lambda_n, \beta}I - \linps) \begin{pmatrix}
a_\beta \\ b_\beta
\end{pmatrix}
 = \begin{pmatrix}
 \tilde{\hops}(a, b)_\beta \\ 0
 \end{pmatrix}
 \qquad &\mbox{if } | \beta | \geq 2
\end{cases}
\]
Then Proposition \ref{prop:pm ode} implies that if $(a, b) \in \mathcal{S}_d \times \mathcal{S}_d^n$ is such that 
\[ \tilde{\zerops}(a, b) = 0 \qquad \mbox{and} \qquad \sum_{\beta \in \mathbb{N}^d} \left| (a_\beta, b_\beta) \right| < \infty,  \]
then the graph of the function 
\begin{align*}
\pps: \mathbb{C}^{d} \supseteq B(0, 1) \to \mathbb{C}^{n+1} \\
\pps (\sigma) = \sum_{\beta \in \mathbb{N}^d} \begin{pmatrix}
a_\beta \\ b_\beta
\end{pmatrix}  \sigma^\beta
\end{align*}
is a local unstable manifold for the ODE \eqref{eq:ps ode A g}--\eqref{eq:ps g}. 

\medskip
\noindent
\textsc{Step 2:} We next show that for any sequence $(a, b) \in \mathcal{S}_d \times \mathcal{S}_d^n$ with $\tilde{\zerops}(a, b) = 0$, it holds that 
\begin{equation} \label{eq:blowup proof}
b_\beta = a_\beta (D-\dotp{\lambda, \beta}I)^{-1} D \textbf{1}
\end{equation}
for all $\beta \in \mathbb{N}^d$. 

For $\beta = 0$, the statement \eqref{eq:blowup proof} holds trivially and for $|\beta| = 1$, statement \eqref{eq:blowup proof} follows from Lemma \ref{lem:spectrum ps}.  
For $| \beta| \geq 2$, $\tilde{\zerops}(a, b)_\beta = 0$ is satisfied if and only if 
\[ (\dotp{\lambda_n, \beta}I - \linps) \begin{pmatrix}
a_\beta \\ b_\beta
\end{pmatrix}
 - \begin{pmatrix}
 \tilde{\hops}(a, b)_\beta \\ 0
 \end{pmatrix} = 0,  \]
i.e. if and only if 
\begin{subequations}
\begin{align}
\dotp{\lambda_n, \beta} a_\beta - LP(a_\beta, b_\beta) &= \tilde{\hops}(a, b)_\beta \label{eq:hom eq ps ode 1} \\
a_\beta D \textbf{1} - D b_\beta + \dotp{\lambda_n, \beta} b_\beta &= 0.  \label{eq:hom eq ps ode 2}
\end{align}
\end{subequations}
But since we have assumed that $D-\dotp{\lambda_n, \beta}I$ is invertible for all $\abs{\beta} \geq 1$, \eqref{eq:hom eq ps ode 2} holds 
if and only if \eqref{eq:blowup proof} holds. 

\medskip
\noindent
\textsc{Step 3:} Next, we show that sequence $(a, b) \in \mathcal{S}_d \times \mathcal{S}_d^n$ satisfies $\tilde{\zerops}(a, b) = 0$ if and only if the sequence $a$ satisfies $\zerops (a) = 0$ and the sequence $b$ is given by \eqref{eq:blowup proof}. 

We define the operator $\blowupvectorps: \mathcal{S}_d \to \mathcal{S}_d^n$ via the relation 
\[ (\blowupvectorps (a))_\beta = a_\beta (D-\dotp{\lambda_n, \beta}I)^{-1} D \textbf{1} \qquad \mbox{for} \quad \beta \in \mathbb{N}^d \]
so that the relation \eqref{eq:blowup proof} can be written as $b = \blowupvectorps(a)$. Then substituting \eqref{eq:blowup proof} into \eqref{eq:hom eq ps ode 1} and using the notation \eqref{eq:def Delta n} yields that 
\begin{equation} \label{eq:substitute pm ps ode}
\Delta_n(\dotp{\lambda_n, \beta}) a_\beta - \tilde{\hops}(a, \blowupvectorps (a))_\beta = 0.
\end{equation}
Substituting $b = \blowupvectorps (a)$ in \eqref{eq:script H ps ode} implies that
\[ G\left(\sum_{|\beta| \leq k} a_\beta P(1, (D-\dotp{\lambda_n, \beta}I)^{-1} D \textbf{1}) \sigma^\beta \right) = \sum_{| \beta | \leq k} \tilde{\hops} (a, \blowupvectorps(a))_\beta \sigma^\beta + \mathcal{O}(| \sigma |^{k+1});\]
and comparing this with \eqref{eq:H ps ode} we see that 
\[ \tilde{\hops} (a, \blowupvectorps (a)) = \hops(a). \]
Thus \eqref{eq:substitute pm ps ode} holds if and only if 
\[ \Delta_n(\dotp{\lambda_n, \beta} a_\beta - \hops(a)_\beta = 0, \]
which proves the claim. 
\end{proof}

For Wright's equation
\[ \dot{x}(t) = - \alpha x(t-1)(1+ x(t)) \]
the pseudospectral approximation is given by 
\[ 
\begin{pmatrix}
\dot{y}_0 \\ \dot{y}
\end{pmatrix}
= \begin{pmatrix}
-\alpha y^{(n)} (1+y_0) \\
D y - y_0 D \textbf{1}
\end{pmatrix}
\]
(cf. Example \ref{ex:wright}). 
In the following, we look at the case where the linearisation of the equation around zero has exactly two eigenvalues in the right half of the complex plane, and formulate a zero-finding problem to find the associated unstable manifold.

\begin{corollary} \label{cor:pm_wright_ps}
Consider the ODE 
\begin{equation} \label{eq:ps ode wright}
\begin{pmatrix}
\dot{y}_0 \\ \dot{y}
\end{pmatrix}
= \begin{pmatrix}
-\alpha y^{(n)} (1+y_0) \\
D y - y_0 D \textbf{1}
\end{pmatrix}
\end{equation}
with $y_0 \in \mathbb{R}, \ y \in \mathbb{R}^n$ and parameter $\alpha > 0$. Assume that the linear map 
\[
A_n \begin{pmatrix}
y_0 \\ y
\end{pmatrix}
=
\begin{pmatrix}
- \alpha y^{(n)} \\
Dy - y_0 D \textbf{1}
\end{pmatrix}
\]
has two exactly two eigenvalues $\lambda_{n, +}, \ \lambda_{n, -}$ in the right half of the complex plane, which we collect in a vector $\lambda_n = (\lambda_{n, +}, \lambda_{n, -})$. Moreover, assume that
\[ \dotp{\lambda_n, \beta} \not \in \sigma(D) \]
for all $\beta$ with $|\beta| \geq 1$. 
Define the function 
$\zerops: \mathcal{S}_2 \to \mathcal{S}_2$ as 
\begin{equation} \label{eq:zero_map_wright_ps}
\begin{aligned}
\zerops(x)_\beta = \begin{cases}
x_\beta \qquad &\mbox{if } \beta = 0 \\
x_\beta - \xi_\beta \qquad &\mbox{if } |\beta| = 1 \\
\Delta_n(\dotp{\lambda_n, \beta}) x_\beta + \alpha (x*\blowupscalarps(x))_\beta, \qquad &\mbox{if } | \beta | \geq 2,
\end{cases}
\end{aligned}
\end{equation}
where for $|\beta| = 1$ the constants $\xi_\beta \in \mathbb{C}$ are arbritrary but fixed, 
the operator $\blowupscalarps: \mathcal{S}_2 \to \mathcal{S}_2$ is defined via the relation 
\begin{equation} \label{eq:def_blowup_scalar_ps}
\blowupscalarps (x)_\beta = x_\beta ((D-\dotp{\lambda_n, \beta}I)^{-1}D \textbf{1})_n, \qquad \beta \in \mathbb{N}^2
\end{equation} 
and the function $\Delta_n$ is defined as 
\[
\Delta_n(\lambda) = \lambda + \alpha ((D-\lambda I)^{-1} D \textbf{1})_n.
\]
If $p_n = (p_{n, \beta})_{\beta \in \mathbb{N}^d}$ is a sequence such that $\zerops (p_n) = 0$ and $\sum_{\beta} | p_{n, \beta}| < \infty$, 
then the graph of the function
\[ \pps (\sigma) = \sum_{\beta} p_{n, \beta} \begin{pmatrix}
1 \\ 
(D-\lambda_n I)^{-1} D \textbf{1}
\end{pmatrix} \sigma^\beta\]
is a local unstable manifold around the origin of \eqref{eq:ps ode wright}. 
\end{corollary}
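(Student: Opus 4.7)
The plan is to apply Theorem \ref{prop:parametrization_ps} directly to the pseudospectral ODE \eqref{eq:ps ode wright}, recognizing it as the pseudospectral approximation of Wright's equation \eqref{eq:wright} whose associated functionals $L$ and $G$ are given in \eqref{eq:L G wright}. The corollary then amounts to computing the explicit form that $\Delta_n$ and $\hops$ take for this particular nonlinearity and checking that \eqref{eq:F_n_general} reduces to \eqref{eq:zero_map_wright_ps}.

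First I would exploit that the Chebyshev node $\theta_n = -1$, so for any $(y_0,y) \in \mathbb{R}\times\mathbb{R}^n$ the interpolating polynomial satisfies $P(y_0,y)(0) = y_0$ and $P(y_0,y)(-1) = y_n$. Combined with \eqref{eq:L G wright} this immediately gives $LP(y_0,y) = -\alpha y_n$ and $G(P(y_0,y)) = -\alpha y_n y_0$. Substituting the first of these into the definition \eqref{eq:def Delta n} yields
\[
\Delta_n(\lambda) = \lambda + \alpha\bigl((D-\lambda I)^{-1} D \textbf{1}\bigr)_n,
\]
which is exactly the expression asserted in the corollary.

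Next I would compute $\hops$ from its defining relation \eqref{eq:H ps ode}. Writing $u_\beta := (D-\dotp{\lambda_n,\beta}I)^{-1}D\textbf{1}$ and using $G(P(a,b)) = -\alpha b_n a$ together with $P(1,u_\beta)(0) = 1$ and $P(1,u_\beta)(-1) = (u_\beta)_n$, the left-hand side of \eqref{eq:H ps ode} becomes
\[
-\alpha \Bigl(\sum_{|\beta|\leq k} x_\beta (u_\beta)_n \sigma^\beta\Bigr)\Bigl(\sum_{|\beta|\leq k} x_\beta \sigma^\beta\Bigr) + \mathcal{O}(\sigma^{k+1}).
\]
Recognizing the first factor as $\sum_{\beta} \blowupscalarps(x)_\beta \sigma^\beta$ with $\blowupscalarps$ defined by \eqref{eq:def_blowup_scalar_ps}, and applying the Cauchy product formula \eqref{eq:def convolution}, I read off $\hops(x) = -\alpha\,(x * \blowupscalarps(x))$. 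Substituting this together with the computed $\Delta_n$ into \eqref{eq:F_n_general} produces exactly the map \eqref{eq:zero_map_wright_ps}, so Theorem \ref{prop:parametrization_ps} delivers the claimed parametrization of the local unstable manifold.

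The only point that requires care, and which I would treat as the main obstacle, is verifying the hypotheses of Theorem \ref{prop:parametrization_ps}: namely Hypothesis \ref{hyp:DDE} for the underlying DDE and the non-resonance condition \eqref{eq:nonresoncance_ps} for $\linps$. Hypothesis \ref{hyp:DDE} follows from the assumption $\alpha > 0$ together with the given existence of exactly two unstable eigenvalues $\lambda_{n,\pm}$ (and the fact that $L,G$ in \eqref{eq:L G wright} are trivially bounded linear and smooth with $G(0)=G'(0)=0$). For the non-resonance condition, the assumption $\dotp{\lambda_n,\beta} \notin \sigma(D)$ together with Lemma \ref{lem:spectrum ps} implies that $\dotp{\lambda_n,\beta} \in \sigma(\linps)$ if and only if $\Delta_n(\dotp{\lambda_n,\beta}) = 0$, so the required non-resonance is equivalent to the implicit solvability condition $\Delta_n(\dotp{\lambda_n,\beta}) \neq 0$ for $|\beta|\geq 2$ already needed to recursively solve $\zerops(p_n) = 0$ for $p_{n,\beta}$.
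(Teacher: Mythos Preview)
Your proposal is correct and follows essentially the same route as the paper's proof: identify the pseudospectral ODE \eqref{eq:ps ode wright} as the approximation of Wright's equation with the functionals $L,G$ from \eqref{eq:L G wright}, compute $\Delta_n$ and $\hops$ explicitly (the paper does the same formal Cauchy-product computation you outline), and then invoke Theorem~\ref{prop:parametrization_ps}. Your treatment is in fact slightly more careful than the paper's, which simply writes ``from here the claim follows from Proposition~\ref{prop:parametrization_ps}'' without discussing the non-resonance hypothesis \eqref{eq:nonresoncance_ps}; your observation that, under the standing assumption $\dotp{\lambda_n,\beta}\notin\sigma(D)$, Lemma~\ref{lem:spectrum ps} reduces \eqref{eq:nonresoncance_ps} to $\Delta_n(\dotp{\lambda_n,\beta})\neq 0$ is a useful clarification.
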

\begin{proof}
Since Wright's equation \eqref{eq:wright} is of the form \eqref{eq:DDE} with functionals $L, \, G$ given by
\[
L \varphi = - \alpha \varphi(-1), \qquad G(\varphi) = -\alpha \varphi(-1) \varphi(0),
\]
the pseudospectral ODE \eqref{eq:ps ode wright} is of the form \eqref{eq:ps ode A g} with the same $L$ and $G$. Hence in this case, the function $\Delta_n$ is given by 
\[
\Delta_n(\lambda) = \lambda + \alpha ((D-\lambda)^{-1} D \textbf{1})_n. 
\]
Moreover, we (formally) compute that 
\begin{align*}
G\left(\sum_{\substack{ \beta \in \mathbb{N}^2 }} x_\beta P(1, (D-\dotp{\lambda_n, \beta}I)^{-1}D\textbf{1})) \sigma^\beta \right) &= - \alpha \left( \sum_{\beta \in \mathbb{N}^2} x_\beta((D-\dotp{\lambda_n, \beta}I)^{-1}D\textbf{1})_n \sigma^\beta \right) \left( \sum_{\beta \in \mathbb{N}^2} x_\beta \sigma^\beta \right) \\
& = - \alpha \left( \sum_{\beta \in \mathbb{N}^2} r_n(x)_\beta \sigma^\beta \right) \left( \sum_{\beta \in \mathbb{N}^2} x_\beta \sigma^\beta \right) \\
& = - \alpha \sum_{\beta \in \mathbb{N}^2} \left( \sum_{\beta_1 + \beta_2 = \beta} r_n(x)_{\beta_1} x_{\beta_2} \right) \sigma^\beta \\
& = - \alpha (x \ast r_n(x))_\beta. 
\end{align*}
Hence $\hops: \mathcal{S}_2 \to \mathcal{S}_2$ in \eqref{eq:H ps ode} is in this case given by 
\[
\hops (x)_\beta = - \alpha (x \ast r_n(x))_\beta.
\]
and from here the claim follows from Proposition \ref{prop:parametrization_ps}. 
\end{proof}

The expression for $\zerops$ in \eqref{eq:F_n_general} involves the expressions $(D - \dotp{\lambda_n, \beta}I)^{-1}$ (namely in the expression for $\Delta_n(\dotp{\lambda_n, \beta})$ and the expression for $H_n$); so in order for the map $\zerops$ to be well-defined, we should first verify that the matrices $D - \dotp{\lambda_n, \beta}I$ are in fact invertible for all $\beta \in \mathbb{N}^d$. 
Lemma \ref{lem:resolvent_estimate_ode} and Corollary \ref{cor:resolvent_D} describe how, for fixed discretization index $n \in \mathbb{N}$, this can be checked by a combination of analytical and numerical methods. 

In the following, we denote by $\lambda_n = (\lambda_{n, 1}, \ldots, \lambda_{n, d})$ a vector of consisting of the eigenvalues of $\linps$ that are in the right half of the complex plane, and with slight abuse of notation we write 
\begin{equation} \label{eq:def_min_re_lambda}
\min(\Re(\lambda_n)) := \min \{ \Re(\lambda_{n, 1}), \ldots, \Re(\lambda_{n, d}) \}. 
\end{equation}

\begin{lemma} \label{lem:resolvent_estimate_ode}
Consider the DDE \eqref{eq:DDE} satisfying Hypothesis \ref{hyp:DDE} and, for fixed $n \in \mathbb{N}$, let the matrix $\linps$ be as defined in \eqref{eq:ps A}. 
Denote by $\lambda_n = (\lambda_{1, n}, \ldots, \lambda_{d, n})$ a vector of consisting of the eigenvalues of $\linps$ that are in the right half of the complex plane. 
Additionally, fix $\epsilon > 0$ and let $M \in \mathbb{N}$ be such that 
\begin{equation} \label{eq:M_treshhold_general}
M >  \frac{\norm{D}+\epsilon}{\min(\Re(\lambda_n))}. 
\end{equation}
Then 
\[
D - \dotp{\lambda_n, \beta}I \quad \mbox{ is invertible for all } \beta \in \mathbb{N}^d \mbox{ with } \abs{\beta} \geq M
\]
and additionally
\begin{align} \label{eq:resolvent_estimate_ode}
\abs*{((D- \dotp{\lambda_n, \beta} I)^{-1} D \textbf{1})_n} \leq \frac{\norm{D} \norm{\textbf{1}}}{\epsilon} \quad \quad \mbox{ for all } \beta \in \mathbb{N}^d \mbox{ with } \abs{\beta} \geq M. 
\end{align}
\end{lemma}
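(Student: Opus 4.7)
The plan is to exploit the fact that for large $\abs{\beta}$, the complex number $\dotp{\lambda_n, \beta}$ lies far outside a disk containing the spectrum of $D$, so that $D - \dotp{\lambda_n,\beta}I$ is invertible and its inverse is controlled by a standard Neumann-series estimate.

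First I would observe that since every $\lambda_{j,n}$ lies in the right half plane, its real part satisfies $\Re(\lambda_{j,n}) \geq \min(\Re(\lambda_n)) > 0$, and hence for any $\beta \in \nn^d$,
\[
\abs{\dotp{\lambda_n,\beta}} \;\geq\; \Re \dotp{\lambda_n,\beta} \;=\; \sum_{j=1}^d \beta_j \Re(\lambda_{j,n}) \;\geq\; \abs{\beta}\cdot \min(\Re(\lambda_n)).
\]
Combined with the hypothesis \eqref{eq:M_treshhold_general}, this yields $\abs{\dotp{\lambda_n,\beta}} \geq M \min(\Re(\lambda_n)) > \norm{D} + \epsilon$ whenever $\abs{\beta} \geq M$. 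Since the spectral radius of $D$ is bounded by $\norm{D}$, this strict inequality immediately gives that $\dotp{\lambda_n,\beta} \notin \sigma(D)$, so $D - \dotp{\lambda_n,\beta}I$ is invertible.

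Next I would bound the inverse via the Neumann series expansion
\[
(D - \dotp{\lambda_n,\beta}I)^{-1} = -\frac{1}{\dotp{\lambda_n,\beta}} \sum_{k=0}^\infty \left( \frac{D}{\dotp{\lambda_n,\beta}} \right)^{\!k},
\]
which converges since $\norm{D}/\abs{\dotp{\lambda_n,\beta}} < 1$. Summing the geometric series gives the operator norm bound
\[
\norm{(D - \dotp{\lambda_n,\beta}I)^{-1}} \;\leq\; \frac{1}{\abs{\dotp{\lambda_n,\beta}} - \norm{D}} \;\leq\; \frac{1}{(\norm{D}+\epsilon) - \norm{D}} \;=\; \frac{1}{\epsilon}.
\]

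Finally, for the stated componentwise bound, I would use submultiplicativity together with the fact that in the relevant norm $\abs{x_n} \leq \norm{x}$ to write
\[
\abs*{((D-\dotp{\lambda_n,\beta}I)^{-1} D\textbf{1})_n} \;\leq\; \norm{(D-\dotp{\lambda_n,\beta}I)^{-1}}\cdot \norm{D}\cdot \norm{\textbf{1}} \;\leq\; \frac{\norm{D}\norm{\textbf{1}}}{\epsilon},
\]
which is the desired estimate. No step here looks genuinely difficult; the only mild subtlety is matching the norm on $\mathbb{C}^n$ (so that the coordinate projection is $1$-Lipschitz) with the operator norm convention already used for $D$, but this is a consistency check rather than a real obstacle.
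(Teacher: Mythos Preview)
Your proposal is correct and follows essentially the same approach as the paper: both arguments use the lower bound $\abs{\dotp{\lambda_n,\beta}} \geq \abs{\beta}\min(\Re(\lambda_n)) > \norm{D}+\epsilon$ to justify the Neumann series for $(D-\dotp{\lambda_n,\beta}I)^{-1}$ and then sum the resulting geometric series. The only cosmetic difference is that you first bound the operator norm $\norm{(D-\dotp{\lambda_n,\beta}I)^{-1}} \leq 1/\epsilon$ and then apply submultiplicativity, whereas the paper expands $(D-\dotp{\lambda_n,\beta}I)^{-1}D\mathbf{1}$ as a series of vectors and bounds it term by term; the resulting estimates are identical.
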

\begin{proof}
Let $\beta \in \mathbb{N}^d$ be such that $\abs{\beta} \geq M$; we first derive a lower bound for the quantity $\abs{\dotp{\lambda_n, \beta}}$. To do so, we first estimate that
\begin{align*}
\Re(\dotp{\lambda_n, \beta}) &= \Re(\lambda_{n, 1}) \beta_1 + \ldots + \Re(\lambda_{n, d}) \beta_d \\
& \geq \min(Re(\lambda_n)) (\beta_1 + \ldots + \beta_d) \\
& = \min(Re(\lambda_n)) \abs{\beta}  \\
& \geq \min(Re(\lambda_n)) M.
\end{align*}
Together with the estimate $\abs{\mu} \geq \Re(\mu)$ for $\mu \in \mathbb{C}$, this implies that 
\begin{equation} \label{eq:estimate_lambeta}
\abs{\dotp{\lambda_n, \beta}} \geq \min(Re(\lambda_n)) M \qquad \mbox{if } | \beta | \geq M. 
\end{equation}
Thus, if $M$ satisfies the inequality \eqref{eq:M_treshhold_general} and $\abs{\beta} \geq M$, then \eqref{eq:estimate_lambeta} implies that
\begin{align*}
\norm*{\frac{D}{\dotp{\lambda_n, \beta}}} &\leq \frac{\norm{D}}{\min(Re(\lambda_n)) M}  \\
& \leq \frac{\norm{D}}{\norm{D} + \epsilon} \\ 
& < 1.
\end{align*} 
From here the Neumann series imply that the matrix
\[ 
\dotp{\lambda_n, \beta}I - D = \dotp{\lambda_n, \beta} \left(I - \frac{D}{\dotp{\lambda_n, \beta}} \right)
\]
is invertible with inverse
\begin{align*}
\left(\dotp{\lambda_n, \beta}I - D \right)^{-1} &= \frac{1}{\dotp{\lambda_n, \beta}} \left(I - \frac{D}{\dotp{\lambda_n, \beta}} \right)^{-1} \\
& = \sum_{k = 0}^\infty \frac{D^k}{\dotp{\lambda_n, \beta}^{k+1}}. 
\end{align*}
This in particular implies that 
\begin{align*}
\abs*{ \left(\left(\dotp{\lambda_n, \beta}I - D \right)^{-1} D \textbf{1} \right)_n} 
& \leq \norm{ \left(\dotp{\lambda_n, \beta}I - D \right)^{-1} D \textbf{1}} \\
& \leq \norm{\sum_{k = 0}^\infty \frac{D^{k+1} \textbf{1}}{\dotp{\lambda_n, \beta}^{k+1}}} \\
& \leq \sum_{k = 0}^\infty \frac{\norm{D}^{k+1} \norm{\textbf{1}}}{\abs{\dotp{\lambda_n, \beta}}^{k+1}}  \\
& \leq \sum_{k = 0}^\infty \frac{\norm{D}^{k+1} \norm{\textbf{1}}}{(\min(\Re(\lambda_n)) M)^{k+1}} 
\end{align*}
where in the last step we used \eqref{eq:estimate_lambeta}. We can rewrite the last quantity using the 
geometric series 
\[
\sum_{k = 0}^\infty r^{k+1} = \frac{r}{1-r}, \qquad \abs{r} < 1
\]
and obtain that
\begin{align*}
\abs*{ \left(\left(\dotp{\lambda_n, \beta}I - D \right)^{-1} D \textbf{1} \right)_n} 
& \leq \frac{\frac{\norm{D}}{\min(\Re(\lambda_n))M}}{1-\frac{\norm{D}}{\min(\Re(\lambda_n))M}} \norm{\textbf{1}} \\
& = \frac{\norm{D}}{\min(\Re(\lambda_n))M-\norm{D}} \norm{\textbf{1}} \\
& \leq \frac{\norm{D} \norm{\textbf{1}}}{\varepsilon}
\end{align*}
where in the last step we used \eqref{eq:M_treshhold_general}. This proves \eqref{eq:resolvent_estimate_ode}.
\end{proof}

\begin{corollary} \label{cor:resolvent_D}
Consider the DDE \eqref{eq:DDE} satisfying Hypothesis \ref{hyp:DDE} and, for fixed $n \in \mathbb{N}$, let the matrix $\linps$ be as defined in \eqref{eq:ps A}. 
Denote by $\lambda_n = (\lambda_{1, n}, \ldots, \lambda_{d, n})$ a vector of consisting of the eigenvalues of $\linps$ that are in the right half of the complex plane. 
Fix $\epsilon > 0$ and let $M \in \mathbb{N}$ be such that \eqref{eq:M_treshhold_general} holds.  

If for each $\beta \in \mathbb{N}^d$ with $\abs{\beta} < M$, the equation
\begin{equation}
\label{eq:invertible_condition}
(D - \dotp{\lambda_n, \beta}I) v = D \textbf{1}
\end{equation}
has a unique solution $v$. Then the matrices $D - \dotp{\lambda_n, \beta}I$ are invertible for all $\beta \in \mathbb{N}^d$ and hence the map $\zerops$ in \eqref{eq:F_n_general} is well-defined. 
\end{corollary}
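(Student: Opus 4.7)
The plan is to split the set of multi-indices $\beta \in \mathbb{N}^d$ into two regimes separated by the threshold $M$ and handle each regime with a different tool. For the ``tail'' regime $|\beta| \geq M$, invertibility of $D - \langle\lambda_n,\beta\rangle I$ is already a byproduct of Lemma \ref{lem:resolvent_estimate_ode}: the Neumann series argument in its proof constructs the inverse explicitly under precisely the hypothesis \eqref{eq:M_treshhold_general}, so no additional work is needed in this range.

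For the ``head'' regime $|\beta| < M$, I would use the hypothesis of the corollary directly, combined with the following elementary linear-algebra observation. If $A$ is a square matrix over $\mathbb{C}$ and the equation $Av = b$ admits a \emph{unique} solution for some single right-hand side $b$, then $A$ is invertible: indeed, any $w \in \ker(A)$ would give $A(v+w) = b$, whence $w = 0$ by uniqueness, so $\ker(A) = \{0\}$ and $A$ is invertible since it acts on a finite-dimensional space. Applying this with $A = D - \langle\lambda_n,\beta\rangle I$ and $b = D\mathbf{1}$ yields invertibility for each of the finitely many $\beta$ with $|\beta| < M$.

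To conclude well-definedness of $\zerops$, I would recall that the expression \eqref{eq:F_n_general} involves $\Delta_n(\langle\lambda_n,\beta\rangle)$, whose definition \eqref{eq:def Delta n} requires inverting $D - \langle\lambda_n,\beta\rangle I$, and $\hops(x)_\beta$, whose definition \eqref{eq:H ps ode} likewise involves these inverses through the interpolant $P(1, (D - \langle\lambda_n,\beta\rangle I)^{-1} D\mathbf{1})$. Having established invertibility for all $\beta \in \mathbb{N}^d$, each coefficient of $\zerops(x)$ is a well-defined element of $\mathbb{C}$, so $\zerops$ is a well-defined map $\mathcal{S}_d \to \mathcal{S}_d$.

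There is no real obstacle here; the point of the corollary is practical, namely that checking invertibility of $D - \langle\lambda_n,\beta\rangle I$ for all $\beta \in \mathbb{N}^d$ reduces to a finite number of numerically tractable linear-system checks (one for each $\beta$ with $|\beta| < M$), since the tail estimate from Lemma \ref{lem:resolvent_estimate_ode} handles the rest analytically. The only subtlety worth flagging in the write-up is the kernel argument converting ``unique solvability for one specific $b$'' into full invertibility — a one-line remark that I would include explicitly to keep the logic transparent.
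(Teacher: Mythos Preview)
Your proposal is correct and follows exactly the same two-regime split as the paper's proof: invoke Lemma \ref{lem:resolvent_estimate_ode} for $|\beta| \geq M$ and the hypothesis for $|\beta| < M$. The paper's version is terser and does not spell out either the kernel argument or the well-definedness check for $\zerops$; your added detail on these points is welcome but not a departure in strategy.
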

\begin{proof}
If \eqref{eq:invertible_condition} has a unique solution for each $\beta \in \mathbb{N}^d$ with $\abs{\beta} < M$, then $D-\dotp{\lambda_n, \beta}I$ is invertible for all $\beta \in \mathbb{N}^d$ with $\abs{\beta} < M$. By Lemma \ref{lem:resolvent_estimate_ode}, the map $D-\dotp{\lambda_n, \beta}I$ is invertible for all $\beta \in \mathbb{N}^d$ with $\abs{\beta} \geq M$. Hence $D - \dotp{\lambda_n, \beta}I$ is invertible for all $\beta \in \mathbb{N}^d$ and the claim follows. 
\end{proof}

For a fixed index $n \in \mathbb{N}$ and a fixed vector $\lambda_n \in \mathbb{N}^d$, checking whether the equation \eqref{eq:invertible_condition} is invertible for all $\abs{\beta} < M$ is a finite computatation that canbe rigorously done in, for example, IntLab. For the pseudospectral approximation to Wright's equation with parameter values $\alpha = \placealpha$ and $n = \placen$,
we will perform this computation in Lemma \ref{lem:well-defined_cap} at the end of Subsection \ref{sec:validation_wright_ps}.

\medskip

Lemma \ref{lem:resolvent_estimate_ode} also directly provides a bound on the operator norm of the operator
\begin{equation*} 
\begin{aligned}
\blowupscalarps&: \mathcal{S}_d \to \mathcal{S}_d \\
\blowupscalarps (x)_\beta &= x_\beta ((D-\dotp{\lambda_n, \beta}I)^{-1}D \textbf{1})_n,
\end{aligned}
\end{equation*}
which we will use in Section \ref{sec:validation_wright_ps}:

\begin{lemma} \label{lem:R_n_norm}
Consider the DDE \eqref{eq:DDE} satisfying Hypothesis \ref{hyp:DDE} and, for fixed $n \in \mathbb{N}$, let the matrix $\linps$ be as defined in \eqref{eq:ps A}. 
Denote by $\lambda_n = (\lambda_{1, n}, \ldots, \lambda_{d, n})$ a vector of consisting of the eigenvalues of $\linps$ that are in the right half of the complex plane. 

Suppose that for fixed $\epsilon >0$ and $M \in \mathbb{N}$ satisfying \eqref{eq:M_treshhold_general}, the equation \eqref{eq:invertible_condition} has a unique solution for all $\abs{\beta} \leq M$. 
Then, the map $\blowupscalarps: \mathcal{S}_d \to \mathcal{S}_d$ defined as 
\[
\blowupscalarps (x)_\beta = x_\beta ((D-\dotp{\lambda_n, \beta}I)^{-1}D \textbf{1})_n
\]
is well-defined and it holds that
\begin{equation} \label{eq:R_n_norm}
\norm{\blowupscalarps} \leq \max_{\abs{\beta} < M} \left \{ \abs{((D-\dotp{\lambda_n, \beta}I)^{-1}D\textbf{1})_n}, \frac{\norm{D} \norm{\textbf{1}}}{\epsilon} \right \}.
\end{equation}  
\end{lemma}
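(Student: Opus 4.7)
The plan is to reduce the lemma to a combination of Corollary \ref{cor:resolvent_D} (for well-definedness) and Lemma \ref{lem:resolvent_estimate_ode} (for the tail estimate), exploiting the fact that $\blowupscalarps$ is a \emph{diagonal} multiplication operator on $\mathcal{S}_d$.

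First I would establish well-definedness. Under the hypothesis that \eqref{eq:invertible_condition} has a unique solution for every $\abs{\beta} < M$, Corollary \ref{cor:resolvent_D} guarantees that $D - \dotp{\lambda_n, \beta} I$ is invertible for every $\beta \in \mathbb{N}^d$. In particular the scalars
\[
c_\beta := \bigl((D - \dotp{\lambda_n, \beta} I)^{-1} D \textbf{1}\bigr)_n, \qquad \beta \in \mathbb{N}^d,
\]
are all well-defined, so $\blowupscalarps(x)_\beta = c_\beta x_\beta$ defines a bona fide map $\mathcal{S}_d \to \mathcal{S}_d$.

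Next I would compute the induced operator norm with respect to the $\ell^1$ norm \eqref{eq:ell1_norm}. Because $\blowupscalarps$ acts diagonally, for any $x \in \ell^1$,
\[
\norm{\blowupscalarps(x)}_{\ell^1} \;=\; \sum_{\beta \in \mathbb{N}^d} \abs{c_\beta}\, \abs{x_\beta} \;\leq\; \Bigl(\sup_{\beta \in \mathbb{N}^d} \abs{c_\beta}\Bigr) \norm{x}_{\ell^1},
\]
so $\norm{\blowupscalarps}_{\ell^1} \leq \sup_\beta \abs{c_\beta}$. (In fact equality holds, by testing on unit basis vectors, but only the upper bound is needed here.)

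It then suffices to bound $\sup_\beta \abs{c_\beta}$ by the right hand side of \eqref{eq:R_n_norm}. I would split the supremum according to whether $\abs{\beta} < M$ or $\abs{\beta} \geq M$. For the finitely many indices with $\abs{\beta} < M$, the quantity $\abs{c_\beta}$ is bounded by $\max_{\abs{\beta} < M} \abs{c_\beta}$ by definition. For indices with $\abs{\beta} \geq M$, the threshold condition \eqref{eq:M_treshhold_general} is in force, so Lemma \ref{lem:resolvent_estimate_ode} applies and yields the uniform bound $\abs{c_\beta} \leq \tfrac{\norm{D}\norm{\textbf{1}}}{\epsilon}$. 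Taking the maximum of the two bounds gives exactly \eqref{eq:R_n_norm}, completing the proof. There is no substantive obstacle here; the only thing to be careful about is keeping the diagonal interpretation straight and invoking the correct sup-bound for multiplication operators on $\ell^1$.
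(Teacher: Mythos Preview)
Your proposal is correct and follows essentially the same argument as the paper: both recognize $\blowupscalarps$ as a diagonal multiplication operator on $\ell^1$, bound its norm by $\sup_\beta \abs{c_\beta}$, and then split this supremum into the finite range $\abs{\beta} < M$ and the tail $\abs{\beta} \geq M$, invoking Lemma~\ref{lem:resolvent_estimate_ode} for the latter. Your only addition is the explicit appeal to Corollary~\ref{cor:resolvent_D} for well-definedness, which the paper leaves implicit.
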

\begin{proof}
Let $v \in \mathcal{S}_d$, then 
\begin{align*}
\norm{\blowupscalarps (v)} 
&= \sum_{\beta \in \mathbb{N}^d} \abs{((D-\dotp{\lambda_n, \beta}I)^{-1}D\textbf{1})_n v_\beta } \\
& \leq \max_{\beta \in \mathbb{N}^d} \left \{ \abs{((D-\dotp{\lambda_n, \beta}I)^{-1}D\textbf{1})_n} \right \} \sum_{\beta \in \mathbb{N}^2} \abs{v_\beta} \\
& = \max_{\beta \in \mathbb{N}^d} \left \{ \abs{((D-\dotp{\lambda_n, \beta}I)^{-1}D\textbf{1})_n} \right \}   \norm{v}. 
\end{align*}
Lemma \ref{lem:resolvent_estimate_ode} implies that
\[ \abs{((D-\dotp{\lambda_n, \beta}I)^{-1}D\textbf{1})_n} \leq \frac{\norm{D} \norm{\textbf{1}}}{\epsilon}, \qquad \mbox{if } \abs{\beta} \geq M \]
and hence 
\[ \max_{\beta \in \mathbb{N}^2} \left \{ \abs{((D-\dotp{\lambda_n, \beta}I)^{-1}D\textbf{1})_n} \right \}  = \max_{\abs{\beta} < M} \left \{ \abs{((D-\dotp{\lambda_n, \beta}I)^{-1}D\textbf{1})_n}, \frac{\norm{D} \norm{\textbf{1}}}{\epsilon} \right \}, \]
which proves the claim. 
\end{proof}

\section{Computer Assisted Proof for Wright's equation}
\label{sec:CAP}

In this section, we provide a computer assisted algorithm to compute the distance between the unstable manifold of Wright's equation \eqref{eq:wright} and the unstable manifold of its pseudosepctral approximation \eqref{eq:ps ode wright}. In the accompanying code on \github, we execute this scheme for $\alpha = \placealpha$ and discretization index $n = \placen$, leading to Theorem \ref{thm:distance_intro} (cf. Theorem \ref{thm:distance_conclusion} below). The proof is based upon the Radii Polynomial Theorem; cf. Theorem \ref{thm:radii_polynomial} in Appendix \ref{appendix:contraction_mapping}. For all numerical computations in this paper, we use the MatLab interval arithmetic library IntLab \cite{Ru99a}. 

\subsection{{Computing an approximation of the unstable manifold in the pseudospectral ODE}}
\label{sec:approximatezero_wright_ps}

As a preparatory step, 
we compute interval enclosures of \emph{all} eigenvalues of the linearization of \eqref{eq:ps ode wright} around the origin, i.e. we compute interval enclosures of the eigenvalues of the linear map
\begin{equation} \label{eq:lin_wright_ps}
\begin{pmatrix}
y_0 \\ y
\end{pmatrix}
\mapsto
\begin{pmatrix}
- \alpha y^{(n)} \\
 Dy - y_0 D \textbf{1}
\end{pmatrix} 
\end{equation}
Lemma \ref{lem:spectrum ps} implies that if a complex number $\lambda$ is a zero of the map 
\begin{equation} \label{eq:ce_wright_ps}
\Delta_n(z) = z + \alpha ((D - z I)^{-1} D \textbf{1})_n
\end{equation}
(in which case $D - \lambda I$ is in particular invertible), then $\lambda$ is an eigenvalue of the map \eqref{eq:lin_wright_ps}. In particular, if the function $\Delta_n$ has exactly $n+1$ zeros, then all eigenvalues of \eqref{eq:lin_wright_ps} correspond to zeroes of $\Delta_n$ (i.e. there is no eigenvalue $\lambda$ of \eqref{eq:lin_wright_ps} for which $D - \lambda I$ is not invertible). So if we obtain interval enclosures of the zeroes of $\Delta_n$, and check that there are exactly $n+1$ such zeroes, then we automatically obtain interval enclosures of the eigenvalues of \eqref{eq:lin_wright_ps}. 

We obtain interval enclosures of the zeroes of $\Delta_n$ by first computing its approximate zeroes, i.e. we compute points $\hat{z}$ such that $\Delta_n(\hat{z})$ is small (in practice, Lemma \ref{lem:spectrum ps} implies that numerically computed eigenvalues of \eqref{eq:lin_wright_ps} are good candidates for these approximate zeroes). 
Then for each approximate zero $\hat{z}$ of $\Delta_n$, we use the Radii Polynomial Theorem in Lemma \ref{thm:contraction_mapping_scalar} to obtain an interval enclosure of a `true' zero of $\Delta_n$ close to $\hat{z}$. 
In applying Lemma \ref{thm:contraction_mapping_scalar}, we use that
\begin{align*}
\Delta_n'(z) & = 1 + \alpha\left((D - zI)^{-2}D \mathbf{1}\right)_n \\
\Delta_n''(z) & = 2\alpha\left((D - zI)^{-3}D \mathbf{1}\right)_n. 
\end{align*}
For the case where $\alpha = \placealpha$ and $n = \placen$, computing enclosures of \emph{all} eigenvalues of \eqref{eq:lin_wright_ps} in particular shows that there are exactly two eigenvalues in the right half of the complex plane: 

\begin{lemma} \label{lem:eigenvalues_ps_explicit}
For $\alpha = \placealpha$ and $n = \placen$, the linear map \eqref{eq:lin_wright_ps} has exactly two eigenvalues in the right half of the complex plane, which we denote by $\lambda_{n,+}$ and $\lambda_{n, -}$ and which are enclosed in the intervals
\begin{align*}
\lambda_{n, +} & \in [0.172816002828147 + 1.673686413740504i, 0.172816002828167 + 1.673686413740524i] \\
\lambda_{n, -} & \in [0.172816002828147 - 1.673686413740504i, 0.172816002828167 - 1.673686413740524i]. 
\end{align*}
\end{lemma}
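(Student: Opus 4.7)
The plan is to apply Lemma \ref{lem:spectrum ps} together with the scalar Radii Polynomial Theorem (Lemma \ref{thm:contraction_mapping_scalar}) to rigorously enclose every zero of the characteristic function
\[
\Delta_n(z) = z + \alpha\bigl((D - zI)^{-1} D \mathbf{1}\bigr)_n,
\]
instantiated at $\alpha = 2$ and $n = 10$. By Lemma \ref{lem:spectrum ps}, once we know $D - z I$ is invertible, zeros of $\Delta_n$ are in bijection with eigenvalues of the linearization \eqref{eq:lin_wright_ps}. The linear map \eqref{eq:lin_wright_ps} acts on an $(n+1)$-dimensional space, so it has exactly $n+1 = 11$ eigenvalues counted with multiplicity; if we can rigorously enclose $11$ distinct zeros of $\Delta_n$, we have found all eigenvalues of \eqref{eq:lin_wright_ps}.

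First I would numerically diagonalize the matrix in \eqref{eq:lin_wright_ps} using MatLab's standard routines to produce floating-point approximations $\hat z_1, \dotsc, \hat z_{11}$ of the eigenvalues. For each $\hat z_j$, I would then apply Lemma \ref{thm:contraction_mapping_scalar} to the analytic function $\Delta_n$, using the derivative and second derivative formulas
\[
\Delta_n'(z) = 1 + \alpha\bigl((D - zI)^{-2} D \mathbf{1}\bigr)_n,\qquad
\Delta_n''(z) = 2\alpha\bigl((D - zI)^{-3} D \mathbf{1}\bigr)_n,
\]
all evaluated using IntLab interval arithmetic. Concretely, I would compute interval bounds on $\Delta_n(\hat z_j)$, on $1/\Delta_n'(\hat z_j)$ (checking that $\Delta_n'(\hat z_j)$ is rigorously bounded away from zero), and on $\sup_{z \in B_r(\hat z_j)} \abs{\Delta_n''(z)}$ for a trial radius $r$, then verify the radii polynomial inequality required by Lemma \ref{thm:contraction_mapping_scalar}. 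This yields an interval $B_{r_j}(\hat z_j)$ containing a unique zero $z_j$ of $\Delta_n$.

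Next I would verify that the eleven enclosing balls $B_{r_j}(\hat z_j)$ are pairwise disjoint, so that the eleven enclosed zeros $z_1, \dotsc, z_{11}$ are genuinely distinct; this accounts for all $11$ eigenvalues of \eqref{eq:lin_wright_ps} and in particular rules out any eigenvalue $\lambda$ with $D - \lambda I$ singular, since such a $\lambda$ would give a twelfth eigenvalue. Finally I would read off the signs of $\Re(z_j)$ from the interval enclosures: I expect exactly two to have strictly positive real part, appearing as a complex-conjugate pair, and would report the corresponding intervals — which, after numerical computation, should match the stated enclosures of $\lambda_{n,+}$ and $\lambda_{n,-}$ to roughly $14$ digits of precision.

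The main obstacle is not the contraction analysis, which is standard, but the global counting step: one must be sure no eigenvalue is missed. This is why the argument is organized around enclosing all $n+1 = 11$ eigenvalues rather than only the two in the right half-plane. A secondary concern is that the numerical approximations $\hat z_j$ must be accurate enough that the resulting radii polynomial is verifiably negative at a small $r$, and that the enclosing balls do not overlap; if needed this can be addressed by a Newton refinement of $\hat z_j$ before entering the validated step.
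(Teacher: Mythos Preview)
Your proposal is correct and follows essentially the same approach as the paper: the paper also enumerates all $n+1=11$ eigenvalues by taking numerical eigenvalues of \eqref{eq:lin_wright_ps} as approximate zeros of $\Delta_n$, validating each via Lemma~\ref{thm:contraction_mapping_scalar} with the same formulas for $\Delta_n'$ and $\Delta_n''$, and then invoking the counting argument (if $\Delta_n$ has $n+1$ distinct validated zeros, these exhaust the spectrum and no eigenvalue with $D-\lambda I$ singular can occur). Your explicit mention of checking pairwise disjointness of the enclosing balls makes precise what the paper leaves slightly implicit.
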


\medskip

Having obtained interval enclosures of the eigenvalues of \eqref{eq:lin_wright_ps}, we next construct an approximate zero of the map $\zerops$ defined in \eqref{eq:zero_map_wright_ps}, i.e. we construct a sequence $\hat{x} \in \mathcal{S}_2$ such that $\norm{\zerops (\hat{x})}$ is small. The identity \eqref{eq:zero_map_wright_ps} implies that a sequence $x \in \mathcal{S}_2$ satisfies $\zerops(x) = 0$ if and only if the equalities
\begin{subequations}
\begin{equation} \label{eq:recursive_xhat_1}
\begin{aligned}
x_\beta = 0 \qquad &\mbox{for } \beta = 0 \\
x_\beta = \xi_\beta \qquad &\mbox{for } |\beta| = 1
\end{aligned}
\end{equation}
and 
\begin{equation} \label{eq:recursive_xhat_2}
x_\beta = \alpha \Delta_n(\dotp{\lambda_n, \beta})^{-1} (x \ast \blowupscalarps (x))_\beta, \qquad \mbox{for } | \beta | \geq 2
\end{equation}
hold. 
\end{subequations}
For fixed $\beta$, the right hand side of the equality \eqref{eq:recursive_xhat_2} only involves $x_{\beta'}$ with $|\beta'| < |\beta|$ and hence we can use it as a recursive relation. 
Hence, we can construct a sequence $\hat{x} \in \mathcal{S}_2$ by: 
\begin{itemize}
\item First fixing scalars $\xi_\beta$ for $|\beta| = 1$ and a natural number $N \in \mathbb{N}$; 
\item For $|\beta| \leq N$, defining the coefficients $\hat{x}_\beta$ via the relations \eqref{eq:recursive_xhat_1}--\eqref{eq:recursive_xhat_2};
\item For $| \beta | > N$, defining $\xi_\beta = 0$. 
\end{itemize}

\begin{remark} \label{rem:xhat}
For $\alpha  = \placealpha$, $n = \placen$ and $N = \placeN$, the above procedure leads to a sequence $\hat{x}$ whose explicit coefficients can be found on the accompanying code on \github. 
\end{remark}

\subsection{Validating the unstable manifold in the pseudospectral ODE}
\label{sec:validation_wright_ps}

We next use the Radii Polynomial Theorem to prove that the map $F_n$ defined in \eqref{eq:zero_map_wright_ps} has a true zero in a neighbourhood of the approximate solution $\hat{x}$.

From \eqref{eq:zero_map_wright_ps}, we compute that
\begin{equation} \label{eq:DF_n}
\begin{aligned}
\left(D\zerops (x)v\right)_\beta = 
\begin{cases}
v_\beta \qquad &\mbox{if } | \beta | \leq 1, \\
\Delta_n (\dotp{\lambda_n, \beta})v_\beta + \alpha( \blowupscalarps (x)*v + \blowupscalarps (v)*x)_\beta \qquad &\mbox{if } \left| \beta \right| \geq 2 ,
\end{cases}
\end{aligned}
\end{equation}
for $x, v \in \mathcal{S}_d$. From here, we compute a numerical approximation of $\pi^N DF_n(\hat{x}) \pi^N$ and denote the result by $\numder_n$. We denote by $\numinv_n$ a numerical a numerical inverse of $\numder_n$ and define the operators $A_n, A_n^\dagger: \mathcal{S}_2 \to \mathcal{S}_2$ as
\begin{subequations}
\begin{align}
\label{eq:A_n}
(A_nv)_\beta & = 
\begin{cases}
(\numinv_n v)_\beta & \abs{\beta} \leq N, \\
\Delta_n(\dotp{\lambda_n, \beta})^{-1}v_\beta & \abs{\beta} > N,
\end{cases} \\
\label{eq:A_n_dagger}
(A_n^\dagger v)_\beta & = 
\begin{cases}
(\numder_n v)_\beta & \abs{\beta} \leq N, \\
\Delta_n(\dotp{\lambda_n, \beta})v_\beta & \abs{\beta} > N. 
\end{cases}
\end{align}
\end{subequations}
For this choice of $A_n$ and $A_n^{\dagger}$, we will provide explicit expressions for the quantities in \eqref{eq:constants_banach_radii} that are used in the Radii Polynomial Theorem. The derived expressions are such that, once we have fixed parameters $\alpha$ and $n$, the quantities can explicitly computed by a computer. 

\begin{lemma}[$Y_0$ bound] \label{lem:Y0_ps}
Fix $\alpha > 0$, $n \in \mathbb{N}$ and let $\zerops$ be as defined in \eqref{eq:zero_map_wright_ps}. Let $\hat{x}  \in \pi_N \cS_2$ be a pseudo-finite sequence; let the operator $A_n$ be defined via \eqref{eq:A_n} and let $\blowupscalarps$ be as defined in \eqref{eq:def_blowup_scalar_ps}. Let 
\begin{equation} \label{eq:Y0 bound ode}
Y_0 :=  
\sum_{\abs{\beta} = 0}^N \norm{(\numinv_n \zerops (\hat{x}))_\beta} + \sum_{\abs{\beta} = N+1}^{2N} \norm{\Delta_n(\dotp{\lambda_n, \beta})^{-1} (\alpha \blowupscalarps (\hat{x})*\hat{x})_\beta}
\end{equation}
then $\norm{A_n \zerops (\hat{x})} \leq Y_0$. 
\end{lemma}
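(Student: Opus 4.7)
The plan is to exploit the fact that $\hat{x}$ is finitely supported to reduce the $\ell^1$ sum defining $\norm{A_n \zerops(\hat{x})}$ to an exact finite expression. First I would observe that since $\hat{x} \in \pi_N \cS_2$ satisfies $\hat{x}_\beta = 0$ for $\abs{\beta} > N$, and since by \eqref{eq:def_blowup_scalar_ps} the operator $\blowupscalarps$ acts diagonally (multiplying each $\hat{x}_\beta$ by the scalar $((D - \dotp{\lambda_n, \beta} I)^{-1} D \textbf{1})_n$), the sequence $\blowupscalarps(\hat{x})$ has the same support as $\hat{x}$. It then follows from the definition of the convolution in \eqref{eq:def convolution} that $\hat{x} * \blowupscalarps(\hat{x})$ is supported on $\abs{\beta} \leq 2N$.

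With this support information in hand I would compute $\zerops(\hat{x})_\beta$ componentwise using \eqref{eq:zero_map_wright_ps}. For $\abs{\beta} > N$ the term $\Delta_n(\dotp{\lambda_n, \beta}) \hat{x}_\beta$ vanishes, and for $\abs{\beta} > 2N$ the convolution term vanishes as well, so $\zerops(\hat{x})_\beta = 0$ whenever $\abs{\beta} > 2N$. This is precisely what makes the infinite sum collapse to a finite one, and is the key structural input of the argument.

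Finally I would apply the definition of $A_n$ in \eqref{eq:A_n} and split the $\ell^1$ norm at $\abs{\beta} = N$. On the low regime $\abs{\beta} \leq N$, $A_n$ acts as the finite matrix $\numinv_n$, producing the first sum in \eqref{eq:Y0 bound ode}. On the high regime $\abs{\beta} > N$, $A_n$ is diagonal with entries $\Delta_n(\dotp{\lambda_n, \beta})^{-1}$; combining the formula $\zerops(\hat{x})_\beta = \alpha(\hat{x} * \blowupscalarps(\hat{x}))_\beta$ valid there (since $\hat{x}_\beta = 0$) with the truncation at $\abs{\beta} = 2N$ yields exactly the second sum in \eqref{eq:Y0 bound ode}. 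In fact this gives the equality $\norm{A_n \zerops(\hat{x})} = Y_0$, which is strictly stronger than the stated inequality. There is no genuine analytic obstacle here; the only point requiring care is the bookkeeping of convolution supports, and no Neumann series or infinite tail estimate is needed, so $Y_0$ can be evaluated rigorously in interval arithmetic on a computer once $\hat{x}$, $\numinv_n$, and $\lambda_n$ are fixed.
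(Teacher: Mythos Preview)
Your proposal is correct and follows essentially the same argument as the paper: both use the finite support of $\hat{x}$ to kill the diagonal term for $\abs{\beta}>N$ and the convolution term for $\abs{\beta}>2N$, then split the $\ell^1$ norm of $A_n F_n(\hat{x})$ at $\abs{\beta}=N$ according to the block-diagonal structure of $A_n$. Your observation that one actually obtains equality (not merely $\leq$) is a harmless sharpening of what the paper states.
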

\begin{proof}
Since $\hat{x} \in \pi_N \cS_2$ (i.e. $\hat{x}_\beta = 0$ for $\left| \beta \right| > N$), it holds that 
\[
\Delta_n(\dotp{\lambda_n, \beta}) \hat{x}_\beta = 0 \qquad \mbox{for } \abs{\beta} \geq N +1 
\]
and 
\[
(\hat{x} \ast \blowupscalarps (\hat{x}))_\beta = 0 \qquad \mbox{for } \abs{\beta} \geq 2N + 1. 
\] 
This implies that 
\begin{align*}
(F_n(\hat{x}))_\beta = 
\begin{cases}
(\hat{x} \ast \blowupscalarps (\hat{x}))_\beta \qquad &\mbox{if } N + 1 \leq \abs{\beta} \leq 2 N \\
0 \qquad &\mbox{if } \abs{\beta} \geq 2N + 1.  
\end{cases}
\end{align*}
From here we compute that 
\[ \norm*{A_n \zerops (\hat{x})} \leq \sum_{\abs{\beta} = 0}^N \norm*{(\numinv_n F_n(\hat{x}))_\beta} + \sum_{\abs{\beta} = N+1}^{2N} \norm{\Delta_n(\dotp{\lambda_n, \beta})^{-1} (\alpha \blowupscalarps (\hat{x})*\hat{x})_\beta} \]
and the claim follows. 
\end{proof}

\begin{lemma}[$Z_0$ bound] \label{lem:Z0_ps}
Let the operators $A_n, A_n^\dagger$ be defined via \eqref{eq:A_n}--\eqref{eq:A_n_dagger} and let 
\[ 
Z_0 := \norm*{I - \numinv_n \numder_n } 
\]
then $\norm{I - A_n A_n^\dagger} \leq Z_0$. 
\end{lemma}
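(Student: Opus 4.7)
The plan is to exploit the block-diagonal structure of both $A_n$ and $A_n^\dagger$ with respect to the decomposition $\mathcal{S}_2 = X_N \oplus X_\infty$. By construction in \eqref{eq:A_n}--\eqref{eq:A_n_dagger}, each of $A_n$ and $A_n^\dagger$ preserves this splitting: on the finite part $X_N = \pi_N \mathcal{S}_2$ they act as $\numinv_n$ and $\numder_n$ respectively, while on the tail part $X_\infty$ they act as diagonal multiplication operators with entries $\Delta_n(\langle \lambda_n, \beta\rangle)^{-1}$ and $\Delta_n(\langle \lambda_n, \beta\rangle)$. Consequently $I - A_n A_n^\dagger$ is itself block-diagonal.

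First I would compute the tail block. For $\abs{\beta} > N$, the action of $A_n A_n^\dagger$ on a sequence $v$ is simply componentwise multiplication by $\Delta_n(\langle \lambda_n, \beta\rangle)^{-1} \cdot \Delta_n(\langle \lambda_n, \beta\rangle) = 1$. Hence $((I - A_n A_n^\dagger)v)_\beta = 0$ for every $\abs{\beta} > N$. (This uses implicitly that $\Delta_n(\langle \lambda_n, \beta \rangle) \neq 0$ for $\abs{\beta} \geq 2$, which is the nonresonance assumption built into the setup via Theorem \ref{prop:parametrization_ps} and has already been verified for the regime under consideration.)

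Next I would compute the finite block: for $\abs{\beta} \leq N$, the composition $A_n A_n^\dagger$ acts exactly as the matrix product $\numinv_n \numder_n$, so $(I - A_n A_n^\dagger)\big|_{X_N} = I_{X_N} - \numinv_n \numder_n$. Combining the two blocks, the induced $\ell^1$ operator norm decomposes as
\[
\norm{I - A_n A_n^\dagger}_{\ell^1} = \max\bigl\{\, \norm{I - \numinv_n \numder_n}_{\ell^1},\ 0 \,\bigr\} = \norm{I - \numinv_n \numder_n}_{\ell^1} = Z_0,
\]
which gives the claimed bound.

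There is essentially no genuine obstacle here; the result is a direct consequence of the design of $A_n$ and $A_n^\dagger$ as tail-diagonal inverses of each other. The only point requiring care is ensuring that the tail diagonal entries of $A_n^\dagger$ are nonzero so that the cancellation is literal, but this is exactly guaranteed by the nonresonance hypothesis \eqref{eq:nonresoncance_ps} together with the verification via Corollary \ref{cor:resolvent_D} that $D - \langle \lambda_n, \beta\rangle I$ is invertible for every $\beta \in \mathbb{N}^d$ — so $\Delta_n(\langle\lambda_n, \beta\rangle)$ is well-defined and nonzero on the tail.
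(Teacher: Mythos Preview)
Your proposal is correct and follows essentially the same approach as the paper: both exploit the block-diagonal structure of $A_n$ and $A_n^\dagger$ with respect to $X_N \oplus X_\infty$, observe that the tail block of $A_n A_n^\dagger$ is the identity, and conclude that $I - A_n A_n^\dagger$ reduces to the finite block $I - \numinv_n \numder_n$. Your additional remarks on nonresonance are not needed for the argument itself (the cancellation is purely algebraic once $A_n$ is defined), but they do no harm.
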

\begin{proof}
Let $X_N = \pi_N \cS_2$ and $X_\infty = \pi_\infty \cS_2$; then with respect to the decomposition $X = X_N \oplus X_\infty$, the operators $A, A^\dagger$ are of the form
\begin{equation*} 
A_n = 
\begin{pmatrix}
\numinv_n & 0 \\
0 & \pi_\infty A_n \pi_\infty
\end{pmatrix}, \qquad 
A_n^\dagger = 
\begin{pmatrix}
\numder_n & 0 \\
0 & \pi_\infty A_n^\dagger \pi_\infty
\end{pmatrix}. 
\end{equation*}
From here, we compute that
\[ A_n A_n^\dagger = \begin{pmatrix}
\numinv_n \numder_n & 0 \\
0 & \pi_\infty A_n \pi_\infty A_n^\dagger \pi_\infty
\end{pmatrix}.
\]
The identities \eqref{eq:A_n}--\eqref{eq:A_n_dagger} imply that $\pi_\infty A_n \pi_\infty A_n^\dagger \pi_\infty$ is equal to the identity operator on $X_\infty$. Hence 
\[
I - A_n A_n^\dagger = 
\begin{pmatrix}
I - \numinv_n \numder_n & 0 \\
0 & 0
\end{pmatrix}
\]
and therefore
\[ 
\norm{I - A_n A_n^\dagger} \leq \norm{I - \numinv_n \numder_n}, 
\]
as claimed. 
\end{proof}

In the following, we denote by 
\[
\lambda_n = (\lambda_{n, +}, \lambda_{n, -})
\]
two unstable eigenvalues of the map \eqref{eq:lin_wright_ps} that are complex conjugates of each other, 
so that $\Re (\lambda_{n, +}) = \Re(\lambda_{n, -})$, and write with slight abuse of notation 
\begin{equation}
\label{eq:def_re_lambdan}
\Re(\lambda_n) := \Re (\lambda_{n, +}) = \Re(\lambda_{n, -}). 
\end{equation}
In particular, since the vector $\lambda_n = (\lambda_{n, +}, \lambda_{n, -})$ is a two-dimensional vector, the quantity $\min(\Re(\lambda_n))$ (cf. \eqref{eq:def_min_re_lambda}) is now equal to $\Re(\lambda_n)$, and the condition \eqref{eq:M_treshhold_general} reduces to 
\begin{equation} \label{eq:N_threshhold}
M >  \frac{\norm{D}+\epsilon}{\Re(\lambda_n)}. 
\end{equation}

\begin{lemma}
\label{lem:Delta_n_inverse}
Fix $n \in \mathbb{N}$, $\alpha > 0$, let $\lambda_n = (\lambda_{n, +}, \lambda_{n, -})$ be a vector of two unstable eigenvalues of \eqref{eq:lin_wright_ps} which are complex conjugates and let $\Re(\lambda_n)$ be as in \eqref{eq:def_re_lambdan}. 

Fix $\epsilon > 0$ and let $M \in \mathbb{N}$ be such that 
\begin{equation} \label{eq:M_treshold_delta}
M > \max \left \lbrace \frac{\norm{D}+\epsilon}{\Re(\lambda_n)}, \, \frac{\alpha \norm{D} \norm{\textbf{1}}}{\Re(\lambda_n) \epsilon} \right \rbrace, 
\end{equation}
then
\begin{align} \label{eq:Delta_n_estimate}
\max_{\abs{\beta} \geq M} \left \{ \frac{1}{\abs{\Delta_n(\dotp{\lambda_n, \beta})}} \right \} \leq \frac{1}{\Re(\lambda_n)M - \frac{\alpha \norm{D} \norm{\textbf{1}}}{\epsilon}}. 
\end{align}
\end{lemma}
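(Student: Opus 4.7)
The plan is to bound $|\Delta_n(\langle \lambda_n, \beta\rangle)|$ from below using a reverse triangle inequality, splitting $\Delta_n$ into its linear part $z$ and its resolvent part $\alpha((D-zI)^{-1}D\mathbf{1})_n$, and then estimate each piece separately using facts already established. Specifically, I would write
\[
\abs{\Delta_n(\dotp{\lambda_n,\beta})} \;\geq\; \abs{\dotp{\lambda_n,\beta}} \;-\; \alpha\, \abs*{\bigl((D-\dotp{\lambda_n,\beta}I)^{-1} D\textbf{1}\bigr)_n},
\]
and then lower-bound the first term and upper-bound the second term for every $\beta$ with $\abs{\beta}\geq M$.

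For the first term, I would use that for Wright's equation the two unstable eigenvalues are complex conjugates, so $\Re(\lambda_{n,+}) = \Re(\lambda_{n,-}) = \Re(\lambda_n)$ and hence
\[
\abs{\dotp{\lambda_n,\beta}} \;\geq\; \Re(\dotp{\lambda_n,\beta}) \;=\; \Re(\lambda_n)(\beta_1+\beta_2) \;=\; \Re(\lambda_n)\abs{\beta} \;\geq\; \Re(\lambda_n) M.
\]
For the resolvent term, I would apply Lemma~\ref{lem:resolvent_estimate_ode} directly: the first branch of the hypothesis \eqref{eq:M_treshold_delta} on $M$ is precisely condition \eqref{eq:M_treshhold_general} (with $\min(\Re(\lambda_n)) = \Re(\lambda_n)$), so the lemma yields
\[
\alpha\, \abs*{\bigl((D-\dotp{\lambda_n,\beta}I)^{-1} D\textbf{1}\bigr)_n} \;\leq\; \frac{\alpha \norm{D}\norm{\textbf{1}}}{\epsilon}
\]
for all $\abs{\beta}\geq M$. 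Combining the two estimates gives
\[
\abs{\Delta_n(\dotp{\lambda_n,\beta})} \;\geq\; \Re(\lambda_n) M - \frac{\alpha \norm{D}\norm{\textbf{1}}}{\epsilon},
\]
and inverting yields the desired bound.

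The only non-routine point is to justify that the denominator in the final estimate is strictly positive, so that the inversion is legitimate and the bound is meaningful. This is exactly what the second branch of \eqref{eq:M_treshold_delta}, namely $M > \alpha\norm{D}\norm{\textbf{1}}/(\Re(\lambda_n)\epsilon)$, is designed to enforce: it is equivalent to $\Re(\lambda_n) M - \alpha\norm{D}\norm{\textbf{1}}/\epsilon > 0$. The first branch controls the Neumann series needed to apply Lemma~\ref{lem:resolvent_estimate_ode}, and the second branch ensures the resulting lower bound for $|\Delta_n|$ is positive; taking the maximum of the two thresholds is therefore both necessary and sufficient. No genuine obstacle is expected; the lemma is really a bookkeeping corollary of Lemma~\ref{lem:resolvent_estimate_ode} together with the observation that the unstable eigenvalues are conjugate.
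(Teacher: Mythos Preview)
Your proposal is correct and follows essentially the same approach as the paper's proof: both use the reverse triangle inequality to split $\Delta_n$ into its linear and resolvent parts, bound the linear part below by $\Re(\lambda_n)M$ via the conjugacy of the eigenvalues, invoke Lemma~\ref{lem:resolvent_estimate_ode} (enabled by the first branch of \eqref{eq:M_treshold_delta}) to bound the resolvent part by $\alpha\norm{D}\norm{\mathbf{1}}/\epsilon$, and then use the second branch of \eqref{eq:M_treshold_delta} to guarantee positivity before inverting. The paper's only cosmetic difference is that it writes the reverse triangle inequality with an outer absolute value first and removes it afterwards, whereas you bundle that step into the positivity discussion at the end.
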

\begin{proof}
We first rewrite the expression for $\Delta_n(\dotp{\lambda_n, \beta})$ (cf. \eqref{eq:ce_wright_ps}) as
\begin{align*}
\Delta_n(\dotp{\lambda_n, \beta}) 
&= \dotp{\lambda_n, \beta} - \alpha \left( \left(\dotp{\lambda_n, \beta}I-D\right)^{-1} D \textbf{1} \right)_n. 
\end{align*}
from where the reverse triangle inequality implies that
\begin{align} \label{eq:delta_n_est_step}
\abs{\Delta_n(\dotp{\lambda_n, \beta})} \geq \abs*{ \abs{\dotp{\lambda_n, \beta}} - \abs*{\alpha \left( \left(\dotp{\lambda_n, \beta}I-D\right)^{-1} D \textbf{1} \right)_n}}. 
\end{align}
For all $\beta \in \mathbb{N}^2$ with $\abs{\beta} \geq M$, it holds that \[
\abs{\dotp{\lambda_n, \beta}} \geq \Re(\lambda_n) M
\]
(cf. \eqref{eq:estimate_lambeta}). Moreover, if $M$ satisfies the inequality \eqref{eq:M_treshold_delta}, then it also satisfies the inequality \eqref{eq:N_threshhold} and Lemma \ref{lem:resolvent_estimate_ode} implies that 
\[
\abs*{((\dotp{\lambda_n, \beta} I - D)^{-1} D \textbf{1})_n} \leq \frac{\norm{D} \norm{\textbf{1}}}{\epsilon}. 
\]
Hence it follows that 
\[ \abs{\dotp{\lambda_n, \beta}} - \abs*{\alpha \left( \left(\dotp{\lambda_n, \beta}I-D\right)^{-1} D \textbf{1} \right)_n} \geq \Re(\lambda_n)M - \frac{\alpha \norm{D} \norm{\textbf{1}}}{\epsilon}. \]
Due to the assumption \eqref{eq:M_treshold_delta}, the right hand side of the above inequality is positive; together with \eqref{eq:delta_n_est_step} this implies that 
\begin{align*}
\abs{\Delta_n(\dotp{\lambda_n, \beta})} 
& \geq \Re(\lambda_n)M - \frac{\alpha \norm{D} \norm{\textbf{1}}}{\epsilon} > 0
\end{align*}
for all $\abs{\beta} \geq M$.  
It now follows that  
\begin{align*}
\max_{\abs{\beta} \geq M} \left \{ \frac{1}{\abs{\Delta_n(\dotp{\lambda_n, \beta})}} \right \}  \leq \frac{1}{\Re(\lambda_n)M - \frac{\alpha \norm{D} \norm{\textbf{1}}}{\epsilon}}, 
\end{align*}
as claimed. 
\end{proof}

\begin{lemma}[$Z_1$ bound]
\label{lem:Z1_ps}
Fix $n \in \mathbb{N}$, $\alpha > 0$, let $\hat{x} \in \pi_N \cS_2$ be a pseudo-finite sequence and let $\lambda_n = (\lambda_{n, +}, \lambda_{n, -})$ be a vector of two unstable eigenvalues of \eqref{eq:lin_wright_ps} which are complex conjugates; let $\Re(\lambda_n)$ be as in \eqref{eq:def_re_lambdan}. Fix $\epsilon > 0$ and let $M \in \mathbb{N}$ be such that \eqref{eq:M_treshold_delta} holds. Then the constant
\begin{equation}
\label{eq:wright_Z1_ode}
\begin{aligned}
Z_1 : = &\norm*{\numinv_n \left(\pi_N D\zerops(\hat{x}) \pi_N - \numder_n \right)}  \\
&+ \alpha \max_{N < \abs{\beta} \leq M} \left \{ \frac{1}{\abs{\Delta_n(\dotp{\lambda_n, \beta})}}, \frac{1}{\Re(\lambda_n)M - \frac{\alpha \norm{D} \norm{\textbf{1}}}{\epsilon}} \right \} \\
&\times  \left(\norm{\blowupscalarps (\hat{x})} + \norm{\hat{x}} \max_{0 \leq \abs{\beta} \leq M} \left \{ \abs{((D-\dotp{\lambda_n, \beta}I)^{-1}D\textbf{1})_n}, \frac{\norm{D} \norm{\textbf{1}}}{\epsilon} \right \} \right) 
\end{aligned}
\end{equation}
has the property that
\[ 
\norm{A_n(D \zerops (\hat{x}) - A_n^\dagger)} \leq Z_1. 
\]
\end{lemma}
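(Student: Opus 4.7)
The plan is to decompose $A_n(D\zerops(\hat{x}) - A_n^\dagger)$ with respect to the splitting $\cS_2 = X_N \oplus X_\infty$ and estimate the low-frequency piece $\pi_N A_n(D\zerops(\hat{x}) - A_n^\dagger)$ and the high-frequency piece $\pi_\infty A_n(D\zerops(\hat{x}) - A_n^\dagger)$ separately. The triangle inequality on the operator norm then combines these contributions into $Z_1$, matching its two summands.

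For the low-frequency piece I would use that $A_n$ acts as $\numinv_n$ on $X_N$, so the calculation reduces to controlling $\pi_N(D\zerops(\hat{x}) - A_n^\dagger)$. The key observation is that the cross-term $\pi_N D\zerops(\hat{x})\pi_\infty$ vanishes: since both $\hat{x}$ and $\blowupscalarps(\hat{x})$ are supported on $\{\abs{\beta} \leq N\}$ while any $v_\infty \in X_\infty$ vanishes there, the inequality $\abs{\beta_j} \leq \abs{\beta_1 + \beta_2}$ in $\mathbb{N}^2$ forces every term in the convolutions $\blowupscalarps(\hat{x}) * v_\infty$ and $\blowupscalarps(v_\infty) * \hat{x}$ to vanish on $\{\abs{\beta} \leq N\}$. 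Consequently, $\pi_N A_n(D\zerops(\hat{x}) - A_n^\dagger) = \numinv_n(\pi_N D\zerops(\hat{x})\pi_N - \numder_n)\pi_N$, whose operator norm is bounded by the first summand of $Z_1$.

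For the high-frequency piece I would observe that on $\{\abs{\beta} > N\}$ both $A_n$ and $A_n^\dagger$ are diagonal, with entries $\Delta_n(\dotp{\lambda_n, \beta})^{-1}$ and $\Delta_n(\dotp{\lambda_n, \beta})$, respectively. Therefore, for \emph{any} $v \in \cS_2$, the diagonal $\Delta_n(\dotp{\lambda_n,\beta}) v_\beta$ terms of $D\zerops(\hat{x}) v$ and $A_n^\dagger v$ cancel exactly, leaving $\pi_\infty A_n(D\zerops(\hat{x}) - A_n^\dagger) v = \pi_\infty \bigl[\Delta_n(\dotp{\lambda_n,\beta})^{-1}\,\alpha(\blowupscalarps(\hat{x}) * v + \blowupscalarps(v) * \hat{x})\bigr]$. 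The Banach algebra property $\norm{a * b}_{\ell^1} \leq \norm{a}_{\ell^1}\norm{b}_{\ell^1}$ controls the convolution by $\bigl(\norm{\blowupscalarps(\hat{x})} + \norm{\blowupscalarps}\norm{\hat{x}}\bigr)\norm{v}$, with $\norm{\blowupscalarps}$ bounded by Lemma \ref{lem:R_n_norm}. Factoring out $\max_{\abs{\beta} > N}\abs{\Delta_n(\dotp{\lambda_n,\beta})^{-1}}$ and splitting this supremum into the finite range $N < \abs{\beta} \leq M$ (where each $\abs{\Delta_n(\dotp{\lambda_n,\beta})^{-1}}$ can be computed directly with interval arithmetic) and the tail $\abs{\beta} > M$ (controlled by Lemma \ref{lem:Delta_n_inverse} with bound $1/(\Re(\lambda_n)M - \alpha\norm{D}\norm{\textbf{1}}/\epsilon)$) produces exactly the second summand of $Z_1$.

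The main obstacle is the support-tracking argument showing that the cross-term $\pi_N D\zerops(\hat{x})\pi_\infty$ vanishes identically. This is what allows the low-frequency block to collapse cleanly to the finite-dimensional quantity $\norm{\numinv_n(\pi_N D\zerops(\hat{x})\pi_N - \numder_n)}$ without inheriting any tail contribution, so the two summands in $Z_1$ remain genuinely decoupled. Once this support computation is in hand, the rest of the argument is a routine composition of the explicit diagonal form of $A_n$ and $A_n^\dagger$ on the high-frequency tail with the standard $\ell^1$ convolution inequality and the two preparatory Lemmas \ref{lem:Delta_n_inverse} and \ref{lem:R_n_norm}.
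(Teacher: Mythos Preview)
Your proposal is correct and follows essentially the same route as the paper's proof: the same $\pi_N/\pi_\infty$ splitting, the same identification of the finite block via $\pi_N A_n = \numinv_n \pi_N$ and $\pi_N D\zerops(\hat{x}) = \pi_N D\zerops(\hat{x})\pi_N$, and the same tail estimate via the $\ell^1$ Banach algebra inequality combined with Lemmas~\ref{lem:Delta_n_inverse} and~\ref{lem:R_n_norm}. Your support-tracking justification for the vanishing of $\pi_N D\zerops(\hat{x})\pi_\infty$ is in fact more explicit than the paper, which simply asserts the identity $\pi_N D\zerops(\hat{x}) = \pi_N D\zerops(\hat{x})\pi_N$ without spelling out the convolution argument.
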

\begin{proof}
For any sequence $x \in \mathcal{S}_2$, it holds that $\norm{x} \leq \norm{\pi_N x} + \norm{\pi_\infty x}$; 
with $x =  A_n \left(D\zerops (\hat{x}) - A^\dagger_n\right)$, this implies that 
\begin{equation} \label{eq:Z1_ps_step1}
\norm*{\left(A_n \left(D\zerops (\hat{x}) - A_n^\dagger\right)\right)} \leq 
\norm*{\pi_N \left(A_n \left(D \zerops(\hat{x}) - A_n^\dagger\right)\right)} + 
\norm*{\pi_\infty \left(A_n \left(D \zerops (\hat{x}) - A_n^\dagger\right)\right)}
\end{equation}
From the definitions of $A_n$ and $A_n^\dagger$ in \eqref{eq:A_n}--\eqref{eq:A_n_dagger} and from the epxression for $DF_n(\hat{x})$ in \eqref{eq:DF_n} it follows that 
\begin{align*}
\pi_N A_n = \numinv_n \pi_N, \qquad 
\pi_N A_n^\dagger = \numder_n \pi_N, \qquad 
\pi_N DF_n(\hat{x}) = \pi_N DF_n(\hat{x}) \pi_N
\end{align*}
so that we can rewrite the first term on the right hand side of \eqref{eq:Z1_ps_step1} as 
\begin{align} \label{eq:Z1_ps_step2}
\norm*{\pi_N \left(A_n \left(D \zerops(\hat{x}) - A_n^\dagger\right)\right)} = \norm*{\numinv_n \left(\pi_N D \zerops(\hat{x}) \pi_N - \numder_n \right)}. 
\end{align}
To bound the second term on the right hand side of \eqref{eq:Z1_ps_step1}, we let $v \in \mathcal{S}_2$ be a sequence and compute that  
\begin{align*}
\left(\pi_\infty \left(A_n \left(D \zerops (\hat{x}) - A_n^\dagger\right)\right)v\right)_\beta &= 
\left( \pi_\infty A_n D \zerops (\hat{x}) v \right)_\beta - \left( \pi_\infty A_n A_n^\dagger v_\beta \right)_\beta \\
&= \Delta_n^{-1} ( \dotp{\lambda_n, \beta}) \left[ \Delta_n(\dotp{\lambda_n, \beta}) v_\beta + \alpha( \blowupscalarps (\hat{x})*v + \blowupscalarps (v)*\hat{x})_\beta \right]  \\  &\qquad - \Delta_n^{-1}(\dotp{\lambda_n, \beta}) \Delta(\dotp{\lambda_n, \beta}) v_\beta \\
& = \alpha \Delta_n^{-1} ( \dotp{\lambda_n, \beta}) (\blowupscalarps (\hat{x})*v + \blowupscalarps (v)*\hat{x})_\beta. 
\end{align*}
This implies that 
\begin{align*}
\norm{\pi^\infty \left(A_n \left(D\zerops(\hat{x}) - A_n^\dagger\right)\right)v} & = 
\sum_{\abs*{\beta} > N} \abs*{\alpha \Delta_n\left(\langle \lambda_n, \beta \rangle \right)^{-1} \left(\blowupscalarps (\hat{x})*v + \blowupscalarps (v)*\hat{x}\right)_\beta} \\
& \leq \alpha \sup_{\abs*{\beta} > N} \setof*{\abs*{\Delta_n(\langle \lambda_n, \beta \rangle)^{-1}}} \left(\sum_{\abs*{\beta} > N}\abs*{(\blowupscalarps(\hat{x})*v)_\beta} + \abs*{(\blowupscalarps(v)*\hat{x})_\beta}\right) \\
& \leq \alpha \sup_{\abs*{\beta} > N} \setof*{\abs*{\Delta_n(\langle \lambda_n, \beta \rangle)^{-1}}} \left(\norm*{\blowupscalarps(\hat{x})*v} + \norm*{\blowupscalarps(v)*\hat{x}}\right) \\
&  \leq \alpha \sup_{\abs*{\beta} > N} \setof*{\abs*{\Delta_n(\langle \lambda_n, \beta \rangle)^{-1}}} \left(\norm*{\blowupscalarps(\hat{x})}\norm{v} + \norm*{\blowupscalarps(v)}\norm*{\hat{x}}\right)  \\
&  \leq \alpha \sup_{\abs*{\beta} > N} \setof*{\abs*{\Delta_n(\langle \lambda_n, \beta \rangle)^{-1}}} \left(\norm*{\blowupscalarps (\hat{x})} \norm{v} + \norm{\blowupscalarps} \norm{v} \norm{\hat{x}} \right). 
\end{align*}
From Lemma \ref{lem:Delta_n_inverse} it follows that
\[
\sup_{\abs*{\beta} > N} \setof*{\abs*{\Delta_n(\langle \lambda_n, \beta \rangle)^{-1}}} = \max_{N < \abs{\beta} \leq M} \left \{ \frac{1}{\abs{\Delta_n(\dotp{\lambda_n, \beta})}}, \frac{1}{\Re(\lambda_n)M - \frac{\alpha \norm{D} \norm{\textbf{1}}}{\epsilon}} \right \}  
\]
and together with Lemma \ref{lem:R_n_norm} this implies that 
\begin{align*}
\norm{\pi^\infty \left(A_n \left(D\zerops(\hat{x}) - A_n^\dagger\right)\right)} =  &\alpha \max_{N < \abs{\beta} \leq M} \left \{ \frac{1}{\abs{\Delta_n(\dotp{\lambda_n, \beta})}}, \frac{1}{\Re(\lambda_n)M - \frac{\alpha \norm{D} \norm{\textbf{1}}}{\epsilon}} \right \} \\
&\times  \left(\norm{\blowupscalarps (\hat{x})} + \norm{\hat{x}} \max_{0 \leq \abs{\beta} \leq M} \left \{ \abs{((D-\dotp{\lambda_n, \beta}I)^{-1}D\textbf{1})_n}, \frac{\norm{D} \norm{\textbf{1}}}{\epsilon} \right \} \right) .
\end{align*}
Combining this with estimate \eqref{eq:Z1_ps_step1} and equality \eqref{eq:Z1_ps_step2} yields the desired estimate. 
\end{proof}

\begin{lemma}[$Z_2$ bound] \label{lem:Z2_ps}
Fix $n \in \mathbb{N}$, $\alpha > 0$,  let $\lambda_n = (\lambda_{n, +}, \lambda_{n, -})$ be a vector of two unstable eigenvalues of \eqref{eq:lin_wright_ps} which are complex conjugates and let $\hat{x} \in \pi_N \cS_2$ be a pseudo-finite sequence. Additionally fix $\epsilon > 0$ and let $M \in \mathbb{N}$ be such that \eqref{eq:M_treshold_delta} holds. Define the constant
\begin{equation*}
\begin{aligned}
Z_2 = 2 \alpha  &\max_{N< \abs*{\beta} \leq M} \setof*{ \abs*{\Delta_n(\langle \lambda_n, \beta \rangle )^{-1}}, \frac{1}{\Re(\lambda_n) M - \frac{\alpha \norm{D} \norm{\textbf{1}}}{\epsilon}},  \norm{\numinv_n}} \\
\times 
 &\max_{\abs{\beta} \leq M} \left \{ \abs{((D-\dotp{\lambda_n, \beta}I)^{-1}D \textbf{1})_n}, \frac{\norm{D} \norm{\textbf{1}}}{\epsilon} \right \}
\end{aligned}. 
\end{equation*}
Then the inequality 
\[  \norm*{A_n\left(D\zerops (\hat{x}+y) - D \zerops (\hat{x})\right)} \leq Z_2 \norm{y} \]
holds for all $y \in \mathcal{S}_2$. 
\end{lemma}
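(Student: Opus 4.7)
The plan is to compute $DF_n(\hat{x}+y) - DF_n(\hat{x})$ explicitly, observe that it is a bilinear operator (so its application on $v$ is controlled by $\norm{y}\norm{v}$), and then apply $A_n$ by splitting along the decomposition $\cS_2 = \pi_N \cS_2 \oplus \pi_\infty \cS_2$ exactly as in the proof of the $Z_1$ bound.

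First, from the formula \eqref{eq:DF_n} for $D\zerops(x)v$, the terms indexed by $\abs{\beta} \leq 1$ are independent of $x$, and the $\abs{\beta} \geq 2$ part is linear in both $x$ and $v$ through the convolutions $\blowupscalarps(x)*v$ and $\blowupscalarps(v)*x$. Consequently, for $w := \left(D\zerops(\hat{x}+y) - D\zerops(\hat{x})\right)v$ we have $w_\beta = 0$ for $\abs{\beta} \leq 1$ and
\[
w_\beta \;=\; \alpha\bigl(\blowupscalarps(y) * v + \blowupscalarps(v) * y\bigr)_\beta, \qquad \abs{\beta} \geq 2.
\]
Since $\ell^1$ is a Banach algebra under the convolution $*$, and $\blowupscalarps$ is a multiplier bounded in operator norm by Lemma \ref{lem:R_n_norm}, we get
\[
\norm{w} \;\leq\; \alpha\bigl(\norm{\blowupscalarps(y)}\norm{v} + \norm{\blowupscalarps(v)}\norm{y}\bigr) \;\leq\; 2\alpha\,\norm{\blowupscalarps}\,\norm{y}\norm{v}.
\]

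Second, I apply $A_n$ to $w$ using the split form of $A_n$ from \eqref{eq:A_n}. On the finite block one has $\pi_N A_n = \numinv_n \pi_N$, and on the tail one has $(A_n w)_\beta = \Delta_n(\dotp{\lambda_n,\beta})^{-1} w_\beta$ for $\abs{\beta} > N$. Therefore
\[
\norm{A_n w} \;\leq\; \norm{\numinv_n}\,\norm{\pi_N w} + \sup_{\abs{\beta} > N}\abs{\Delta_n(\dotp{\lambda_n,\beta})^{-1}}\,\norm{\pi_\infty w} \;\leq\; C_{A}\,\norm{w},
\]
where
\[
C_A \;:=\; \max\!\left\{\,\norm{\numinv_n},\; \sup_{\abs{\beta} > N}\abs{\Delta_n(\dotp{\lambda_n,\beta})^{-1}}\,\right\}.
\]
For the supremum on the tail, I split indices at $M$: for $N < \abs{\beta} \leq M$ the finitely many values $\abs{\Delta_n(\dotp{\lambda_n,\beta})^{-1}}$ are enumerated directly, and for $\abs{\beta} > M$ I invoke Lemma \ref{lem:Delta_n_inverse} to bound them by $1/(\Re(\lambda_n)M - \alpha\norm{D}\norm{\textbf{1}}/\epsilon)$, which yields the first max in the displayed formula for $Z_2$.

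Third, I bound $\norm{\blowupscalarps}$ by Lemma \ref{lem:R_n_norm}, which produces exactly the second maximum appearing in $Z_2$. Combining the three estimates gives
\[
\norm{A_n\bigl(D\zerops(\hat{x}+y) - D\zerops(\hat{x})\bigr)v} \;\leq\; 2\alpha\,C_A\,\norm{\blowupscalarps}\,\norm{y}\norm{v} \;=\; Z_2\,\norm{y}\norm{v},
\]
which, taking the operator norm over $v$, yields the claim. The only mild subtlety is the index split $\abs{\beta} \leq M$ versus $\abs{\beta} > M$ when invoking Lemmas \ref{lem:R_n_norm} and \ref{lem:Delta_n_inverse}; the rest is routine bookkeeping along the lines of Lemma \ref{lem:Z1_ps}.
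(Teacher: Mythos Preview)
Your proof is correct and follows essentially the same approach as the paper: compute the difference $D\zerops(\hat{x}+y)-D\zerops(\hat{x})$ explicitly from \eqref{eq:DF_n}, bound its $\ell^1$ norm via the Banach algebra property and Lemma~\ref{lem:R_n_norm}, and bound $\norm{A_n}$ by splitting along $\pi_N \oplus \pi_\infty$ and invoking Lemma~\ref{lem:Delta_n_inverse} on the tail. The paper carries out the same two estimates in the reverse order (first $\norm{A_n}$, then the derivative difference), but the content is identical.
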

\begin{proof}
We first estimate the operator norm of $A_n$. We fix a sequence $v \in \mathcal{S}_2$ and use the definition \eqref{eq:A_n} to derive that
\begin{align*}
\norm*{A_nv} & = \sum_{\abs*{\beta} \leq N} \abs*{(\numinv_n v)_\beta} + \sum_{\abs*{\beta} > N} \abs*{\Delta_n (\langle \lambda_n, \beta \rangle )^{-1}v_\beta} \\
& \leq \norm*{\numinv_n} \sum_{\abs*{\beta} \leq N} \abs*{v_\beta} + \sup_{\abs*{\beta} > N} \setof*{\Delta_n(\langle \lambda_n, \beta \rangle )} \sum_{\abs*{\beta} > N} \abs*{v_\beta} \\
&\leq \max_{\abs*{\beta} > N} \setof*{\Delta_n(\langle \lambda_n, \beta \rangle ), \norm{\numinv_n}}  \sum_{\beta \in \mathbb{N}^2} \abs*{v_\beta}  \\
& \leq \max_{N< \abs*{\beta} \leq M} \setof*{ \abs*{\Delta_n(\langle \lambda_n, \beta \rangle )^{-1}}, \frac{1}{\Re(\lambda_n) M - \frac{\alpha \norm{D} \norm{\textbf{1}}}{\epsilon}},  \norm{\numinv_n}} \norm{v}
\end{align*}
where in the last step we used Lemma \ref{lem:Delta_n_inverse}. 
We conclude that we can estimate the operator norm of $A_n$ as 
\begin{equation} \label{eq:norm_An}
\norm*{A_n} \leq \max_{N< \abs*{\beta} \leq M} \setof*{ \abs*{\Delta_n(\langle \lambda_n, \beta \rangle )^{-1}}, \frac{1}{\Re(\lambda_n) M - \frac{\alpha \norm{D} \norm{\textbf{1}}}{\epsilon}},  \norm{\numinv_n}}.
\end{equation}
We next fix a $y\in \mathcal{S}_2$ and a sequence $v \in \mathcal{S}_2$ with $\norm{v} = 1$; 
and compute that
\begin{align*}
(D\zerops (\hat{x}+y)v - D\zerops (\hat{x})v)_\beta = \begin{cases}
0 \qquad &\mbox{if } | \beta | \leq 1 \\
\alpha( \blowupscalarps (y)*v + \blowupscalarps (v)*y)_\beta \qquad &\mbox{if } \left| \beta \right| \geq 2
\end{cases}
\end{align*}
which implies that 
\begin{align*}
\norm{(D\zerops (\hat{x}+y)v - D\zerops (\hat{x})v)} 
& \leq 2 \alpha \norm{\blowupscalarps} \norm{y} \norm{v}  \\
& \leq 2 \alpha \norm{y} \norm{v} \max_{\abs{\beta} \leq M} \left \{ \abs{((D-\dotp{\lambda_n, \beta}I)^{-1}D \textbf{1})_n}, \frac{\norm{D} \norm{\textbf{1}}}{\epsilon} \right \}
\end{align*}
where in the last step we have used Lemma \ref{lem:R_n_norm}. Combining the last inequality with the estimate \eqref{eq:norm_An} yields the statement of the lemma. 
\end{proof}

In the accompanying code, we fix $\alpha = \placealpha$, $n = \placen$ and let $\lambda_{n} = (\lambda_{n, +}, \lambda_{n, -})$ be as in Lemma \ref{lem:eigenvalues_ps_explicit}. Moreover, we fix 
\[
\epsilon = \placeeps \approx 51.2507
\]
and let 
\[
M = \placeM
\]
so that the pair $\epsilon, M$ satisfies the condition \eqref{eq:M_treshold_delta} (and hence in particular also satisfies the condition \eqref{eq:N_threshhold}). Using IntLab, we establish that for each $\beta \in \mathbb{N}^2$ with $\abs{\beta} \leq M$, the equation
\[
(D - \dotp{\lambda_n, \beta}I) v = D \textbf{1}
\]
has a unique solution $v$; from there Corollary \ref{cor:resolvent_D} implies that

\begin{lemma} \label{lem:well-defined_cap}
For $\alpha = 2$, $n = \placen$ and $\lambda_{n} = (\lambda_{n, +}, \lambda_{n, -})$ as in Lemma \ref{lem:eigenvalues_ps_explicit}, the matrices 
\[
D - \dotp{\lambda_n, \beta}I
\]
are invertible for all $\beta \in \mathbb{N}^2$; hence the map $\zerops$ in \eqref{eq:zero_map_wright_ps} is well-defined. 
\end{lemma}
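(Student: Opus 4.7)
The plan is to apply Corollary \ref{cor:resolvent_D} directly, reducing the infinite family of invertibility questions to a finite computation that can be checked with interval arithmetic. First, I would fix the parameters: $\alpha = 2$, $n = \placen$, the vector $\lambda_n = (\lambda_{n,+}, \lambda_{n,-})$ supplied by Lemma \ref{lem:eigenvalues_ps_explicit} (so that $\Re(\lambda_n)$ is enclosed in a tight interval around $0.172816$), together with the choices $\epsilon = \placeeps$ and $M = \placeM$ already fixed in the surrounding discussion.

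Next I would verify, using IntLab, that this pair $(\epsilon, M)$ satisfies the threshold inequality \eqref{eq:M_treshold_delta} of Lemma \ref{lem:Delta_n_inverse}, which implies the weaker threshold \eqref{eq:M_treshhold_general} needed in Corollary \ref{cor:resolvent_D}. Concretely, one computes rigorous enclosures of $\norm{D}$, $\norm{\textbf{1}}$ and of $\Re(\lambda_n)$, and then checks the inequality numerically in interval arithmetic; since $M = \placeM$ is chosen with this condition in mind, this step is routine. By Lemma \ref{lem:resolvent_estimate_ode}, this already gives invertibility of $D - \dotp{\lambda_n, \beta}I$ for all $\abs{\beta} \geq M$.

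The remaining task is to verify invertibility for the finitely many multi-indices $\beta \in \mathbb{N}^2$ with $\abs{\beta} < M$. Here I would enumerate all such $\beta$ (there are fewer than $M^2/2$ of them, well within the scope of an IntLab computation), compute for each the interval enclosure of the matrix $D - \dotp{\lambda_n, \beta}I$, and solve the linear system $(D - \dotp{\lambda_n, \beta}I)v = D\textbf{1}$ using IntLab's verified linear solver \texttt{verifylss}. If the solver returns a valid interval enclosure for every such $\beta$, this certifies that each of these matrices is invertible. Corollary \ref{cor:resolvent_D} then yields invertibility for all $\beta \in \mathbb{N}^2$, and hence that $F_n$ as defined in \eqref{eq:zero_map_wright_ps} is well-defined.

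The main obstacle I anticipate is purely computational rather than conceptual: it is important that the interval enclosures of $D$, $\lambda_n$ and $\Re(\lambda_n)$ be tight enough that \texttt{verifylss} succeeds for all $\abs{\beta} < M$, since a failure for even a single $\beta$ would leave the lemma unproved. Tight enclosures of $\lambda_{n,\pm}$ are available from Lemma \ref{lem:eigenvalues_ps_explicit}, and the matrix $D$ is given exactly by rational expressions in the Chebyshev nodes \eqref{eq:chebyshev}, so this bookkeeping should go through cleanly.
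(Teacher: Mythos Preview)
Your proposal is correct and follows essentially the same approach as the paper: fix $\epsilon$ and $M$ so that the threshold condition \eqref{eq:M_treshold_delta} (and hence \eqref{eq:M_treshhold_general}) holds, verify in IntLab that the linear system $(D - \dotp{\lambda_n, \beta}I)v = D\textbf{1}$ has a unique solution for each of the finitely many $\beta$ with $\abs{\beta} < M$, and then invoke Corollary \ref{cor:resolvent_D}. The paper presents this argument in the paragraph immediately preceding the lemma rather than as a separate proof environment, but the content is identical.
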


In the accompanying code on \github, we take $\hat{x} \in \pi_N \cS_2$ as in Remark \ref{rem:xhat} and explicitly compute interval enclosures of the expressions for $Y_0, Z_0, Z_1$ and $Z_2$ from Lemma \ref{lem:Y0_ps}--\ref{lem:Z2_ps}. We then verify that the polynomial \[
p(r) = Z_2 r^2 - (1 - Z_0 - Z_1) r + Y_0
\]
has a zero on the positive real line. By means of the Radii Polynomial Theorem (Theorem \ref{thm:radii_polynomial}), this implies that: 

\begin{proposition} \label{prop:r_ps}
Let $\alpha = 2$ and $n = \placen$; let $\lambda_{n} = (\lambda_{n, +}, \lambda_{n, -})$ be as in Lemma \ref{lem:eigenvalues_ps_explicit} and let $\hat{x} \in \pi_n \cS_2$ be the pseudo-finite sequence as constructed in Remark \ref{rem:xhat}. 
Then for 
\[
r_{\text{\tt PSA}} = 1.110565565384011 \times 10^{-13}, 
\]
the map $\zerops$ defined in \eqref{eq:zero_map_wright_dde} has a zero on the ball $B(\hat{x}, r_{\text{\tt PSA}})$. 
\end{proposition}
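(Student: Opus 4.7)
The plan is to invoke the Radii Polynomial Theorem (Theorem \ref{thm:radii_polynomial}) with the Newton-like operator built from the linear maps $A_n$ and $A_n^\dagger$ of \eqref{eq:A_n}--\eqref{eq:A_n_dagger}. The four hypotheses of that theorem are precisely the estimates $\norm{A_n \zerops(\hat{x})} \leq Y_0$, $\norm{I - A_n A_n^\dagger} \leq Z_0$, $\norm{A_n(D\zerops(\hat{x}) - A_n^\dagger)} \leq Z_1$, and the Lipschitz estimate with constant $Z_2$; all four have been reduced to closed-form expressions in Lemmas \ref{lem:Y0_ps}--\ref{lem:Z2_ps}. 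Hence, once we fix all remaining data and verify the standing assumptions of those lemmas, the proof reduces to a finite, verified computation.

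First I would instantiate the parameters: $\alpha = \placealpha$, $n = \placen$, $\lambda_n$ the interval enclosures of Lemma \ref{lem:eigenvalues_ps_explicit}, $\hat{x}$ the pseudo-finite sequence of Remark \ref{rem:xhat}, and the tail-cutoff pair $\epsilon = \placeeps$, $M = \placeM$. Lemma \ref{lem:well-defined_cap} guarantees that $\zerops$ is well-defined on this data, and a single interval-arithmetic comparison confirms that $(\epsilon, M)$ satisfies the threshold \eqref{eq:M_treshold_delta}, so that the uniform tail bounds of Lemmas \ref{lem:resolvent_estimate_ode}, \ref{lem:R_n_norm} and \ref{lem:Delta_n_inverse} may legitimately be used inside $Z_1$ and $Z_2$. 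Each of the quantities $Y_0, Z_0, Z_1, Z_2$ is then the sum of a finite part (a numerical linear-algebra computation on vectors and matrices indexed by $|\beta| \leq 2N$ or $|\beta| \leq M$) and an explicit analytic tail constant depending only on $\norm{D}$, $\norm{\mathbf{1}}$, $\Re(\lambda_n)$, $\epsilon$ and $M$; every such factor is representable in finite precision and rigorously enclosed by IntLab.

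With numerical upper bounds $Y_0, Z_0, Z_1, Z_2$ in hand, I would form the radii polynomial
\[
p(r) = Z_2 r^2 - (1 - Z_0 - Z_1) r + Y_0
\]
and verify the two sign conditions $1 - Z_0 - Z_1 > 0$ and $(1 - Z_0 - Z_1)^2 - 4 Y_0 Z_2 > 0$, which together place two positive real roots $r_- \leq r_+$ of $p$ and force $p \leq 0$ on $[r_-, r_+]$. Because $\hat{x}$ was constructed from the exact recursion \eqref{eq:recursive_xhat_1}--\eqref{eq:recursive_xhat_2}, the defect $\zerops(\hat{x})$ only has nonzero components for $N < |\beta| \leq 2N$ and so $Y_0$ is of the order of machine precision, which is what forces $r_-$ to be of order $10^{-13}$. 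The proposition is then concluded by evaluating $p$ at $r_{\text{\tt PSA}} = 1.110565565384011 \times 10^{-13}$ in interval arithmetic and checking that $p(r_{\text{\tt PSA}}) \leq 0$; Theorem \ref{thm:radii_polynomial} then yields a true zero of $\zerops$ in $B(\hat{x}, r_{\text{\tt PSA}})$.

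The main obstacle is not conceptual but numerical: the contraction condition $Z_0 + Z_1 < 1$ must be verified with enough slack that the interval enclosure of $1 - Z_0 - Z_1$ is safely bounded away from zero, since this quantity controls both the existence of real roots of $p$ and the sharpness of the resulting $r_{\text{\tt PSA}}$. The delicate piece is the tail portion of $Z_1$, which depends inversely on $\Re(\lambda_n)M - \alpha \norm{D}\norm{\mathbf{1}}/\epsilon$; this is precisely why $\epsilon$ and $M$ have to be chosen jointly to satisfy \eqref{eq:M_treshold_delta} with comfortable margin while keeping the finite enumeration over $|\beta| \leq M$ computationally tractable.
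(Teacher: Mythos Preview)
Your proposal is correct and follows essentially the same route as the paper: compute interval enclosures of $Y_0, Z_0, Z_1, Z_2$ via Lemmas \ref{lem:Y0_ps}--\ref{lem:Z2_ps} with the parameter choices $\epsilon = \placeeps$, $M = \placeM$, verify that the radii polynomial $p(r)$ is negative at $r_{\text{\tt PSA}}$, and conclude by Theorem \ref{thm:radii_polynomial}. Your added remarks on the discriminant condition and on why $Y_0$ comes out at machine precision are helpful elaborations but do not change the argument.
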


\subsection{Validating the unstable manifold in the DDE}
\label{sec:validation_wright_dde}

We next use the Radii Polynomial Theorem to prove that the map $F$ defined in \eqref{eq:zero_map_wright_dde} has a true zero in a neighbourhood of the approximate solution $\hat{x}$.

We recall that for $\frac{\pi}{2} < \alpha  < \frac{5 \pi}{2}$, the function \eqref{eq:ce wright} has exactly two zeroes in the right half of the complex plane; as a preparatory step, we obtain interval enclosures of these two zeroes by first computing a numerical approximation $\hat{z}$ for each of them  
and subsequently applying the the Radii Polynomial Theorem (Lemma \ref{thm:contraction_mapping_scalar}). For $\alpha = \placealpha$, we do this in the accompanying \github \, code and  
collect the result in the following lemma: 

\begin{lemma} \label{lem:eigenvalues_dde_explicit}
For $\alpha = \placealpha$, the two zeroes $\lambda_{+}, \lambda_{-}$ of \eqref{eq:ce wright} in the right half of the complex plane are enclosed in the intervals
\begin{align*}
\lambda_{+} & \in [0.172816002840000 + 1.673686413740842i, 0.172816002840000 + 1.673686413740843i] \\
\lambda_{-} & \in [0.172816002840000 - 1.673686413740842i, 0.172816002840000 - 1.673686413740843i]
\end{align*}
\end{lemma}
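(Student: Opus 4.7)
The plan is to apply the scalar Radii Polynomial Theorem (Lemma~\ref{thm:contraction_mapping_scalar}) to the characteristic function
\[
\Delta(\lambda) = \lambda + 2 e^{-\lambda}
\]
at a numerical approximation $\hat{z}$ of each of the two unstable roots. First I would compute (by a few iterations of Newton's method on $\Delta$, starting from a standard initial guess in the right half plane) high-precision floating point approximations $\hat{z}_+ \approx 0.172816002840 + 1.673686413741 i$ and $\hat{z}_- = \overline{\hat{z}_+}$. Because $\alpha = 2 \in \mathbb{R}$, the function $\Delta$ satisfies $\Delta(\overline{\lambda}) = \overline{\Delta(\lambda)}$, so it suffices to validate $\hat{z}_+$ and then invoke symmetry to obtain $\lambda_-$ as the complex conjugate of $\lambda_+$ with the complex-conjugate interval enclosure.

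Next I would set up the scalar radii polynomial. Using
\[
\Delta'(\lambda) = 1 - 2 e^{-\lambda}, \qquad \Delta''(\lambda) = 2 e^{-\lambda},
\]
I would compute, in interval arithmetic (IntLab), the quantities
\[
Y_0 \geq \bigl| \Delta'(\hat{z}_+)^{-1} \Delta(\hat{z}_+) \bigr|, \qquad
Z_2 \geq \sup_{z \in \overline{B(\hat{z}_+, r_\ast)}} \bigl| \Delta'(\hat{z}_+)^{-1} \Delta''(z) \bigr|
\]
for a fixed a-priori radius $r_\ast > 0$ (say $r_\ast = 10^{-10}$), where the bound on $\Delta''$ follows from $|e^{-z}| \leq e^{-\Re(\hat{z}_+) + r_\ast}$ on the disk $B(\hat{z}_+, r_\ast)$. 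I would then search for $r \in (0, r_\ast]$ satisfying the radii polynomial inequality
\[
\tfrac{1}{2} Z_2 r^2 - r + Y_0 \leq 0,
\]
so that the Newton-like map $T(z) = z - \Delta'(\hat{z}_+)^{-1} \Delta(z)$ is a contraction on $\overline{B(\hat{z}_+, r)}$, yielding a unique zero $\lambda_+$ of $\Delta$ in that ball by the contraction mapping principle.

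The final step is bookkeeping: intersect $B(\hat{z}_+, r)$ with the stated rectangular interval and verify containment rigorously in IntLab, then conjugate to obtain the enclosure for $\lambda_-$. To ensure these are the \emph{only} two zeroes of $\Delta$ in the right half plane, I would cite the standard result for Wright's equation (e.g.\ \cite{Hayes50}, \cite[Chapter XI]{Diekmann95}) already referenced earlier in the paper, which guarantees exactly two unstable roots for $\alpha \in (\pi/2, 5\pi/2)$ and in particular for $\alpha = 2$.

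The main obstacle is purely numerical rather than conceptual: obtaining tight enough interval enclosures of $e^{-z}$ for complex $z$ so that $Y_0$ is on the order of $10^{-14}$ or smaller, which is necessary to achieve the tightness of the stated intervals (agreement to roughly $15$ decimal digits). This requires computing $\hat{z}_+$ to near machine precision and using IntLab's directed rounding carefully; the rest of the argument is routine application of the Newton–Kantorovich style contraction theorem in one complex variable.
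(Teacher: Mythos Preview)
Your proposal is correct and matches the paper's approach: compute a numerical approximation $\hat{z}$ of each root and then validate it via the scalar Newton--Kantorovich contraction argument (Theorem~\ref{thm:contraction_mapping_scalar}), with the count of unstable roots coming from the classical Hayes-type result already cited. One small discrepancy: the paper's Theorem~\ref{thm:contraction_mapping_scalar} uses the \emph{true} Newton map $F(z)=z-\Delta(z)/\Delta'(z)$ with bounds $Y=|F(\hat z)-\hat z|$ and $Z=\sup|DF|$, whereas you describe the \emph{frozen-derivative} variant $T(z)=z-\Delta'(\hat z)^{-1}\Delta(z)$ with a $\tfrac12 Z_2 r^2$ radii polynomial (closer in spirit to Theorem~\ref{thm:radii_polynomial}); either works, but if you cite Lemma~\ref{thm:contraction_mapping_scalar} you should use its $Y,Z,r_0=Y/(1-Z)$ formulation rather than the $Y_0,Z_2$ one.
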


\medskip

Next we compute that the derivative of the map $\zerodde$ defined in \eqref{eq:zero_map_wright_dde} is given by 
\begin{equation} \label{eq:DF}
\begin{aligned}
\left(D\zerodde (x)v\right)_\beta = 
\begin{cases}
v_\beta \qquad &\mbox{if } | \beta | \leq 1, \\
\Delta (\dotp{\lambda, \beta})v_\beta + \alpha( \blowupscalardde (x)*v + \blowupscalardde (v)*x)_\beta \qquad &\mbox{if } \left| \beta \right| \geq 2,
\end{cases}
\end{aligned}
\end{equation}
for $x, v \in \mathcal{S}_2$. 
Using formula \eqref{eq:DF}, we compute a numerical approximation of $\pi^N DF(\hat{x}) \pi^N$ and we denote this numerical approximation by $\numder$. We also compute a numerical inverse to $\numder$ and denote the result by $\numinv$. 
Next we define the operators $A, A^\dagger: \cS_2 \to \cS_2$ by the formulas
\begin{subequations}
\begin{align}
\label{eq:A}
(Av)_\beta & = 
\begin{cases}
(\numinv v)_\beta & \abs{\beta} \leq N, \\
\Delta(\dotp{\lambda, \beta})^{-1}v_\beta & \abs{\beta} > N,
\end{cases} \\
\label{eq:A_dagger}
(A^\dagger v)_\beta & = 
\begin{cases}
(\numder v)_\beta & \abs{\beta} \leq N, \\
\Delta(\dotp{\lambda, \beta})v_\beta & \abs{\beta} > N. 
\end{cases}
\end{align}
\end{subequations}
For this choice of $A$ and $A^{\dagger}$, we will provide explicit expressions for the quantities in \eqref{eq:constants_banach_radii} that are used in the Radii Polynomial Theorem. We stress that,  
once we have fixed a parameter $\frac{\pi}{2} < \alpha < \frac{5 \pi}{2}$ and a finite sequence $\hat{x} \in \pi_N \cS_2$, 
all the obtained quantities are explicitly computable on a computer. 

\begin{lemma}[$Y_0$ bound] \label{lem:Y0_dde}
Let $\zerodde$ be as defined in \eqref{eq:zero_map_wright_dde}, let $\hat{x}  \in \pi_N \cS_2$ be a pseudo-finite sequence and let the operator $A$ be defined via \eqref{eq:A}. Let 
\begin{equation} \label{eq:Y0 bound}
Y_0 :=  
\sum_{\abs{\beta} = 0}^N \norm{(\numinv \zerodde (\hat{x}))_\beta} + \sum_{\abs{\beta} = N+1}^{2N} \norm{\Delta(\dotp{\lambda, \beta})^{-1} (\alpha \blowupscalardde (\hat{x})*\hat{x})_\beta}
\end{equation}
then $\norm{A\zerodde (\hat{x})} \leq Y_0$. 
\end{lemma}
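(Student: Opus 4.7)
My plan is to mirror the argument of Lemma \ref{lem:Y0_ps} essentially verbatim, since the DDE case has the same algebraic structure as the pseudospectral ODE case (the only change is that $\Delta$ replaces $\Delta_n$ and $\blowupscalardde$ replaces $\blowupscalarps$). The key observation driving the proof is that $\hat{x}$ is supported on $\{|\beta| \leq N\}$, which forces $F(\hat{x})$ to be supported on a finite set of modes, so that the infinite sum defining $\norm{A\zerodde (\hat{x})}$ collapses to a finite sum.

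First I would unpack what $\zerodde(\hat{x})_\beta$ is on each range of $\beta$, using the definition \eqref{eq:zero_map_wright_dde}. For $\beta = 0$ and $|\beta| = 1$, the components of $\hat{x}$ are chosen to be $0$ and $\xi_\beta$ respectively in the construction of the numerical candidate, so one expects these to contribute only through $\numinv F(\hat{x})$ on the finite part (they are included in the first sum of $Y_0$). For $|\beta| \geq 2$, one has
\[
\zerodde(\hat{x})_\beta = \Delta(\dotp{\lambda, \beta})\hat{x}_\beta + \alpha (\hat{x} \ast \blowupscalardde(\hat{x}))_\beta.
\]
Since $\hat{x}_\beta = 0$ for $|\beta| > N$, the first term vanishes for $|\beta| > N$. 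Moreover, since $\blowupscalardde$ defined in \eqref{eq:def R} acts diagonally, $\blowupscalardde(\hat{x})$ also has support in $\{|\beta| \leq N\}$, and the convolution $\hat{x} \ast \blowupscalardde(\hat{x})$ is therefore supported in $\{|\beta| \leq 2N\}$. Consequently
\[
\zerodde(\hat{x})_\beta =
\begin{cases}
\alpha(\hat{x} \ast \blowupscalardde(\hat{x}))_\beta & \text{if } N+1 \leq |\beta| \leq 2N, \\
0 & \text{if } |\beta| \geq 2N+1.
\end{cases}
\]

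Second, I would apply the definition \eqref{eq:A} of $A$ to split $A\zerodde(\hat{x})$ according to the decomposition $X = X_N \oplus X_\infty$: on the finite part, $A$ acts as $\numinv$, while on the tail, $A$ acts diagonally as multiplication by $\Delta(\dotp{\lambda, \beta})^{-1}$. Applying the triangle inequality to the $\ell^1$-norm then yields
\[
\norm{A\zerodde(\hat{x})} \leq \sum_{|\beta|=0}^{N} \abs{(\numinv \zerodde(\hat{x}))_\beta} + \sum_{|\beta|=N+1}^{2N} \abs{\Delta(\dotp{\lambda,\beta})^{-1} (\alpha\, \hat{x} \ast \blowupscalardde(\hat{x}))_\beta},
\]
which is exactly $Y_0$ as defined in \eqref{eq:Y0 bound}, completing the proof.

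I do not expect a genuine obstacle in this argument; the only subtlety worth being careful about is that one must use both that $\hat{x}$ is finitely supported \emph{and} that $\blowupscalardde$ preserves the support of a sequence (being diagonal), in order to conclude that the convolution has support bounded by $2N$. For $|\beta| \geq 2N+1$, the fact that $F(\hat{x})_\beta$ vanishes exactly then ensures no tail contribution in the $A\zerodde(\hat{x})$ sum, so the inequality is in fact not wasteful. The $\ell^1$-triangle inequality applied to $\numinv$ on the finite part is absorbed into the definition of $Y_0$ (the first sum is written component-wise rather than using an operator-norm bound, which keeps $Y_0$ tight and numerically computable).
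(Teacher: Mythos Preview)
Your proposal is correct and follows essentially the same approach as the paper's own proof: both use the finite support of $\hat{x}$ to conclude that $\Delta(\dotp{\lambda,\beta})\hat{x}_\beta$ vanishes for $|\beta|>N$ and that the convolution $\hat{x}*\blowupscalardde(\hat{x})$ vanishes for $|\beta|>2N$, and then apply the definition of $A$ to split the $\ell^1$-norm into the two finite sums. Your additional remark that $\blowupscalardde$ is diagonal (hence support-preserving) makes explicit a step the paper leaves implicit.
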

\begin{proof}
Since $\hat{x} \in \pi_N \cS_2$ (i.e. $\hat{x}_\beta = 0$ for $\left| \beta \right| > N$), it holds that 
\[
\Delta(\dotp{\lambda, \beta}) \hat{x}_\beta = 0 \qquad \mbox{for } | \beta | \geq N +1 
\]
and it additionally holds that  
\[
(\hat{x} \ast \blowupscalardde (\hat{x}))_\beta = 0 \qquad \mbox{for } | \beta | \geq 2N + 1. 
\]
This implies that 
\begin{align*}
F(\hat{x})_\beta = 
\begin{cases}
(\hat{x} \ast \blowupscalardde (\hat{x}))_\beta \qquad &\mbox{if } N+1 \leq | \beta | \leq 2N, \\
0 \qquad &\mbox{if } | \beta | \geq 2N+1.
\end{cases}
\end{align*}
From here, it follows that
\[ \norm*{A\zerodde(\hat{x})} \leq \sum_{\abs{\beta} = 0}^N \norm*{(\numinv \zerodde (\hat{x}))_\beta} + \sum_{\abs{\beta} = N+1}^{2N} \norm{\Delta(\dotp{\lambda, \beta})^{-1} (\alpha \blowupscalardde (\hat{x})*\hat{x})_\beta} \]
and the claim follows. 
\end{proof}

\begin{lemma}[$Z_0$ bound] Let the operators $A, A^\dagger$ be defined via \eqref{eq:A}--\eqref{eq:A_dagger}
and let 
\[ 
Z_0 := \norm*{I - \numinv \numder } . 
\]
Then $\norm{I - A A^\dagger} \leq Z_0$. 
\end{lemma}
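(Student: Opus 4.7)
The plan is to mirror the argument used in Lemma \ref{lem:Z0_ps} for the pseudospectral case, since the operators $A, A^\dagger$ on $\mathcal{S}_2$ defined in \eqref{eq:A}--\eqref{eq:A_dagger} have the exact same block structure as $A_n, A_n^\dagger$, with the characteristic function $\Delta$ playing the role of $\Delta_n$. The key observation is that, with respect to the decomposition $\mathcal{S}_2 = X_N \oplus X_\infty$ where $X_N = \pi_N \mathcal{S}_2$ and $X_\infty = \pi_\infty \mathcal{S}_2$, both operators are block diagonal.

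First I would record the block structure. Since the defining formulas \eqref{eq:A} and \eqref{eq:A_dagger} act componentwise in $\beta$ above the cutoff $N$, and the finite-dimensional blocks $\numinv$ and $\numder$ act only on indices $\abs{\beta} \leq N$, one can write
\[
A = \begin{pmatrix} \numinv & 0 \\ 0 & \pi_\infty A \pi_\infty \end{pmatrix}, \qquad
A^\dagger = \begin{pmatrix} \numder & 0 \\ 0 & \pi_\infty A^\dagger \pi_\infty \end{pmatrix},
\]
so that
\[
A A^\dagger = \begin{pmatrix} \numinv \numder & 0 \\ 0 & \pi_\infty A \pi_\infty A^\dagger \pi_\infty \end{pmatrix}.
\]

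Next I would verify that the tail block equals the identity on $X_\infty$. For any $v \in X_\infty$ and $\abs{\beta} > N$, the definitions give $(A^\dagger v)_\beta = \Delta(\dotp{\lambda,\beta}) v_\beta$ and then $(A A^\dagger v)_\beta = \Delta(\dotp{\lambda,\beta})^{-1}\Delta(\dotp{\lambda,\beta}) v_\beta = v_\beta$. The non-resonance condition \eqref{eq:non-resonance} in Hypothesis \ref{hyp:DDE} guarantees $\Delta(\dotp{\lambda,\beta}) \neq 0$ for $\abs{\beta} \geq 2$, so these reciprocals are well-defined.

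Consequently
\[
I - A A^\dagger = \begin{pmatrix} I - \numinv \numder & 0 \\ 0 & 0 \end{pmatrix},
\]
and by the block-diagonal form the operator norm is simply the norm of the nonzero block, giving $\norm{I - A A^\dagger} \leq \norm{I - \numinv \numder} = Z_0$. No step here presents a genuine obstacle; the only point that needs brief mention is that the non-resonance hypothesis makes the infinite-tail block well-defined, and otherwise the argument is a direct transcription of the pseudospectral proof with $\Delta_n$ replaced by $\Delta$.
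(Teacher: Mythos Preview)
Your proposal is correct and follows essentially the same approach as the paper: both arguments write $A$ and $A^\dagger$ in block-diagonal form with respect to the splitting $\mathcal{S}_2 = X_N \oplus X_\infty$, observe that the tail block of $A A^\dagger$ is the identity on $X_\infty$, and conclude that $I - A A^\dagger$ reduces to the finite block $I - \numinv\,\numder$. Your explicit mention of the non-resonance condition to justify invertibility of $\Delta(\dotp{\lambda,\beta})$ is a small clarification the paper leaves implicit, but otherwise the proofs are identical.
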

\begin{proof}
Let $X_N = \pi_N \cS_2$ and $X_\infty = \pi_\infty \cS_2$; then, with respect to the decomposition $X = X_N \oplus X_\infty$, the operators $A, A^\dagger$ are of the form
\begin{equation} \label{eq:A diagonal}
A = 
\begin{pmatrix}
\numinv & 0 \\
0 & \pi_\infty A \pi_\infty
\end{pmatrix}, \qquad 
A^\dagger = 
\begin{pmatrix}
\numder & 0 \\
0 & \pi_\infty A^\dagger \pi_\infty
\end{pmatrix}. 
\end{equation}
This implies that 
\[ A A^\dagger = \begin{pmatrix}
\numinv \numder & 0 \\
0 & \pi_\infty A \pi_\infty A^\dagger \pi_\infty
\end{pmatrix}.
\]
The identities \eqref{eq:A}--\eqref{eq:A_dagger} imply that $\pi_\infty A \pi_\infty A^\dagger \pi_\infty$ is equal to the identity operator on $X_\infty$. Hence 
\[
I - A A^\dagger = 
\begin{pmatrix}
I - \numinv \numder & 0 \\
0 & 0
\end{pmatrix}
\]
and therefore
\[ 
\norm{I - A A^\dagger} \leq \norm{I - \numinv \numder }, 
\]
as claimed. 
\end{proof}

We next let
\[
\lambda = (\lambda_{+}, \lambda_{-})
\]
be the vector that consists of the two roots of zeroes of the function \eqref{eq:ce wright} in the right half of the complex plane (cf. Lemma \ref{lem:eigenvalues_dde_explicit}). Since the numbers $\lambda_{+}$ and  $\lambda_{-}$ are complex conjugates, it holds that $\Re(\lambda_{+}) = \Re(\lambda_{-})$. In the following, we will slightly abuse notation and write 
\begin{equation} \label{eq:def_re_lambda}
\Re(\lambda) : = \Re(\lambda_{+}) = \Re(\lambda_{-}). 
\end{equation}

\begin{lemma}
\label{lem:delta_bound}
Fix $ \frac{\pi}{2} < \alpha < \frac{5 \pi}{2}$, let $\lambda = (\lambda_{+}, \lambda_{-})$ be the vector of zeroes of \eqref{eq:ce wright} in the right half of the complex plane and let $\Re(\lambda)$ be as in \eqref{eq:def_re_lambda}. If $M \in \mathbb{N}$ is such that 
\begin{equation} \label{eq:N_treshhold_dde}
\Re(\lambda) M - \alpha e^{-\Re(\lambda)M} > 0,
\end{equation}
then it holds that 
\begin{equation} \label{eq:estimate_delta_inverse}
 \sup_{\abs*{\beta} > M} \setof*{\abs{\Delta(\langle \lambda, \beta \rangle )^{-1}}} \leq \frac{1}{\Re(\lambda) M - \alpha e^{-\Re(\lambda)M}}. 
\end{equation}
\end{lemma}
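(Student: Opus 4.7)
The plan is to estimate $|\Delta(\langle \lambda, \beta \rangle)|$ from below using the reverse triangle inequality on the explicit form $\Delta(z) = z + \alpha e^{-z}$, and then exploit monotonicity in $|\beta|$ to reduce the supremum to its value at $|\beta| = M$.

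First, I would fix $\beta \in \mathbb{N}^2$ with $|\beta| > M$ and set $z = \langle \lambda, \beta \rangle$. Because $\lambda_{+}$ and $\lambda_{-}$ are complex conjugates with common real part $\Re(\lambda)$, one has
\[
\Re(z) = \Re(\lambda)(\beta_1 + \beta_2) = \Re(\lambda)|\beta|,
\]
and hence $|z| \geq \Re(z) = \Re(\lambda)|\beta|$ while $|\alpha e^{-z}| = \alpha e^{-\Re(z)} = \alpha e^{-\Re(\lambda)|\beta|}$. By the reverse triangle inequality,
\[
|\Delta(z)| \;=\; |z + \alpha e^{-z}| \;\geq\; |z| - |\alpha e^{-z}| \;\geq\; \Re(\lambda)|\beta| - \alpha e^{-\Re(\lambda)|\beta|}.
\]

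Next I would observe that the function $f(t) = \Re(\lambda)\,t - \alpha e^{-\Re(\lambda) t}$ is strictly increasing on $[0, \infty)$ since $f'(t) = \Re(\lambda)(1 + \alpha e^{-\Re(\lambda)t}) > 0$ (recall $\Re(\lambda) > 0$ because $\lambda_{\pm}$ lie in the right half plane). Therefore, for $|\beta| > M$,
\[
\Re(\lambda)|\beta| - \alpha e^{-\Re(\lambda)|\beta|} \;\geq\; \Re(\lambda)M - \alpha e^{-\Re(\lambda)M},
\]
and the right-hand side is strictly positive by hypothesis \eqref{eq:N_treshhold_dde}. Combining the two displays and taking reciprocals yields
\[
\frac{1}{|\Delta(\langle \lambda, \beta \rangle)|} \;\leq\; \frac{1}{\Re(\lambda)M - \alpha e^{-\Re(\lambda)M}},
\]
uniformly in $|\beta| > M$, which is precisely the bound \eqref{eq:estimate_delta_inverse}.

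There is essentially no hard step here: the argument is elementary once one notices that $\Re(z)$ scales linearly with $|\beta|$ (because the two unstable eigenvalues are conjugate) and that the lower bound $\Re(\lambda) t - \alpha e^{-\Re(\lambda)t}$ is monotone in $t$. The only subtle point to be careful about is that the hypothesis \eqref{eq:N_treshhold_dde} is needed twice: once to guarantee that the estimated lower bound is strictly positive (so that taking reciprocals is legitimate and the bound is finite), and implicitly to ensure that $\Delta(\langle \lambda, \beta \rangle) \neq 0$ for every $|\beta| > M$, which is the non-resonance condition at those indices.
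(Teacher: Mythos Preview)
Your proof is correct and follows essentially the same approach as the paper: exploit the conjugate structure to get $\Re(\langle\lambda,\beta\rangle)=\Re(\lambda)\,|\beta|$, obtain the lower bound $\Re(\lambda)|\beta|-\alpha e^{-\Re(\lambda)|\beta|}$ on $|\Delta(\langle\lambda,\beta\rangle)|$, and then use monotonicity in $|\beta|$. The only cosmetic difference is in how that lower bound is reached: the paper bounds $|\Delta(z)|\geq \Re(\Delta(z))$ and then estimates the cosine term, whereas you use the reverse triangle inequality $|\Delta(z)|\geq |z|-\alpha|e^{-z}|$ together with $|z|\geq\Re(z)$; both routes land on the same expression in one line.
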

\begin{proof}
The estimate $\abs{\mu} \geq \abs{\Re(\mu)}$ for $\mu \in \mathbb{C}$ in particular implies that 
\begin{equation} \label{eq:estimate_delta_step}
\begin{aligned}
\abs*{\Delta \langle \lambda, \beta \rangle } &\geq \abs{\Re\left(\Delta \langle \lambda, \beta \rangle \right)}
\end{aligned}
\end{equation}
Since the eigenvalues $\lambda_{+}, \lambda_{-}$ are complex conjugates of each other, it holds that 
\[
\Re(\dotp{\lambda, \beta}) = \Re(\lambda) \abs{\beta} \qquad \mbox{and} \qquad \Im(\dotp{\lambda, \beta}) = \Im(\lambda_+) (\beta_1 - \beta_2) 
\]
for every $\beta \in \mathbb{N}^2$. 
For $\beta \in \mathbb{N}^2$ with $\abs{\beta} \geq M$, this implies that 
\begin{align*}
\Re\left(\Delta \langle \lambda, \beta \rangle \right) & = \Re(\dotp{\lambda, \beta}) - \alpha \Re\left(e^{-\dotp{\lambda, \beta}} \right)\\ 
& = \Re(\lambda) \abs{\beta} - \alpha e^{-\Re(\lambda) \abs{\beta}} \cos(\Im(\lambda_+) (\beta_2 - \beta_1)) \\
& \geq \Re(\lambda) \abs{\beta} - \alpha e^{-\Re(\lambda) \abs{\beta}} \\
& \geq \Re(\lambda) M - \alpha e^{-\Re(\lambda) M}. 
\end{align*}
Thus, if \eqref{eq:N_treshhold_dde} holds, $\Re\left(\Delta \langle \lambda, \beta \rangle \right) \geq 0$ and we deduce from \eqref{eq:estimate_delta_step} that 
\begin{align*}
\abs*{\Delta \langle \lambda, \beta \rangle } &\geq \Re(\Delta(\dotp{\lambda, \beta})) \\
& \geq \Re(\lambda) M - \alpha e^{-\Re(\lambda) M}
\end{align*}
which implies the claim.
\end{proof}

In practice, we find that the inequality \eqref{eq:N_treshhold_dde} is satisfied for relatively low values of $M$. In particular, if we let $N \in \mathbb{N}$ be such that the computed approximation $\hat{x}$ satisfies $\hat{x} \in \pi_N S_2$, then we practically find that the inequality is satisfied for $M = N$. Motivated by this, we will in the following apply Proposition \ref{lem:delta_bound} with $M = N$. 

\begin{lemma}[$Z_1$ bound]
\label{lem:wright_Z1}
Let $\zerodde$ be as defined in \eqref{eq:zero_map_wright_dde}, let $\hat{x}  \in \pi_N \cS_2$ be a pseudo-finite sequence and let the operators $A, A^\dagger$ be defined via \eqref{eq:A}--\eqref{eq:A_dagger}. Moreover, let $\Re(\lambda)$ be as in \eqref{eq:def_re_lambda}, let $N \in \mathbb{N}$ be satisfying \eqref{eq:N_treshhold_dde} and define
\begin{equation}
\label{eq:wright_Z1}
Z_1 := \frac{2 \alpha \norm{\hat{x}}}{\Re(\lambda)N - \alpha e^{-\Re(\lambda)N}} + \norm*{\numinv \left(\pi_N D \zerodde (\hat{x}) \pi_N - \numder \right)}.
\end{equation}
Then
\[ 
\norm{A(D \zerodde (\hat{x}) - A^\dagger)} \leq Z_1. 
\]
\end{lemma}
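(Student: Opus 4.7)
The proof mirrors the strategy used for Lemma \ref{lem:Z1_ps} in the pseudospectral case. The plan is to split the operator $A(D\zerodde(\hat{x}) - A^\dagger)$ into its finite and tail parts via the decomposition $I = \pi_N + \pi_\infty$ and bound each of the two pieces separately, recovering the two terms in \eqref{eq:wright_Z1}.

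For the finite part, I would use the block structure \eqref{eq:A diagonal}, which gives $\pi_N A = \numinv \pi_N$ and $\pi_N A^\dagger = \numder \pi_N$. The crucial observation is that, because $\hat{x} \in \pi_N \cS_2$, the convolution terms $r(\hat{x}) \ast v$ and $r(v) \ast \hat{x}$ appearing in \eqref{eq:DF} only involve entries $v_\gamma$ with $|\gamma| \leq N$ when we restrict the output to $|\beta| \leq N$ (since $|\beta_1| + |\beta_2| = |\beta| \leq N$). Hence $\pi_N D\zerodde(\hat{x}) = \pi_N D\zerodde(\hat{x}) \pi_N$, which immediately yields
\[
\norm{\pi_N A(D\zerodde(\hat{x}) - A^\dagger)} \leq \norm{\numinv(\pi_N D\zerodde(\hat{x})\pi_N - \numder)}.
\]

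For the tail part, I would compute $(A(D\zerodde(\hat{x}) - A^\dagger) v)_\beta$ for $|\beta| > N$ directly. Using the definitions of $A$ and $A^\dagger$ on the tail, the diagonal $\Delta(\dotp{\lambda,\beta})$ factors cancel, leaving
\[
(\pi_\infty A(D\zerodde(\hat{x}) - A^\dagger) v)_\beta = \alpha \Delta(\dotp{\lambda,\beta})^{-1}(r(\hat{x}) \ast v + r(v) \ast \hat{x})_\beta.
\]
Then I would take $\ell^1$ norms, pull out the supremum $\sup_{|\beta|>N} |\Delta(\dotp{\lambda,\beta})^{-1}|$, and use the Banach algebra property of $\ell^1$ under convolution. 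To absorb the $r$ factors, I would observe that since $\Re(\lambda) > 0$ and $\beta \in \nn^2$, one has $|e^{-\dotp{\lambda,\beta}}| \leq 1$, so $\norm{r(x)} \leq \norm{x}$ term by term. This produces a bound of $2\alpha \norm{\hat{x}} \sup_{|\beta|>N}|\Delta(\dotp{\lambda,\beta})^{-1}|$, and invoking Lemma \ref{lem:delta_bound} with $M = N$ (justified by the hypothesis that $N$ satisfies \eqref{eq:N_treshhold_dde}) replaces the supremum with $(\Re(\lambda)N - \alpha e^{-\Re(\lambda) N})^{-1}$. Adding the two pieces gives exactly $Z_1$.

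The main obstacle is really just the bookkeeping in step two, namely verifying the identity $\pi_N D\zerodde(\hat{x}) = \pi_N D\zerodde(\hat{x}) \pi_N$ carefully enough to justify replacing an infinite-dimensional operator by a finite matrix product; the rest is a straightforward application of the triangle inequality, the convolution estimate, and Lemma \ref{lem:delta_bound}.
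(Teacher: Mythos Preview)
Your proposal is correct and follows essentially the same approach as the paper's proof: split via $\pi_N + \pi_\infty$, use the block structure of $A$ and $A^\dagger$ together with $\pi_N D\zerodde(\hat{x}) = \pi_N D\zerodde(\hat{x})\pi_N$ for the finite part, cancel the diagonal $\Delta$ factors on the tail, and finish with the Banach algebra estimate, $\norm{r}\leq 1$, and Lemma~\ref{lem:delta_bound}. Your explicit justification of $\pi_N D\zerodde(\hat{x}) = \pi_N D\zerodde(\hat{x})\pi_N$ via the support of the convolution is slightly more detailed than the paper's, but otherwise the arguments coincide.
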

\begin{proof}
For any sequence $x \in \mathcal{S}_2$, it holds that $\norm{x} \leq \norm{\pi_N x} + \norm{\pi_\infty x}$; 
with $x =  A \left(D\zerodde (\hat{x}) - A^\dagger\right)$, this gives that 
\begin{equation} \label{eq:z1_dde_step1}
\norm*{A \left(D\zerodde (\hat{x}) - A^\dagger\right)} \leq 
\norm*{\pi_N \left(A \left(D\zerodde (\hat{x}) - A^\dagger\right)\right)} + 
\norm*{\pi_\infty \left(A \left(D\zerodde (\hat{x}) - A^\dagger\right)\right)}. 
\end{equation}
The expressions for $A, A^\dagger$ in \eqref{eq:A}--\eqref{eq:A_dagger} and the expression for $DF(\hat{x})$ in \eqref{eq:DF} imply that
\begin{align*}
\pi_N A = \numinv \pi_N, \qquad 
\pi_N A^\dagger = \numder \pi_N, \qquad 
\pi_N DF(\hat{x}) = \pi_N DF(\hat{x}) \pi_N
\end{align*}
so that we can rewrite the first term  on the right hand side of \eqref{eq:z1_dde_step1} as
\begin{equation} \label{eq:z1_dde_step2}
\begin{aligned}
\norm*{\pi_N \left(A \left(D\zerodde (\hat{x}) - A^\dagger\right)\right)} &= \norm*{\numinv \pi_N\left(D\zerodde (\hat{x}) - A^\dagger\right)} \\
& = \norm*{\numinv \left( \pi_N D\zerodde (\hat{x}) \pi_N - \numder \right)}.
\end{aligned}
\end{equation}
To bound the second term in the right hand side of \eqref{eq:z1_dde_step1}, we fix a sequence $v \in \mathcal{S}_2$ and 
compute that
\begin{align*}
\left(\pi_\infty \left(A \left(D \zerodde (\hat{x}) - A^\dagger\right)\right)v\right)_\beta  &= \left( \pi_\infty  A D \zerodde (\hat{x}) v \right)_\beta - \left( \pi_\infty A A^\dagger v \right)_\beta \\
&= \Delta^{-1} ( \dotp{\lambda, \beta}) \left[ \Delta(\dotp{\lambda, \beta}) v_\beta + \alpha( \blowupscalardde (\hat{x})*v + \blowupscalardde (v)*\hat{x})_\beta \right] - \Delta^{-1}(\dotp{\lambda, \beta}) \Delta(\dotp{\lambda, \beta}) v_\beta \\
& = \alpha \Delta^{-1} ( \dotp{\lambda, \beta}) (\blowupscalardde (\hat{x})*v + \blowupscalardde (v)*\hat{x})_\beta. 
\end{align*}
This implies that
\begin{equation*}
\begin{aligned}
\norm{\pi_\infty \left(A \left(D\zerodde(\hat{x}) - A^\dagger\right)\right)v} & = 
\sum_{\abs*{\beta} > N} \abs*{\alpha \Delta\left(\langle \lambda, \beta \rangle \right)^{-1} \left(\blowupscalardde (\hat{x})*v + \blowupscalardde(v)*\hat{x}\right)_\beta} \\
& \leq \alpha \sup_{\abs*{\beta} > N} \setof*{\abs*{\Delta(\langle \lambda, \beta \rangle)^{-1}}} \left(\sum_{\abs*{\beta} > N}\abs*{(\blowupscalardde(\hat{x})*v)_\beta} + \abs*{(\blowupscalardde(v)*\hat{x})_\beta}\right) \\
& \leq \alpha \sup_{\abs*{\beta} > N} \setof*{\abs*{\Delta(\langle \lambda, \beta \rangle)^{-1}}} \left(\norm*{\blowupscalardde(\hat{x})*v} + \norm*{\blowupscalardde(v)*\hat{x}}\right) \\
&  \leq \alpha \sup_{\abs*{\beta} > N} \setof*{\abs*{\Delta(\langle \lambda, \beta \rangle)^{-1}}} \left(\norm*{\blowupscalardde(\hat{x})}\norm{v} + \norm*{\blowupscalardde(v)}\norm*{\hat{x}}\right) 
\end{aligned}
\end{equation*}
which together with \eqref{eq:estimate_delta_inverse} gives that 
\begin{equation} \label{eq:z1_dde_step3}
\norm{\pi^\infty \left(A \left(D\zerodde(\hat{x}) - A^\dagger\right)\right)v} \leq \frac{\alpha}{\Re(\lambda)N - \alpha e^{-\Re(\lambda)N}} \left(\norm*{\blowupscalardde(\hat{x})}\norm{v} + \norm*{\blowupscalardde(v)}\norm*{\hat{x}}\right) . 
\end{equation}
Additionally, since $\Re(\lambda) > 0$, the estimate 
\begin{align*}
\norm{\blowupscalardde (v)} &\leq \sum_{\beta \in \mathbb{N}^2} \abs{ e^{-\dotp{\lambda, \beta}} v_\beta} \\
&\leq \sum_{\beta \in \mathbb{N}^2} \abs{v_\beta}
\end{align*}
implies that the operator norm of $\blowupscalardde$ is bounded by 
\begin{equation} \label{eq:r_estimate}
\norm{\blowupscalardde} \leq 1.
\end{equation}
Together with \eqref{eq:z1_dde_step3}, this means that we can estimate the operator norm of $\pi_\infty \left(A \left(D\zerodde(\hat{x}) - A^\dagger\right)\right)$ as 
\begin{equation}
\norm{\pi^\infty \left(A \left(D\zerodde(\hat{x}) - A^\dagger\right)\right)v} \leq \frac{2 \alpha}{\Re(\lambda) N- \alpha e^{-\Re(\lambda)N}} \norm{\hat{x}}. 
\end{equation}
Combining this with \eqref{eq:z1_dde_step1} and \eqref{eq:z1_dde_step2}, we conclude that \eqref{eq:wright_Z1} holds. 
\end{proof}

\begin{lemma}[$Z_2$ bound] \label{lem:Z2_dde}
Let $\zerodde$ be as defined in \eqref{eq:zero_map_wright_dde}, let $\hat{x}  \in \pi_N \cS_2$ be a pseudo-finite sequence and let the operators $A$ be defined via \eqref{eq:A}. Moreover, let $\Re(\lambda)$ be as in \eqref{eq:def_re_lambda}, let $N \in \mathbb{N}$ be satisfying \eqref{eq:N_treshhold_dde} and define the constant
\[ 
Z_2 = 2 \alpha \max \setof*{\norm*{\overbar{A}}, \frac{1}{\Re(\lambda) N- \alpha e^{-\Re(\lambda)N}}}. 
 \]
Then the estimate 
\[
\norm*{A\left(DF(\hat{x}+y) - DF(\hat{x})\right)} \leq Z_2 \norm*{y}. 
\]
holds for all $y \in \mathcal{S}$. 
\end{lemma}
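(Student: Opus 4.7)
The plan is to mirror the proof strategy of Lemma \ref{lem:Z2_ps}, bounding the composition $\norm{A(DF(\hat{x}+y) - DF(\hat{x}))}$ by bounding the operator norm of $A$ and the operator norm of $DF(\hat{x}+y) - DF(\hat{x})$ separately, then multiplying.

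First I would estimate $\norm{A}$ exactly as in the analogous step of Lemma \ref{lem:Z2_ps}. From the definition \eqref{eq:A}, the operator $A$ acts by $\numinv$ on the finite part $\pi_N \cS_2$ and diagonally by $\Delta(\langle \lambda, \beta \rangle)^{-1}$ on the tail $\pi_\infty \cS_2$. Splitting $\norm{Av}$ according to this block decomposition, bounding the finite block by $\norm{\numinv}$, and bounding the tail block by $\sup_{|\beta| > N} |\Delta(\langle \lambda, \beta \rangle)^{-1}|$ via Lemma \ref{lem:delta_bound}, I obtain
\[
\norm{A} \leq \max\!\left\{\norm{\numinv},\, \frac{1}{\Re(\lambda) N - \alpha e^{-\Re(\lambda) N}}\right\}.
\]

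Next I would use the explicit expression \eqref{eq:DF} for $D\zerodde$. Since $D\zerodde(x)v$ depends affinely on $x$ (through the convolution terms $\alpha(\blowupscalardde(x)*v + \blowupscalardde(v)*x)$) and the diagonal $\Delta$-term does not involve $x$ at all, the difference telescopes to
\[
((D\zerodde(\hat{x}+y) - D\zerodde(\hat{x}))v)_\beta =
\begin{cases}
0 & |\beta| \leq 1, \\
\alpha(\blowupscalardde(y)*v + \blowupscalardde(v)*y)_\beta & |\beta| \geq 2.
\end{cases}
\]
Using that the $\ell^1$ norm is submultiplicative under convolution, combined with the estimate $\norm{\blowupscalardde} \leq 1$ already established in \eqref{eq:r_estimate} (which holds trivially because $\Re(\lambda) > 0$), I get
\[
\norm{(D\zerodde(\hat{x}+y) - D\zerodde(\hat{x}))v} \leq 2\alpha \norm{y}\norm{v}.
\]

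Combining the two bounds and taking the supremum over $\norm{v} = 1$ delivers the stated $Z_2$ estimate. There is no substantial obstacle here; the argument is essentially a simpler version of the proof of Lemma \ref{lem:Z2_ps}, with the role played there by the supremum of $|((D - \langle \lambda_n, \beta \rangle I)^{-1}D\mathbf{1})_n|$ (which required the careful Neumann-series analysis of Lemma \ref{lem:R_n_norm}) now replaced by the trivial bound $|e^{-\langle \lambda, \beta \rangle}| \leq 1$ valid on the unstable spectrum.
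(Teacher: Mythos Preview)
Your proposal is correct and follows essentially the same approach as the paper's own proof: bound $\norm{A}$ via the block decomposition and Lemma~\ref{lem:delta_bound}, compute the difference $DF(\hat{x}+y)-DF(\hat{x})$ from \eqref{eq:DF}, and bound it using $\norm{\blowupscalardde}\leq 1$ from \eqref{eq:r_estimate}. The steps, the key inputs, and even the order of presentation match the paper almost exactly.
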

\begin{proof}
We first estimate the operator norm of $A$. To do so, we fix a sequence $v \in \mathcal{S}_2$ and derive from the definition of $A$ in \eqref{eq:A} that 
\begin{align*}
\norm*{Av} & = \sum_{\abs*{\beta} \leq N} \abs*{(\numinv v)_\beta} + \sum_{\abs*{\beta} > N} \abs*{\Delta (\langle \lambda, \beta \rangle )^{-1}v_\beta} \\
& \leq \norm*{\numinv} \sum_{\abs*{\beta} \leq N} \abs*{v_\beta} + \sup_{\abs*{\beta} > N} \setof*{\Delta(\langle \lambda, \beta \rangle )^{-1}} \sum_{\abs*{\beta} > N} \abs*{v_\beta} \\
& \leq \max \setof*{\norm*{\numinv}, \frac{1}{\Re(\lambda) N - \alpha e^{-\Re(\lambda) N}}} \norm*{v}
\end{align*}
where in the last step we used \eqref{eq:estimate_delta_inverse}. Hence we can estimate the operator norm of $A$ as
\begin{equation} \label{eq:estimate_A}
\norm*{A} \leq \max \setof*{\norm*{\numinv}, \frac{1}{\Re(\lambda) N- \alpha e^{-\Re(\lambda)N}}}.
\end{equation}
We next estimate the operator norm of $D\zerodde (\hat{x}+y) - D\zerodde (\hat{x})$. For a fixed sequence $v \in \mathcal{S}_2$, we compute that
\begin{align*}
(DF(\hat{x}+y)v - DF(\hat{x})v)_\beta = \begin{cases}
0 \qquad &\mbox{if } | \beta | \leq 1 \\
\alpha( \blowupscalardde (y)*v + \blowupscalardde (v)*y)_\beta \qquad &\mbox{if } \left| \beta \right| \geq 2
\end{cases}
\end{align*}
which implies that 
\begin{align*}
\norm{\left(D\zerodde (\hat{x}+y) - D\zerodde (\hat{x})\right)v} &\leq | \alpha | (\norm{\blowupscalardde} \norm{y} \norm{v} + \norm{y} \norm{\blowupscalardde}) \norm{v} \\
& = 2 \alpha \norm{r} \norm{y} \norm{v}. 
\end{align*}
Together with \eqref{eq:r_estimate} this yields that 
\[
\norm{D\zerodde (\hat{x}+y) - D\zerodde (\hat{x})} \leq 2 \alpha \norm{y}. 
\]
By combining the above inequality with the estimate \eqref{eq:estimate_A}, the statement of the lemma follows. 
\end{proof}

In the accompanying code on \github, we fix parameters $\alpha = \placealpha$, $n = \placen$, and let $\lambda$ be as in Lemma \ref{lem:eigenvalues_dde_explicit} and $\hat{x}$ be as in Remark \ref{rem:xhat}. We explicitly compute interval enclosures of the quantities  
$Y_0, Z_0, Z_1$ and $Z_2$ as defined in Lemma \ref{lem:Y0_dde}--\ref{lem:Z2_dde} 
We subsequently verify that that the polynomial 
\[
p(r) = Z_2 r^2 - (1 - Z_0 - Z_1) r + Y_0
\]
has a zero on the positive real line. From here, the Radii Polynomial Theorem (Theorem \ref{thm:radii_polynomial}) implies that 

\begin{proposition} \label{prop:r_dde}
Fix $\alpha = 2$ and let $\hat{x} \in \mathcal{S}_N$ be as in Remark \ref{rem:xhat}. Then for 
\[
r_{\text{\tt DDE}} = 1.956701163090857 \times 10^{-9}, 
\]
the map $F$ defined in \eqref{eq:zero_map_wright_dde} has a zero on the ball $B(\hat{x}, r_{\text{\tt DDE}})$. 
\end{proposition}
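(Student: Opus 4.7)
The plan is to apply the Radii Polynomial Theorem (Theorem \ref{thm:radii_polynomial} in Appendix \ref{appendix:contraction_mapping}) to the Newton-like operator $T(x) = x - A \zerodde(x)$, where $A$ is the operator defined in \eqref{eq:A}. The bounds $Y_0, Z_0, Z_1, Z_2$ required by the theorem have already been prepared abstractly in Lemmas \ref{lem:Y0_dde} through \ref{lem:Z2_dde}; what remains is a purely computational task of evaluating the explicit expressions in those lemmas with the fixed data $\alpha = \placealpha$, $n = \placen$, the finite sequence $\hat{x}$ from Remark \ref{rem:xhat}, and the eigenvalue enclosures $\lambda = (\lambda_+, \lambda_-)$ from Lemma \ref{lem:eigenvalues_dde_explicit}.

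Concretely, I would proceed as follows. First, using IntLab, compute rigorous interval enclosures of $\numder$ (a numerical approximation of $\pi_N D \zerodde(\hat{x}) \pi_N$) and of a numerical inverse $\numinv$; these are the finite-dimensional ingredients from which $A$ and $A^\dagger$ in \eqref{eq:A}--\eqref{eq:A_dagger} are assembled. Second, evaluate $Y_0$ via \eqref{eq:Y0 bound}: the first sum is a finite matrix-vector product, and the second sum is finite because $\hat{x} \in \pi_N \cS_2$ forces $(\hat{x} \ast \blowupscalardde(\hat{x}))_\beta$ to vanish for $\abs{\beta} > 2N$. Third, compute $Z_0 = \norm{I - \numinv \numder}$ as a finite $\ell^1$ operator norm on $\pi_N \cS_2$. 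Fourth, verify numerically that $N = \placeN$ satisfies the threshold \eqref{eq:N_treshhold_dde}, namely $\Re(\lambda) N - \alpha e^{-\Re(\lambda) N} > 0$, so that Lemma \ref{lem:delta_bound} applies with $M = N$, and then assemble $Z_1$ and $Z_2$ from the formulas in Lemmas \ref{lem:wright_Z1} and \ref{lem:Z2_dde}. Fifth, with the resulting interval enclosures, form the radii polynomial
\[
p(r) = Z_2 r^2 - (1 - Z_0 - Z_1) r + Y_0
\]
and check rigorously that it attains a negative value at some positive $r$; equivalently, that its discriminant is positive and $1 - Z_0 - Z_1 > 0$. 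The smallest positive root $r_\ast$ of $p$ then satisfies the hypothesis of the Radii Polynomial Theorem, and any $r_{\text{\tt DDE}} \geq r_\ast$ lying in the interval where $p(r) \leq 0$ can be taken as the contraction radius. In the \github{} code we pick $r_{\text{\tt DDE}} = 1.956701163090857 \times 10^{-9}$.

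The main obstacle is numerical rather than conceptual. The bound $Y_0$ is the key quantity controlling the final radius, and obtaining it small enough requires that $\hat{x}$ be computed to very high accuracy so that $\numinv \zerodde(\hat{x})$ is truly tiny in $\ell^1$; this in turn demands carrying the recursive construction of $\hat{x}$ in Subsection \ref{sec:approximatezero_wright_ps} in extended precision and propagating the interval arithmetic carefully through the Cauchy products in \eqref{eq:def convolution}. The ``tail'' pieces of $Y_0$ involving $\Delta(\dotp{\lambda, \beta})^{-1}$ for $N+1 \leq \abs{\beta} \leq 2N$ are also delicate because they require evaluating the transcendental characteristic function on interval enclosures of $\dotp{\lambda, \beta}$; here the fact that $\Re(\lambda) > 0$ makes $\abs{e^{-\dotp{\lambda, \beta}}}$ exponentially small in $\abs{\beta}$, which is what ultimately allows the bound to close. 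Once these interval-arithmetic evaluations are executed, the validation of $p(r_{\text{\tt DDE}}) \leq 0$ is immediate and the Radii Polynomial Theorem delivers the exact zero of $\zerodde$ in $B(\hat{x}, r_{\text{\tt DDE}})$.
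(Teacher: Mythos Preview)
Your proposal is correct and follows essentially the same approach as the paper: compute interval enclosures of the constants $Y_0, Z_0, Z_1, Z_2$ from Lemmas \ref{lem:Y0_dde}--\ref{lem:Z2_dde} using IntLab with the fixed data, verify that the radii polynomial $p(r)$ is negative at some positive $r$, and invoke Theorem \ref{thm:radii_polynomial}. The paper's own justification is exactly this (the paragraph immediately preceding the proposition), and your write-up simply fleshes out the computational steps in more detail.
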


\subsection{Conclusion}

Combining Propositions \ref{prop:r_ps} and \ref{prop:r_dde} with the triangle inequality yields that

\begin{theorem} \label{thm:distance_conclusion}
Let $\alpha = 2$ and $n = \placen$; let $\lambda_{n} = (\lambda_{n, +}, \lambda_{n, -})$ be as in Lemma \ref{lem:eigenvalues_ps_explicit} and let $\lambda = (\lambda_{+}, \lambda_{-})$ be as in Lemma \ref{lem:eigenvalues_dde_explicit}. 
Then the map $\zerops$ in \eqref{eq:zero_map_wright_ps} has zero $x_{\text{\tt PSA}}$ and the map $\zerodde$ in \eqref{eq:zero_map_wright_dde} has a zero $x_{\text{\tt DDE}}$ that satisfy
\[
\norm{x_{\text{\tt PSA}} - x_{\text{\tt DDE}} } \leq r_{\text{\tt DDE}} + r_{\text{\tt PSA}} \leq 1.956812219647396 \times 10^{-9}.
\]
\end{theorem}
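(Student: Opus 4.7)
The plan is to deduce the theorem as a direct corollary of the two main technical results established in Sections \ref{sec:validation_wright_ps} and \ref{sec:validation_wright_dde}, namely Proposition \ref{prop:r_ps} and Proposition \ref{prop:r_dde}. The crucial feature enabling the combination is that both propositions use the \emph{same} approximate sequence $\hat{x} \in \pi_N \cS_2$ (the one constructed in Remark \ref{rem:xhat}) as the centre of their respective contraction balls. Because we have validated one Newton-like operator for the pseudospectral ODE problem and a separate one for the DDE problem, but both around the common point $\hat{x}$, the radii $r_{\text{\tt PSA}}$ and $r_{\text{\tt DDE}}$ can be combined by the triangle inequality on the shared Banach space $(\ell^1, \norm{\cdot}_{\ell^1})$.

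First I would invoke Proposition \ref{prop:r_ps} to produce the zero $x_{\text{\tt PSA}}$ of $\zerops$ with $\norm{x_{\text{\tt PSA}} - \hat{x}} \leq r_{\text{\tt PSA}}$, and then Proposition \ref{prop:r_dde} to produce the zero $x_{\text{\tt DDE}}$ of $\zerodde$ with $\norm{x_{\text{\tt DDE}} - \hat{x}} \leq r_{\text{\tt DDE}}$. The triangle inequality then yields
\[
\norm{x_{\text{\tt PSA}} - x_{\text{\tt DDE}}} \leq \norm{x_{\text{\tt PSA}} - \hat{x}} + \norm{\hat{x} - x_{\text{\tt DDE}}} \leq r_{\text{\tt PSA}} + r_{\text{\tt DDE}}.
\]
To conclude, I would perform the sum $r_{\text{\tt PSA}} + r_{\text{\tt DDE}}$ in interval arithmetic (IntLab, with outward rounding) to guarantee that the reported numerical upper bound $1.956812219647396 \times 10^{-9}$ is rigorous.

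There is essentially no technical obstacle at this stage: the heavy lifting, namely constructing the Newton-like operators, establishing the $Y_0, Z_0, Z_1, Z_2$ bounds, and verifying that the radii polynomial has a positive root (Lemmas \ref{lem:Y0_ps}--\ref{lem:Z2_ps} and \ref{lem:Y0_dde}--\ref{lem:Z2_dde}), has already been done. The only subtlety worth flagging is the insistence on a \emph{common} $\hat{x}$, without which the bound in the triangle inequality would be contaminated by an unknown separation between two distinct numerical centres; this design choice was baked into the strategy from the introduction and carried consistently through Propositions \ref{prop:r_ps} and \ref{prop:r_dde}, so nothing further is needed.
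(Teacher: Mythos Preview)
Your proposal is correct and follows essentially the same approach as the paper: the paper's proof consists of a single sentence combining Propositions \ref{prop:r_ps} and \ref{prop:r_dde} via the triangle inequality, exactly as you describe. Your additional remarks about the common centre $\hat{x}$ and the interval-arithmetic sum are accurate elaborations of what the paper leaves implicit.
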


\appendix

\section{Constructive versions of the Contraction Mapping Theorem}
\label{appendix:contraction_mapping}

To carry out the computer assisted proofs in this work we apply two constructive variations of the Contraction Mapping Theorem which we state below. Both of these variations, tailored for computer assisted proofs, are used to prove the existence of zeros and obtain rigorous error estimates.

The first variation, which we will call the ``Radii Polynomial Theorem'', applies to functions on Banach spaces, and we will use it to construct the positive numbers $r_{\text{\tt PSA}}$ and $r_{\text{\tt DDE}}$ from Step 2 and 3 of our scheme.

\begin{theorem}[Radii Polynomial Theorem, cf. \cite{MR1639986,MR2338393}] \label{thm:radii_polynomial}
Let $X$ and $Y$ be Banach spaces and let $F : X \to Y$ be a Fr\'{e}chet differentiable function. Fix $\hat{x} \in X, A^\dagger \in \cL(X, Y), A \in \cL(Y, X)$ and assume $A$ is invertible. Suppose $Y_0, Z_0, Z_1$ are positive constants and $Z_2 : (0, \infty) \to (0, \infty)$ are given and satisfy the following estimates
\begin{equation} \label{eq:constants_banach_radii}
\begin{aligned}
\norm*{AF(\hat{x})} & \leq Y_0 \\
\norm*{I_X - A A^\dagger} & \leq Z_0 \\
\norm*{A\left(DF(\hat{x}) - A^\dagger\right)} & \leq Z_1 \\
\norm*{A\left(DF(x) - DF(\hat{x}\right))} & \leq Z_2(r)r \qquad \forall x \in \overbar{B_r(\hat{x})}, \forall r \in (0, \infty).
\end{aligned}
\end{equation}
Define the radii polynomial to be 
\[
p(r) := Z_2(r)r^2 - (1 - Z_0 - Z_1)r + Y_0.
\]
If there exists $r_0 > 0$ such that $p(r_0) < 0$, then there exists a unique root of $F$ in $B_{r_0}(\hat{x})$.
\end{theorem}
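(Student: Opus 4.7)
The plan is to apply the Banach Fixed Point Theorem to the Newton-like operator
$T(x) := x - A F(x)$
on the closed ball $\overline{B_{r_0}(\hat{x})}$. Since $A$ is assumed invertible, $x_\ast$ is a fixed point of $T$ if and only if $F(x_\ast) = 0$, so the existence (and uniqueness) of a zero of $F$ in $B_{r_0}(\hat{x})$ reduces to showing that $T$ is a well-defined strict contraction on this closed ball.

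First, I would bound $\|I - A\, DF(x)\|$ uniformly for $x \in \overline{B_{r_0}(\hat{x})}$ by inserting $A^\dagger$ and $DF(\hat{x})$ as intermediate approximations and applying the triangle inequality:
\[
\|I - A\, DF(x)\| \;\leq\; \|I - A A^\dagger\| + \|A(A^\dagger - DF(\hat{x}))\| + \|A(DF(\hat{x}) - DF(x))\|.
\]
By hypotheses \eqref{eq:constants_banach_radii}, this is bounded by $\kappa := Z_0 + Z_1 + Z_2(r_0)\, r_0$. Using the mean value identity $F(x) - F(y) = \int_0^1 DF(y + s(x-y))(x-y)\,ds$, which is valid for Fréchet differentiable maps and applies since $\overline{B_{r_0}(\hat{x})}$ is convex, I get
\[
\|T(x) - T(y)\| = \left\| \int_0^1 \bigl(I - A\, DF(y + s(x - y))\bigr)(x - y)\, ds \right\| \leq \kappa\, \|x - y\|
\]
for all $x, y \in \overline{B_{r_0}(\hat{x})}$.

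For the self-mapping property, I would estimate
\[
\|T(x) - \hat{x}\| \leq \|T(x) - T(\hat{x})\| + \|T(\hat{x}) - \hat{x}\| \leq \kappa\, \|x - \hat{x}\| + \|A F(\hat{x})\| \leq \kappa\, r_0 + Y_0.
\]
The hypothesis $p(r_0) < 0$ rearranges to $Z_2(r_0) r_0^2 + (Z_0 + Z_1) r_0 + Y_0 < r_0$, i.e., $\kappa r_0 + Y_0 < r_0$, which simultaneously forces $\kappa < 1$ (so $T$ is a strict contraction) and $\|T(x) - \hat{x}\| < r_0$ (so $T$ maps $\overline{B_{r_0}(\hat{x})}$ into itself). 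The Banach Fixed Point Theorem then produces a unique fixed point $x_\ast \in \overline{B_{r_0}(\hat{x})}$, which by the invertibility of $A$ is the unique zero of $F$ in that ball; the strict inequality in $p(r_0) < 0$ upgrades this to $x_\ast \in B_{r_0}(\hat{x})$.

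The main technical care is in the mean value step: $DF$ need not be continuous in the operator norm, so one should work with the Bochner-integral formulation above and then take the supremum of the bound on $\|I - A\, DF(\cdot)\|$ over the segment $[y, x] \subset \overline{B_{r_0}(\hat{x})}$, rather than invoking the classical real-variable MVT. Beyond this, the argument is essentially bookkeeping: the three pieces of \eqref{eq:constants_banach_radii} are engineered precisely so that each term of the radii polynomial corresponds to one of the required estimates, and $Z_2(r)$ is allowed to depend on $r$ to accommodate nonlinearities whose second-order behavior grows with the radius.
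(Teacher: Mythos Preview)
Your proof is correct and follows the standard contraction-mapping argument for this class of results. Note, however, that the paper does \emph{not} give its own proof of Theorem~\ref{thm:radii_polynomial}: it merely states the result and cites \cite{MR1639986,MR2338393}, so there is nothing to compare against beyond confirming that your argument matches the well-known approach from those references, which it does.

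One small technical remark: your concern about the mean value step is well placed, but you do not actually need the Bochner-integral identity. The scalar mean value \emph{inequality} for the Banach-valued map $t \mapsto T(y + t(x-y))$ on $[0,1]$ already gives $\|T(x)-T(y)\| \leq \sup_{t\in[0,1]}\|I - A\,DF(y+t(x-y))\|\cdot\|x-y\|$ under Fr\'echet differentiability alone, with no continuity of $DF$ required. This sidesteps the integrability issue you flag and keeps the hypotheses exactly as stated.
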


The second variation  (Theorem \ref{thm:contraction_mapping_scalar}) is essentially the Newton–Kantorovich theorem (c.f.~\cite{yamamoto86}) tailored to computer assisted proof. It applies to functions taking values in the complex numbers and in this work we use it to rigorously solve eigenvalue problems for both DDE and ODE.

\begin{theorem} \label{thm:contraction_mapping_scalar}
Let $U \subseteq \mathbb{C}$ be an open subset of the complex numbers and let $g: U \to U$ be a twice differentiable function on $U$. Assume that $g'$ is non-singular on $U$ and define $F: U \to U$ by the formula
\[
F(z) = z - \frac{g(z)}{g'(z)}.
\]
Fix a complex number $\hat{z}$ and a positive number $r^\ast > 0$ and define the quantities $Y, Z$ and $r_0$ as 
\begin{equation} \label{eq:constant_scalar_radii}
Y  := \abs*{F(\hat{z}) - \hat{z}}, \quad 
Z := \sup \left(\abs*{DF(\overbar{B_{r^*}(\hat{z})})}\right), \quad 
r_0 := \frac{Y}{1-Z}.
\end{equation}
If $Z < 1$ and $0 < r_0 < r^*$ both hold, then there exists a unique root of $g$ contained in $\overbar{B_{r_0}(\hat{z})}$. 
\end{theorem}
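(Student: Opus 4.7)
The plan is to apply the Banach fixed point theorem to the Newton operator $F$ restricted to the closed ball $\overbar{B_{r_0}(\hat{z})}$. The key preliminary observation is that, since $g'$ is non-singular on $U$, a point $z \in U$ satisfies $g(z) = 0$ if and only if $F(z) = z$. Hence the existence and uniqueness of a root of $g$ in $\overbar{B_{r_0}(\hat{z})}$ is equivalent to the existence and uniqueness of a fixed point of $F$ there. So the proof reduces to verifying the hypotheses of the contraction mapping principle for $F$ on $\overbar{B_{r_0}(\hat{z})}$, a closed (and hence complete) subset of $\mathbb{C}$.

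I would carry out two main steps. First, I show $F$ is Lipschitz with constant $Z$ on $\overbar{B_{r_0}(\hat{z})}$. Since $r_0 < r^*$, this ball is contained in $\overbar{B_{r^*}(\hat{z})}$, and the latter is a convex set on which $|DF|$ is bounded by $Z$ by the definition of $Z$ in \eqref{eq:constant_scalar_radii}. For any $z_1, z_2$ in this ball, parametrise the segment between them by $\gamma(t) = z_2 + t(z_1 - z_2)$, $t \in [0,1]$, and apply the fundamental theorem of calculus to obtain
\[
F(z_1) - F(z_2) = \int_0^1 DF(\gamma(t))(z_1 - z_2)\, dt,
\]
so that $|F(z_1) - F(z_2)| \leq Z |z_1 - z_2|$. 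Since $Z < 1$ by assumption, $F$ is a contraction.

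Second, I verify that $F$ maps $\overbar{B_{r_0}(\hat{z})}$ into itself. For any $z \in \overbar{B_{r_0}(\hat{z})}$, the triangle inequality combined with the Lipschitz estimate just proved gives
\[
|F(z) - \hat{z}| \leq |F(z) - F(\hat{z})| + |F(\hat{z}) - \hat{z}| \leq Z|z - \hat{z}| + Y \leq Z r_0 + Y.
\]
By the definition of $r_0 = Y/(1-Z)$ we have $Y = (1-Z)r_0$, so $Z r_0 + Y = r_0$, establishing the self-map property. The Banach fixed point theorem then produces a unique fixed point $z^\ast \in \overbar{B_{r_0}(\hat{z})}$, which is the unique root of $g$ in this ball by the equivalence noted above.

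This proof is essentially a routine verification with no serious obstacle; the only points requiring care are structural rather than technical. One must use the hypothesis $r_0 < r^\ast$ to ensure the $Z$ bound on $|DF|$ actually applies on $\overbar{B_{r_0}(\hat{z})}$, and one must implicitly assume $\overbar{B_{r^\ast}(\hat{z})} \subset U$ so that $F$ and $DF$ are defined there (which is needed for $Z$ to be meaningful in the first place). The assumption that $g$ is twice differentiable guarantees that $DF$ is well-defined and continuous, so that the supremum in the definition of $Z$ behaves as expected and the integral representation of $F(z_1) - F(z_2)$ is valid.
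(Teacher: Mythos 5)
Your proof is correct and follows essentially the same route as the paper: the contraction mapping principle applied to $F$ on $\overbar{B_{r_0}(\hat{z})}$, using the identity $Zr_0 + Y = r_0$ for the self-map property and $Z$ as the Lipschitz constant. Your explicit remarks on the fixed-point/root equivalence and on why $r_0 < r^*$ is needed are welcome clarifications of points the paper leaves implicit, but they do not constitute a different argument.
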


\begin{proof}
The proof is application of the Contraction Mapping Theorem. We first prove that 
\[F\left(\overbar{B_{r_0}(\hat{z})}\right) \subseteq \overbar{B_{r_0}(\hat{z})}.\]
To that end, we rewrite the definition of $r_0$ to obtain
\begin{equation} \label{eq:r0}
Z \cdot r_0 + Y = r_0. 
\end{equation}
We next pick $z \in \overbar{B_{r_0}(\hat{z})}$ and estimate that
\begin{align*}
\abs*{F(z) - \hat{z}} & \leq \abs*{F(z) - F(\hat{z})} + \abs*{F(\hat{z}) - \hat{z}}\\
& \leq \sup_{z \in \overbar{B_{r_0}(\hat{z})}} \setof*{\abs*{DF(z)}} \abs*{z - \hat{z}} + \abs*{F(\hat{z}) - \hat{z}} \\
& \leq Zr_0 + Y \\
& = r_0
\end{align*}
where in the last step we used \eqref{eq:r0}.  This proves that that $F\left(\overbar{B_{r_0}(\hat{z})}\right) \subseteq \overbar{B_{r_0}(\hat{z})}$. 

We next prove that $F$ is a uniform contraction on $F\left(\overbar{B_{r_0}(\hat{z})}\right)$ with contraction rate $Z < 1$. To that end, we let $z_1, z_2 \in \overbar{B_{r_0}(\hat{z})}$ and estimate
\[
\abs*{F(z_1) - F(z_2)} \leq \sup_{z \in \overbar{B_{r_0}(\hat{z})}} \setof*{\abs*{DF(z)}} \abs*{z_1 - z_2} \leq Z \abs*{z_1 - z_2}.
\] 
Hence $F$ is a uniform contraction on $F\left(\overbar{B_{r_0}(\hat{z})}\right)$ with contraction rate $Z < 1$ and the Contraction Mapping Theorem implies the result.
\end{proof}

\bibliographystyle{plain}
\bibliography{references}

\end{document}